\documentclass[12pt]{extarticle} 
\usepackage{amsfonts}   
\usepackage{amsmath}    
\usepackage{amsthm}     
\usepackage{amssymb}    
\usepackage{latexsym}   
\usepackage{bbm}        
\usepackage{tikz}       
\usepackage{graphics}   
\usepackage[a4paper, total={7in, 9in}]{geometry} 
\usepackage[margin=1cm]{caption} 
\usepackage{float}      
\usepackage{enumerate}  
\usepackage{algorithm}  
\usepackage{algorithmic}
\usepackage{color}      
\usepackage[noadjust]{cite} 
\usepackage{authblk}    
\usepackage{setspace}   

\newtheorem{theorem}{Theorem}[section]
\newtheorem{lemma}[theorem]{Lemma}

\newtheorem{proposition}[theorem]{Proposition}
\newtheorem{corollary}[theorem]{Corollary}

\theoremstyle{definition}
\newtheorem{definition}[theorem]{Definition}

\theoremstyle{remark}
\newtheorem{remark}[theorem]{Remark}

\newcommand{\cM}{\mathcal{M}}

\newcommand{\cE}{\mathcal{E}}

\newcommand{\cN}{\mathcal{N}}

\newcommand{\cB}{\mathcal{B}}

\newcommand{\cP}{\mathcal{P}}
\newcommand{\cC}{\mathcal{C}}

\newcommand{\bP}{\mathbb{P}}

\newcommand{\Z}{\mathbb{Z}}

\newcommand{\R}{\mathbb{R}}
\newcommand{\ep}{\epsilon }
\newcommand{\la}{\lambda }

\newcommand{\si}{\sigma }

\newcommand{\ga}{\gamma }
\newcommand{\Ga}{\Gamma }

\newcommand{\ones}{\mathbbm{1}}
\newcommand{\one}{\mathbf{1}}

\newcommand{\Mod}{\operatorname{Mod}}
\newcommand{\Dom}{\operatorname{Dom}}

\newcommand{\Adm}{\operatorname{Adm}}
\newcommand{\Ext}{\operatorname{Ext}}

\newcommand{\co}{\operatorname{conv}}
\newcommand{\cl}{\operatorname{cl}}
\newcommand{\bi}{\begin{itemize}}
\newcommand{\ei}{\end{itemize}}

\newcommand{\lbr}{\left\{ }
\newcommand{\rbr}{\right\} }

\newcommand{\cI}{\mathcal{I}}

\newcommand{\cU}{\mathcal{U}}
\newcommand{\supp}{\operatorname{supp}}
\usepackage{comment}
\numberwithin{equation}{section}
\usepackage[colorinlistoftodos,prependcaption,textsize=tiny]{todonotes}

\newcommand{\KL}{\text{KL}}

\newcommand{\MKL}{\operatorname{MKL}}
\newcommand{\dep}{\operatorname{dep}}

\newcommand{\Gahat}{\widehat{\Ga}}

\usepackage{hyperref}   
\usepackage{tikz}

\usepackage{placeins}

\begin{document}
\title{\bf Base Modulus for Matroid Truncation, Strength, and Fractional Arboricity\thanks{This material is based upon work supported by the National Science Foundation under Grant No. 2154032.}}

\author{Huy Truong\thanks{Corresponding author. E-mail: \texttt{huytruong@ksu.edu}} \quad Pietro Poggi-Corradini}

\affil[1]{\small Dept. of Mathematics, Kansas State University, Manhattan, KS 66506, USA.}

\date{}

\maketitle


\begin{abstract}

In \cite{truong2024modulus}, we provided results on the $p$-modulus of the family of all bases of matroids and showed that it recovers various concepts in matroid theory, including strength, fractional arboricity, and principal partitions. 
In particular, the unique optimal density  $\eta^*$ that arises for $p$-modulus, which we will refer to as universal density from now on, was shown to recover the concept of lexicographical base in polymatroids. Since truncation is a fundamental operation in matroid theory, it is
natural to ask how the universal density behaves under matroid truncation.
  In this paper, we first provide the universal density of every truncation of a given matroid; equivalently, we determine the principal partition for every matroid truncation. Next, we give a new characterization of the universal density using the Kullback--Leibler divergence. 
 Furthermore, we study the notion of strictly homogeneous matroids, generalizing the corresponding notion in graphs from \cite{albin2024minimizing}. We also offer several insights related to strength, fractional arboricity, and give the set of probability mass functions (pmfs) for bases that induce the universal density in a simple case. Finally, this paper also addresses two optimization problems for graph structures, particularly those involving edge-disjoint spanning trees and forest edge-coverings.

\end{abstract}

\noindent {\bf Keywords:} Truncation, matroid, universal base, strength, fractional arboricity.

\vspace{0.1in}

\noindent {\bf 2020 Mathematics Subject Classification:} 90C27 (Primary) ; 05B35 (Secondary)

\section{Introduction}
 The study of spanning trees has been a central topic in graph theory. A well-known result in this area is the spanning tree packing theorem, established independently by Nash-Williams \cite{edge-disjoint} and Tutte \cite{on}, which characterizes graphs with $k$ edge-disjoint spanning trees, for any integer $k > 0$. Another well-known theorem is the forest covering theorem of Nash-Williams \cite{nash1964decomposition}, which describes graphs with $k$ forests whose union equals the set of all edges. 
 Edmonds \cite{edmonds1965lehman} proved the corresponding theorems, establishing a deep connection between graph theory and matroid theory. 
 \vspace{\baselineskip}
 
 Let's recall some notations. Let $M$ be a finite matroid with no loops. Let $r_M$ (or simply $r$, when the context is clear) denote the rank function of $M$. The ground set of $M$ is denoted by $E(M)$, while $\cB(M)$ denotes the collection of bases of $M$. For a set $X$, we write $|X|$  for its cardinality, and if $Y$ is another set, then we write $X-Y$ for the relative complement of $Y$ in $X$.
 Let \( \tau(M) \) represent the maximum number of disjoint bases of \( M \) and let  \( a(M) \) be the minimum number of bases of \( M \) whose union equals \( E(M) \). The following theorem by Edmonds \cite{edmonds1965lehman} are well known.
 	\begin{theorem} \label{edmonds-SD}\cite{edmonds1965lehman} Let $M$ be a matroid with $r(M) > 0$. Each of the following statements holds.
 \begin{enumerate}
 	\item $\tau(M) \geq k$ if and only if $\forall X \subseteq E(M)$, $|E(M) - X| \geq k \left(r(M) - r(X)\right)$.
 	\item $a(M) \leq k$ if and only if $\forall X \subseteq E(M)$, $|X| \leq k \cdot r(X)$.
 \end{enumerate}
 \end{theorem}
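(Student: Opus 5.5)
Both parts follow from the matroid union theorem (Edmonds--Nash-Williams), which we take as given. For matroids $M_1,\dots,M_k$ on a common ground set $E$, the union $M_1\vee\cdots\vee M_k$ has rank function
\[
r_{\vee}(E) = \min_{X\subseteq E}\Big( |E-X| + \sum_{i=1}^k r_i(X)\Big).
\]
We apply this with $M_1=\dots=M_k=M$, so that $r_{\vee}(E)=\min_X\big(|E-X|+k\,r(X)\big)$. We also use the fact that a set is independent in the union exactly when it is a union of $k$ independent sets of $M$. The necessity directions are elementary counting, and each sufficiency direction comes from the min-max formula.

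For part 1, suppose first that $B_1,\dots,B_k$ are disjoint bases. For any $X$, each $B_i-X$ has at least $r(M)-r(X)$ elements, because $B_i\cap X$ is independent inside $X$. These sets are disjoint and lie in $E(M)-X$, which gives $|E(M)-X|\ge k(r(M)-r(X))$.

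Conversely, note that $\tau(M)\ge k$ holds if and only if the union matroid has rank $k\,r(M)$. Indeed, an independent set of size $k\,r(M)$ is a union of $k$ independent sets of total size $k\,r(M)$. Each has size at most $r(M)$, so each must be a base, and they must be disjoint. The hypothesis says $|E-X|+k\,r(X)\ge k\,r(M)$ for every $X$, so the min-max formula gives $r_\vee(E)\ge k\,r(M)$. The reverse inequality is trivial (take $X=E$).

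For part 2, suppose $E(M)$ is covered by $k$ bases. For any $X$, the intersections of these bases with $X$ cover $X$, and each is independent, so $|X|\le k\,r(X)$.

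Conversely, $E$ is a union of $k$ independent sets exactly when $r_\vee(E)=|E|$, that is, when $|E-X|+k\,r(X)\ge |E|$ for all $X$. That condition is precisely $|X|\le k\,r(X)$. This yields independent sets $I_1,\dots,I_k$ with union $E$. Extending each $I_i$ to a base $B_i$ keeps the union equal to $E$, so $a(M)\le k$; the hypothesis $r(M)>0$ ensures bases are nonempty and the statement is meaningful.

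The only substantial ingredient is the matroid union rank formula itself. If it may not be assumed, the main work is proving it, for example via the matroid intersection theorem applied to the direct sum $M\oplus\cdots\oplus M$ on $E\times[k]$ together with the partition matroid that selects at most one copy of each element. The rest of the argument consists of the routine reductions above.
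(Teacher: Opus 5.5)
Your proof is correct. The two necessity directions are the right counting arguments, and the two sufficiency directions are exactly the specializations of the matroid union rank formula with $M_1=\cdots=M_k=M$. In part 1 you rightly observe that a union-independent set of size $k\,r(M)$ forces $k$ pairwise disjoint bases, and in part 2 that $r_\vee(E)=|E|$ is equivalent to $|X|\le k\,r(X)$ for all $X$.

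The paper does not prove this statement; it quotes it from Edmonds \cite{edmonds1965lehman} as background, so there is no argument in the paper to compare against. Your reduction to the matroid union (Edmonds--Nash-Williams) theorem is the standard derivation. As you note, all the substantive content lives in that rank formula.
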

 For any subset $X \subseteq E(M)$ with $r(X) > 0$, the {\it density} of $X$ is defined as
 $\theta_M(X) = |X|/r(X).$
When the matroid $M$ is understood from the context, we often omit the subscript $M$. We also use $\theta(M)$ for $\theta(E(M))$. The {\it strength} $S(M)$ and the {\it fractional arboricity} $D(M)$ of $M$ are defined as follows:
 \begin{equation}
 S(M) = \min \left\{ \frac{|X|}{r(E) - r(E-X)} : X \subseteq E, r(E-X) < r(E) \right\},
 \end{equation}
 and
 \begin{equation}
 D(M) = \max \left\{ \theta(X) : X \subseteq E(M), r(X) > 0\right\}.
 \end{equation}
 Theorem \ref{edmonds-SD} indicates that
 $
 \tau(M) = \lfloor S(M) \rfloor,$ and $   a(M) = \lceil D(M) \rceil,
 $
 where $\lfloor\cdot \rfloor$ is the floor function and $\lceil \cdot \rceil$ is the ceiling function.
 The notions of strength and fractional arboricity are two well-known concepts in matroid theory. They are also related to the theory of modulus for bases of matroids. The theory of {\it discrete modulus} on graphs and matroids has been extensively studied in recent years \cite{modulus, pietrofairest,truong2024modulus,truong2025hypertree}. Discrete modulus  is a very flexible and general tool for measuring the richness of families of objects defined on a finite set. One main problem in this theory is the following: 
  
 \begin{equation}\label{eq:2norm}
 	\min \{ \Vert \eta \Vert_p: \eta \in \co(\cB(M))\}
 \end{equation}
 where $\Vert\cdot\Vert_{p}$ is the $p$-norm for $1\leq p \leq \infty$ and $\co(\cB(M))$ is the convex hull of the indicator functions of all bases in $\cB(M)$ of $M$.
 It was shown in \cite{truong2024modulus} that the optimization problem \ref{eq:2norm} obtains the same  unique optimal vector $\eta^*$ for all $1<p < \infty$. In this paper, we call this vector the {\it universal density} of $M$ and denote it by $\eta^*_M$. Later in this paper, we will see that $\eta^*_M$ is also an optimal vector for the case $p=\infty$, see Remark \ref{re:modinf}.
 Furthermore, it was shown in \cite{truong2024modulus} that \[S(M) = \frac{1}{\eta^*_{max}}, \quad D(M) = \frac{1}{ \eta^*_{min}},\] where $\eta^*_{max}$ and $\eta^*_{min}$ are the maximum and minimum values, respectively, of $\eta^*_M$.  Note that the universal density of a matroid determines its principal partition, see \cite{fujishige2009theory} for further details on the
 principal partition.
 \vspace{\baselineskip}
 
 The notion of strength and fractional arboricity can be specialized to graphs. The strength of graphs admits a natural generalization called the {\it $c$-order edge toughness} of a graph, which is defined as
 \begin{equation}
 	\tau_c(G)= \min\limits_{X \subset E: \ \omega(G-X)>c} \left\{ \frac{|X|}{\omega(G-X)-c}\right\},
 \end{equation}
 where $c$ is a natural number and $\omega(G-X) $ is the number of components of the graph $G-X$. The $1$-order edge toughness of a graph is the graph's strength.
 In \cite{onthe}, it was proved that $\tau_c(G) \geq k$ if and only if $G$ has $k$ edge-disjoint spanning 
 forests with exactly $c$ components, thus generalizing the result of Nash-Williams \cite{edge-disjoint} and Tutte \cite{on}. This result was also proved in \cite{the-higher-order} using the notion of matroid truncation. Let us recall this notion. Let $M$ be a matroid and let $t$ denote a fixed integer such that $1 \leq t \leq r(M)$. A $t$-independent set in $M$ is an independent set of size at most $t$. The {\it $ t$-truncation} of $M$ is a matroid on $E$ whose collection of independent sets is the set of all $t$-independent sets of $M$, and it is denoted by $M_t$. In the graph case, let $M(G)$ be the graphic matroid associated with a graph $G$. Then, a spanning forest with exactly $t$ edges of a graph $G$ is a base of the matroid truncation $M(G)_t$, for $1\leq t \leq |V|-1$. Hence, we can study the family of spanning forests with exactly $t$ edges by considering them as bases of $M(G)_t$. The first contribution of this paper is to show that the universal density $\eta^*_{M_t}$ of $M_t$ can be deduced from the universal density $\eta^*_{M}$ of $M$ for any natural number $t\leq r(M)$. Furthermore, we also introduce and study the following family of matroids generated by $M$: For $t=r(M),r(M)+1,\dots,|E|-1$, we define the {\it dual truncation} $M_t$ of $M$ as follows:
 
\begin{equation}
M_t:= ((M^*)_{|E|-t})^*,
\end{equation}where $M^*$ denotes the dual matroid of $M$, with the base family of $M_t$ is 
\begin{equation}
	\cB(M_t)= \left\{ X\subset E: |X|=t, X \supset B \text{ for some base } B \in \cB(M)\right\}.
\end{equation}
By an abuse of notation, we use $M_t$ to denote both the $t$-truncation 
(for $t \leq r(M)$) and the dual truncation (for $t > r(M)$), 
the intended meaning will be clear from the range of $t$.
Specifically, we will also provide the universal densities for the family of dual truncations. See Figure \ref{figure1} and Table \ref{table1} for an example of a graphic matroid $M$ with associated universal densities of $M_t$, for $t=1,\dots,|E|$.

\vspace{\baselineskip}

\begin{figure}[t]
	\centering
	\begin{tikzpicture}[scale=1.3, auto, node distance=3cm, thin]
		\begin{scope}[every node/.style={circle,draw=black,fill=black!100!,font=\sffamily\Large\bfseries}]
			\node (A) [scale=0.3] at (0,0) {};
			\node (B) [scale=0.3] at (1,0) {};
			\node (C) [scale=0.3] at (1.5,0.87) {};
			\node (D) [scale=0.3] at (1,1.73) {};
			\node (E) [scale=0.3] at (0,1.73) {};
			\node (F) [scale=0.3] at (-0.5,0.87) {};
			\node (G) [scale=0.3] at (0.25,0.43) {};
			\node (H) [scale=0.3] at (0.75,0.43) {};
			\node (I) [scale=0.3] at (1,0.87) {};
			\node (J) [scale=0.3] at (0.75,1.3) {};
			\node (K) [scale=0.3] at (0.25,1.3) {};
			\node (L) [scale=0.3] at (0,0.87) {};
			
			\node (A1) [scale=0.3] at (3,0) {};
			\node (B1) [scale=0.3] at (4,0) {};
			\node (C1) [scale=0.3] at (4.5,0.87) {};
			\node (D1) [scale=0.3] at (4,1.73) {};
			\node (E1) [scale=0.3] at (3,1.73) {};
			\node (F1) [scale=0.3] at (2.5,0.87) {};
			\node (G1) [scale=0.3] at (3.25,0.43) {};
			\node (H1) [scale=0.3] at (3.75,0.43) {};
			\node (I1) [scale=0.3] at (4,0.87) {};
			\node (J1) [scale=0.3] at (3.75,1.3) {};
			\node (K1) [scale=0.3] at (3.25,1.3) {};
			\node (L1) [scale=0.3] at (3,0.87) {};
			
			\node (A2) [scale=0.3] at (1.5,-2.6) {};
			\node (B2) [scale=0.3] at (2.5,-2.6) {};
			\node (C2) [scale=0.3] at (3,-1.73) {};
			\node (D2) [scale=0.3] at (2.5,-0.87) {};
			\node (E2) [scale=0.3] at (1.5,-0.87) {};
			\node (F2) [scale=0.3] at (1,-1.73) {};
			\node (G2) [scale=0.3] at (1.75,-2.17) {};
			\node (H2) [scale=0.3] at (2.25,-2.17) {};
			\node (I2) [scale=0.3] at (2.5,-1.73) {};
			\node (J2) [scale=0.3] at (2.25,-1.3) {};
			\node (K2) [scale=0.3] at (1.75,-1.3) {};
			\node (L2) [scale=0.3] at (1.5,-1.73) {};
		\end{scope}
		
		\begin{scope}[every edge/.style={draw=black,thin}]
			\draw (G) edge (H);
			\draw (H) edge (I);
			\draw (I) edge (J);
			\draw (J) edge (K);
			\draw (K) edge (L);
			\draw (L) edge (G);
			
			\draw (G1) edge (H1);
			\draw (H1) edge (I1);
			\draw (I1) edge (J1);
			\draw (J1) edge (K1);
			\draw (K1) edge (L1);
			\draw (L1) edge (G1);
			
			\draw (G2) edge (H2);
			\draw (H2) edge (I2);
			\draw (I2) edge (J2);
			\draw (J2) edge (K2);
			\draw (K2) edge (L2);
			\draw (L2) edge (G2);
			
			\draw (G) edge (I);
			\draw (G) edge (J);
			\draw (G) edge (K);
			\draw (H) edge (J);
			\draw (H) edge (K);
			\draw (H) edge (L);
			\draw (I) edge (K);
			\draw (I) edge (L);
			\draw (J) edge (L);
			
			\draw (G1) edge (I1);
			\draw (G1) edge (J1);
			\draw (G1) edge (K1);
			\draw (H1) edge (J1);
			\draw (H1) edge (K1);
			\draw (H1) edge (L1);
			\draw (I1) edge (K1);
			\draw (I1) edge (L1);
			\draw (J1) edge (L1);
			
			\draw (G2) edge (I2);
			\draw (G2) edge (J2);
			\draw (G2) edge (K2);
			\draw (H2) edge (J2);
			\draw (H2) edge (K2);
			\draw (H2) edge (L2);
			\draw (I2) edge (K2);
			\draw (I2) edge (L2);
			\draw (J2) edge (L2);
		\end{scope}
		
		\begin{scope}[every edge/.style={draw=black,dashed}]
			\draw (A) edge (B);
			\draw (B) edge (C);
			\draw (C) edge (D);
			\draw (D) edge (E);
			\draw (E) edge (F);
			\draw (F) edge (A);
			
			\draw (A1) edge (B1);
			\draw (B1) edge (C1);
			\draw (C1) edge (D1);
			\draw (D1) edge (E1);
			\draw (E1) edge (F1);
			\draw (F1) edge (A1);
			
			\draw (A2) edge (B2);
			\draw (B2) edge (C2);
			\draw (C2) edge (D2);
			\draw (D2) edge (E2);
			\draw (E2) edge (F2);
			\draw (F2) edge (A2);
			
			\draw (A) edge (G);
			\draw (B) edge (H);
			\draw (C) edge (I);
			\draw (D) edge (J);
			\draw (E) edge (K);
			\draw (F) edge (L);
			
			\draw (A1) edge (G1);
			\draw (B1) edge (H1);
			\draw (C1) edge (I1);
			\draw (D1) edge (J1);
			\draw (E1) edge (K1);
			\draw (F1) edge (L1);
			
			\draw (A2) edge (G2);
			\draw (B2) edge (H2);
			\draw (C2) edge (I2);
			\draw (D2) edge (J2);
			\draw (E2) edge (K2);
			\draw (F2) edge (L2);
		\end{scope}
		
		\begin{scope}[every edge/.style={draw=black,thick,dotted}]
			\draw (C) edge (F1);
			\draw (B) edge (E2);
			\draw (D2) edge (A1);
		\end{scope}
	\end{tikzpicture}
	\caption{ A graphic matroid with $36$ vertices and $84$ edges where edges are styled according to the universal density $\eta^*$: $45$ solid edges are present with $\eta^* = \frac{1}{3}$, $36$ dashed edges with $\eta^* = \frac{1}{2}$ and $3$ dotted edges with  $\eta^* = \frac{2}{3}$.}\label{figure1}
	
\end{figure}
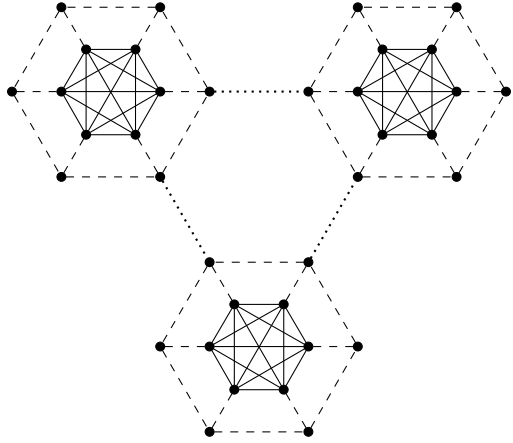

\begin{table}[t]
	\centering
	\caption{Values for the universal bases $\eta_t^*$ of matroid truncation $M_t$ for $t=1,\dots,84$, of the graphic matroid presented in Figure \ref{figure1}.}\label{table1}
	\resizebox{\textwidth}{!}{%
		\begin{tabular}{|c|c|c|c|c|c|c|}
			\hline
			Styles (\#)
			& $t = 1\to28$ 
			& $t = 29\to34$ 
			& $t = 35$ 
			& $t = 36\to42$ 
			& $t = 43\to55$ 
			& $t = 56\to84$ \\
			\hline
			Solid (45) 
			& $t/84$ 
			& $1/3$ 
			& $1/3$ 
			& $(t - 20)/45$ 
			& $(t - 2)/81$ 
			& $t/84$ \\
			\hline
			Dashed (36)
			& $t/84$
			& $(t - 15)/35$ 
			& $1/2$ 
			& $1/2$ 
			& $(t - 2)/81$
			& $t/84$ \\
			\hline
			Dotted (3)
			& $t/84$ 
			& $(t - 15)/35$ 
			& $2/3$ 
			& $2/3$ 
			& $2/3$ 
			& $t/84$ \\
			\hline
		\end{tabular}
	}
\end{table}

The next aspect of the modulus framework arises when the ground set $E$ is endowed with a set of weights. Let $M$ be a matroid with ground set $E$. Let $\si\in \R^E_{>0}$ be a set of weights attached to elements in $E$, we say that $M$ is weighted by $\si$. The universal density $\eta^*_M$ for the weighted matroid $M$ with weights $\si$ is defined as the unique optimal solution of the problem:
 \begin{equation}\label{eq:2norm-w}
 	\min \left\{ \sum\limits_{e\in E}\si^{-1}(e)\eta^2(e): \eta \in \co(\cB(M)). \right\}
 \end{equation}
  In this paper, we introduce a new characterization of the universal density for  weighted matroids. This new characterization arises from the following optimization problem motivated by the Kullback--Leibler divergence:
 \begin{equation}\label{eq:prodnorm}
 \underset{\eta \in \co(\cB(M))}{\text{minimize}}\quad-\sum_{e\in E}\si(e)\log\eta(e),
 \end{equation}
where we define $\log\eta(e) = -\infty$ when  $\eta(e) =0$, for $e\in E$. Specifically, we will show that the universal density of $M$ is the unique optimal solution for the problem (\ref{eq:prodnorm}), see Theorem \ref{thm:mainmkl}.
\vspace{\baselineskip}

The next topic we focus on is the notion of homogeneous matroids. A matroid $M$ with weights $\si$ is said to be {\it $\si$-homogeneous} if $\si^{-1}(e)\eta^*_M(e)$ is a constant for all $e \in E$. When $M$ is unweighted, such a matroid $M$ is also referred to as a {\it uniformly dense matroid} \cite{catlin1992,uniform-karel}. In \cite{catlin1992,truong2024modulus}, the authors present several characterizations of homogeneous matroids. One such characterization is that $M$ is homogeneous if and only if $\theta(M) = D(M)$. Independently, in \cite{ordering88}, uniformly dense matroids appeared in a conjecture on orderings of the bases of a matroid. The conjecture says that a matroid $M$ is uniformly dense if and only if there exists an ordering of $E$ such that all $r(M)$ cyclically consecutive elements form a base of $M$. Van den Heuvel
and Thomass\'e \cite{thomasse}
proved this conjecture for
matroids whose size and rank are coprime. In the graph case, the authors in \cite{albin2024minimizing} introduce the notion of {\it strictly homogeneous} graphs. One of its characterizations is that a biconnected graph is strictly homogeneous if it is homogeneous and it contains no strict subgraph with the same graph density.
In Section \ref{strictly-hom} of this paper, we generalize this notion to the case of matroids and also give several equivalent characterizations.\\

The next contribution of this paper is to extend the study of the base modulus from \cite{truong2024modulus}, providing several related results. In Section \ref{sec:sd}, we give some properties related to the strength and the fractional arboricity in weighted matroids. In Section \ref{sec:dualmatroid}, we extend a result in \cite{truong2024modulus} of the fundamental relationship between the universal base of a matroid and its dual to the weighted case. Next, the set of pmfs $\mu \in \R^{\cB}_{\geq 0}$ whose marginals equal to $\eta^*_M$, i.e.,
\[\sum\limits_{B\in \cB} \mu(B)\ones_B = \eta^*,\]where $\ones_B$ is the indicator function of a base $B$, plays an important role in the study of base modulus and strictly homogeneous matroids. The problem of finding all such pmfs has not been fully investigated. 
In Section \ref{sec:pmf}, we describe such pmfs in a simple case where $M$ is the union of two matroids of rank 1. This provides a starting point for investigating the set of all such pmfs in the general case.

\vspace{\baselineskip}

Finally, this paper also addresses optimization problems for graph structures, particularly those involving edge-disjoint spanning trees.
For a graph $G$, the problem of determining the minimum number of edges to be added so that the resulting graph has $k$ edge-disjoint spanning trees was explored by Haas \cite{haas2002} and Liu et al. \cite{liu2009reinforcing}. The analogous problem for matroids was studied in \cite{lai2010reinforcing}, in which the minimum number of elements to be added was expressed in terms of certain invariants of the matroid. For matroids with $k$ disjoint bases, the set $E_k(M)$ of elements such that the removal of any $e \in E_k(M)$ leaves the matroid with $k$ disjoint bases has been characterized using the matroid strength by Li et al. \cite{li2012characterization}.  
Recall that, for a matroid $M$, $a(M)$ is the minimum number of bases whose union equals $E(M)$. In analogy, for a graph $G$, the arboricity $a(G)$ is the minimum number of spanning trees whose union equals $E(G)$. Despite the advances mentioned above, the following two related problems remain less explored: Given a matroid $M$ with $a(M) > h$, how many elements should be deleted from $M$ so that the resulting matroid $M'$ has $a(M') \leq h$? Given a graph $G$ with $a(G) \leq h$, what kind of new edges $e$ can be added to $G$ so that $a(G+e) \leq
h$? We answer the first question in Section \ref{sec:removing} and answer the second question in Section \ref{addable}.

\vspace{\baselineskip}

Here is a summary of our results in this paper:
\bi

\item In Section \ref{sec:truncation}, we provide the universal densities for all matroid truncations.
\item  In Section  \ref{sec:kl}, we provide a new characterization of the universal density of weighted matroids, arising from the Kullback--Leibler divergence.

\item In Section \ref{strictly-hom}, we introduce strictly $\sigma$-homogeneous matroids and give an equivalent characterization.

\item In Section \ref{sec:sd}, we provide several monotonicity and comparison lemmas related to $S_{\si}(M)$ and $D_{\si}(M)$ for a weighted matroid $M$ with weights $\si \in \R^E_{\geq0}$.

\item In Section \ref{sec:dualmatroid}, we extend the relationship between the universal density of a matroid and its dual.

\item In Section \ref{sec:pmf}, we describe all pmfs that induce the universal density in a simple case where $M$ is the union of two matroids of rank $1$. 

\item In Section \ref{sec:removing}, we answer the question: Given a matroid $M$ with $a(M) > h$, how many elements should be deleted from $M$ so that the resulting matroid $M'$ has $a(M') \leq h$? 

\item In Section \ref{addable}, we answer the question:  Given a graph $G$ with $a(M) \leq h$, what kind of new edges $e$ can be added to $G$ so that $a(G+e) \leq
h$?
\ei

\section{Preliminaries: Modulus for bases of matroids}\label{sec:mod-base}
For background on matroids and the discrete modulus, we refer the reader to Appendix~\ref{subsec:mat} and Appendix~\ref{subsec:mod}, respectively.

\vspace{\baselineskip}

Let $M=(\cI,E)$ be a matroid with given weights 
$\sigma \in \mathbb{R}^E_{>0}$ assigned to
elements in $E$. Let $\cB=\cB(M)$ be the family of bases of $M$ where the usage vectors are indicator functions. Let $\mathcal{N}$ be the matrix whose rows are the indicator vectors of the bases. In \cite{reinforcement}, the authors provided the weighted version for modulus of the base family of matroids.  We now recall some notations.
Given weights $\si$, for any subset $X \subseteq E(M)$, denote $\si(X)=\sum\limits_{e\in X}\si(e).$ For any subset $X \subseteq E(M)$ with $r(X) > 0$, the {\it weighted density} of $X$ is defined as
$\theta_{\si}(X) = \si(X)/r(X).$ We also use $\theta_{\si}(M)$ for $\theta_{\si}(E)$. Next, we define the {\it weighted strength} $S_{\si}(M)$ of $M$ as
\begin{equation}\label{eq:weighted-strength-problem}
	S_{\si}(M) := \min \left\{ \frac{\si(X)}{r(E) - r(E-X)} : X \subseteq E, r(E) > r(E-X) \right\},
\end{equation}
and the {\it weighted fractional arboricity} $D_{\si}(M)$ of $M$:
\begin{equation}\label{eq:weighted-d-problem}
	D_\si(M) := \max \left\{ \frac{\si(X)}{r(X)} : X \subseteq E, r(X)>0 \right\}.
\end{equation}

Let $\eta^*:=\eta^*_M$ be the universal density of $M$ with weights $\si$. We denote $\si^{-1}\eta^*$ the vector of $\{\si^{-1}(e)\eta^*(e): \ e \in E \}.$
Let $(\si^{-1}\eta^*)_{min}:=s_1<s_2<\dots<s_k=: (\si^{-1}\eta^*)_{max}$ be the distinct values of $\si^{-1}\eta^*$. For $i=1,\dots,k$, denote 
\begin{equation}\label{eq:Ai}
	A_i := \{e \in E : \si^{-1}(e)\eta^*(e) = s_i\}
\end{equation} and 
\begin{equation}\label{eq:Ei}
	E_i:= \{e \in E: \si^{-1}(e)\eta^*(e)\geq s_i\}.
\end{equation}
 Then, we have $E_1 = E$, and for 
$j = 2, \dots, k$, 
\[
E_i = E - \bigcup\limits_{j=1}^{i-1} A_j.
\]
In \cite{truong2024modulus}, it was shown that the matroid contraction $M/(E-E_i)$ satisfies:
\begin{equation}\label{eq:pre}
	S_{\si}(M/(E-E_i))= \frac{1}{s_k}, \quad D_{\si}(M/(E-E_i))= \frac{1}{s_i}.
\end{equation}
 Also, $E_k$ is the unique maximal optimal set for  $S_{\si}(M/(E-E_i))$, and $A_i$ is the unique maximal optimal set for $D_{\si}(M/(E-E_i))$. Moreover, the universal density of $ M/(E-E_i)$ is the restriction of $\eta^*$ to $E_i$. In particular, $E_k$ is the unique maximal optimal set for  $S_{\si}(M)$, and $A_1$ is the unique maximal optimal set for $D_{\si}(M)$. From now on, we will call $A_1$ the {\it core} of $M$. Note that the sets $A_i$ are the blocks of the principal partition of
 the matroid; see \cite{fujishige2009theory} for further details on the
 principal partition.

\begin{remark}\label{re:def-hom}
	We recall that a matroid $M$ is $\si$-homogeneous if the vector $\si^{-1}\eta^*_M$ is a constant vector. It was shown in \cite{truong2024modulus} that $M$ is $\si$-homogeneous if and only if there is a pmf $\mu \in \cP(\cB)$ such that $\cN^T\mu$ is parallel with $\si$. It was also shown that 
	\begin{align}\label{eq:hom-char}
		M \text{ is $\si$-homogeneous} \Leftrightarrow S_\si(M)=\theta_\si(M) \Leftrightarrow \theta_\si(M)=D_\si(M) \Leftrightarrow S_\si(M)=D_\si(M).
	\end{align}
\end{remark}  

\begin{remark}\label{re:modinf}
	It was also shown in \cite{truong2024modulus} that $S_\si(M) = \Mod_{1,\si}(\cB)$.
	 Combining this with the result $S_\si(M) = 1/ s_k$ from (\ref{eq:pre}) with $ i=1$, we obtain that
	\begin{equation}\label{eq:inf}
		 s_k=\frac{1}{S_\si(M)} = \frac{1}{\Mod_{1,\si}(\cB)}=\Mod_{\infty,\si^{-1}}(\widehat{\cB})= \min\{ \max\limits_{e\in E} \{\si^{-1} (e)\eta(e)\} :\eta \in \co(\cB)\},
	\end{equation}
	where the third equality follows from Fulkerson duality for modulus in (\ref{eq:dual-infty}) and the last equality follows from (\ref{eq:adm-dom}).
 In particular, $\eta^*_M$ is also an optimal solution for the minimization problem in (\ref{eq:inf}).
\end{remark}

\vspace{\baselineskip}

Next, we generalize the notion of strictly homogeneous graphs to the context of matroids and provide some results from \cite{albin2024minimizing} in this context.
With given weights $\la\in\R_{>0}^E$ on $E$. Let $\mu_{\la}$ be a pmf in $\cP(\cB)$ such that, for every base $B\in \cB$,  the probability density $\mu_{\la}(B)$ is proportional to
\begin{equation}\label{eq:mu-la}
	\la[B] := \prod\limits_{e\in B}\la(e).
\end{equation}  We denote $\eta_{\la}$ the element usage induced by $\mu_\la$, meaning that
\begin{equation}\label{eq:peredgeeffres}
	\eta_{\la}(e) := \cN^T{\mu_\la}.
\end{equation}
Then, we introduce the notion of {\it strictly homogeneous matroids} as follows. 
\begin{definition}\label{def:s-h}
	Let $M=(E,\cI) $ be a matroid with given weights $\si \in\R^E_{>0}$. A matroid $M$ is said to be {\it strictly $\si$-homogeneous} if there exists weights $\la\in\R_{>0}^E$ such that $\eta_{\la}$ is parallel with $\si$, in other words,
	\[\eta_{\la}(e) = k\si(e), \text{ for some } k>0.\]
\end{definition}
To state some results from \cite{albin2024minimizing}, we need several notations. For any vector $\beta \in \R^E_{\geq 0}$, the set of $\beta$-induced pmf is denoted by  $\cU_\beta \subset \cP(\cB)$:
\begin{equation} \label{eq:cone_definition}
	\cU_\beta := \left\{ \mu \in \cP(\cB) : \cN^T\mu = \beta	 \right\}.
\end{equation}
Assuming the weights $\beta$ are chosen so that $\mathcal{U}_\beta$ is nontrivial, i.e, $\beta \in \co(\cB)$. We denote $\cB_{\beta}$ the set of all bases $B$ that belong to the support set of some pmf $\mu \in \cU_\beta $:
\begin{equation}
	\cB_{\beta} := \bigcup_{\mu\in \mathcal{U}_\beta}\supp \mu.
\end{equation}
The following problem of minimizing the Shannon entropy over $\mathcal{U}_\beta$ was studied in \cite{albin2024minimizing}:
\begin{equation}\label{eq:max-entropy}
	\underset{\mu\in\mathcal{U}_\beta}{\text{maximize}}\quad -\sum_{\gamma\in\Gamma}\mu(\gamma)\log\mu(\gamma) =: H(\mu),
\end{equation}
(here we interpret $ \mu(\gamma) \log \mu(\gamma) = 0 $ if $ \mu(\gamma) = 0 $).
It was shown in \cite[Lemma 3.8]{albin2024minimizing} that the optimal pmf $\mu^*$ of problem (\ref{eq:max-entropy}) is unique.  We call $\mu^*$ the unique maximum-entropy pmf in $\mathcal{U}_\beta$.

\vspace{\baselineskip}

Another optimization problem from \cite{albin2024minimizing} is described as follows.
Let $ M = (E, \cI) $ be a matroid with given weights $ \sigma \in \mathbb{R}^E_{>0} $.
Following \cite{albin2024minimizing}, we consider the problem:
\begin{equation}\label{eq:min-det}
	\begin{split}
		\underset{\la>0}{\text{minimize}}\quad&\sum_{B\in\cB}\la[B]\\
		\text{subject to}\quad&\prod_{e\in E}\la(e)^{\si(e)} = 1.
	\end{split}
\end{equation}
Given weights $\si \in \R_{>0}^{E}$, denote a dilation $	\beta_{\si}$ of $\si$ as follows:
\begin{equation}\label{eq:beta-si}
	\beta_{\si}(e) := \frac{\si(e)}{\si(E)}r(E) \quad\text{ for } e \in E.
\end{equation}
We generalize a result from \cite{albin2024minimizing} that gives a relation between the problem \ref{eq:max-entropy} and the problem \ref{eq:min-det}.

\begin{theorem}
	\label{thm:main-mindet}
	Let $M=(E,\cI) $ be a matroid with given weights $\si \in\R^E_{>0}$. Let $\beta_{\si}$ be defined as in (\ref{eq:beta-si}).
	If $\mathcal{U}_{\beta_{\si}}$ is trivial, then the infimum in~\eqref{eq:min-det} is zero.  Otherwise, this infimum is equal to $\exp(H(\mu^*))$, where $\mu^*$ is the unique maximum-entropy pmf in $\mathcal{U}_{\beta_{\si}}$.  Let $\{\la_k\}$ be a minimizing sequence of~\eqref{eq:min-det}, and let $\mu_{\la_k}$ be the corresponding pmfs.  Then $\mu_{\la_k}\to\mu^*$. Moreover, a set of weights $\la\in\mathbb{R}^E_{>0}$ minimizes the problem~\eqref{eq:min-det} if and only if $\mu_{\la}=\mu^*$.
\end{theorem}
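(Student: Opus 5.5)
The plan is a Gibbs variational (entropy/log-sum) argument that turns \eqref{eq:min-det} into a bound by $H(\mu)$ over pmfs with marginals $\beta_\si$. The key identity is the following. For any $\la>0$ with $\prod_e \la(e)^{\si(e)}=1$ and any pmf $\mu\in\cP(\cB)$ whose support lies in $\supp(\mu_\la)=\cB$, we have
\[
\log\sum_{B}\la[B] = \log\sum_B \mu(B)\frac{\la[B]}{\mu(B)} \;\geq\; H(\mu)+\sum_B\mu(B)\log\la[B]
\]
by Jensen's inequality. Moreover,
\[
\sum_B\mu(B)\log\la[B]=\sum_e (\cN^T\mu)(e)\log\la(e).
\]
If $\mu\in\cU_{\beta_\si}$, then $\cN^T\mu=\beta_\si=\frac{r(E)}{\si(E)}\si$, so the last sum equals $\frac{r(E)}{\si(E)}\sum_e\si(e)\log\la(e)=0$ by the constraint. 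Hence $\sum_B\la[B]\geq \exp(H(\mu))$ for every $\mu\in\cU_{\beta_\si}$, and in particular $\geq\exp(H(\mu^*))$. Equality in Jensen holds iff $\la[B]/\mu(B)$ is constant on $\supp\mu$ and $\supp \mu=\cB$, i.e. iff $\mu=\mu_\la$. This already gives one direction of the characterization of minimizers: if $\mu_\la=\mu^*$ (so $\mu_\la\in \cU_{\beta_\si}$), then $\sum_B\la[B]=\exp(H(\mu^*))$ is the minimum.

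The reverse inequality, and the trivial case, come from the dual/convex reformulation. Substitute $x=\log\la\in\R^E$. The problem becomes minimizing the convex function $F(x)=\log\sum_B e^{\langle \ones_B,x\rangle}$ on the hyperplane $\langle\si,x\rangle=0$. The gradient is $\nabla F(x)=\cN^T\mu_{\la}$. By Fenchel duality (the conjugate of log-sum-exp is negative entropy restricted to the simplex composed with $\cN^T$), we get
\[
\inf_{\langle\si,x\rangle=0}F(x)=\sup\{H(\mu):\mu\in\cP(\cB),\ \cN^T\mu\in \R\si\}.
\]
Since every $\cN^T\mu$ sums to $r(E)$, the condition $\cN^T\mu\in\R\si$ forces $\cN^T\mu=\beta_\si$. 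The supremum of an empty set is $-\infty$, which gives infimum $0$ when $\cU_{\beta_\si}$ is trivial. For a direct proof of the trivial case, I would separate $\beta_\si$ from the polytope $\co(\cB)$ by a vector $x$ with $\langle\si,x\rangle=0$ and $\langle\ones_B,x\rangle<0$ for all $B$. This is possible because every point of the polytope has total mass $r(E)$, so one can separate within the hyperplane $\{\sum_e\eta(e)=r(E)\}$ and then adjust $x$ by a multiple of $\one$ orthogonally to $\si$. Then $F(tx)\to-\infty$ as $t\to\infty$. Strong duality in the nontrivial case can be justified since $\mu^*$ is feasible and the constraint is affine. Alternatively, and more concretely, I would reduce to the face of $\co(\cB)$ containing $\beta_\si$ in its relative interior, where attainment or near-attainment holds; this is the step I expect to be the main obstacle. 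The bases outside $\cB_{\beta_\si}$ must be driven to weight zero along a minimizing sequence. The plan is to use a vector $y$ with $\langle\si,y\rangle=0$, $\langle \ones_B,y\rangle=0$ on $\cB_{\beta_\si}$ and $<0$ off it (from the face-defining inequality), and combine it with a near-minimizer in the relatively open face.

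For convergence of minimizing sequences, take $\la_k$ with $\sum_B\la_k[B]\to\exp(H(\mu^*))$. From the Jensen step with $\mu=\mu^*$, the gap is exactly a KL divergence:
\[
\log\sum_B\la_k[B]-H(\mu^*)=\KL(\mu^*\,\|\,\mu_{\la_k})\geq 0,
\]
and this tends to $0$. Pinsker's inequality then gives $\mu_{\la_k}\to\mu^*$ in total variation. Finally, if $\la$ is a minimizer, then $\KL(\mu^*\|\mu_\la)=0$, so $\mu_\la=\mu^*$, completing the "only if" direction of the characterization of minimizers.
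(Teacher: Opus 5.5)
Your proof is correct. The paper does not write out its own argument; it only asserts that the spanning-tree proof of \cite{albin2024minimizing} carries over verbatim. Your proposal is a self-contained version of that transfer. It also shows why the transfer is legitimate: it uses only that the rows of $\cN$ are $0/1$ vectors with constant sum $r(E)$, and that $\beta_\si$ is the unique multiple of $\si$ with total mass $r(E)$.

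The most efficient ingredient is your exact identity
\[
\log\sum_{B\in\cB}\la[B]-H(\mu^*)=\KL(\mu^*\,\|\,\mu_\la)\qquad\text{whenever } \prod_{e\in E}\la(e)^{\si(e)}=1,
\]
which follows from $\sum_{B}\mu^*(B)\log\la[B]=\sum_{e}\beta_\si(e)\log\la(e)=0$. In one stroke it gives the lower bound, the convergence $\mu_{\la_k}\to\mu^*$ via Pinsker, and both directions of the characterization of minimizers. So the theorem reduces to the value identity $\inf=\exp(H(\mu^*))$ together with your separation argument for the trivial case, which is correct as written.

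The one sentence to fix is your justification of strong duality. It does not follow from ``$\mu^*$ is feasible and the constraint is affine''. A Slater-type condition for the entropy problem needs a feasible pmf in the relative interior of $\cP(\cB)$, i.e.\ one with full support. Such a pmf exists if and only if $\cB_{\beta_\si}=\cB$, which is exactly when $M$ is strictly $\si$-homogeneous. Slater would also produce a dual optimum, i.e.\ a minimizer of \eqref{eq:min-det}, and that fails in every other case.

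The correct reason is the Fenchel--Rockafellar qualification on your $x$-side. You need some $x\in\si^\perp$ with $\cN x$ in the relative interior of the domain of log-sum-exp. That domain is all of $\R^{\cB}$, so $x=0$ works. This gives equality of the two values, with attainment on the entropy side only, which is precisely the picture in the theorem.

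Your face-reduction alternative is also sound. Slater does hold for the entropy problem posed on $\cP(\cB_{\beta_\si})$, because averaging finitely many elements of $\cU_{\beta_\si}$ gives full support on $\cB_{\beta_\si}$. Its KKT conditions then give $\la>0$ with $\la[B]\propto\mu^*(B)$ on $\cB_{\beta_\si}$. After rescaling $\la$ to meet the constraint, the same identity gives $\sum_{B\in\cB_{\beta_\si}}\la[B]=\exp(H(\mu^*))$. The weights $e\mapsto\la(e)e^{ty(e)}$, with your face vector $y$, then form a minimizing sequence as $t\to\infty$.
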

The proof of Theorem \ref{thm:main-mindet} is the same as the proof in the case of spanning trees provided in \cite{albin2024minimizing}.
\begin{remark}\label{rem:min-strict}
	If the problem~\eqref{eq:min-det} has a minimizer, by Theorem \ref{thm:main-mindet}, there exists a set of weights $\la\in\mathbb{R}^E_{>0}$ such that $\mu_{\la}=\mu^*$, where $\mu^*$ is the unique maximum-entropy pmf in $\mathcal{U}_{\beta_{\si}}$. It follows that $\eta_{\la} = \beta_{\si}$, in other words, $M$ is strictly $\si$-homogeneous.
\end{remark}
In \cite{albin2024minimizing} , the authors gave a characterization for strictly homogeneous graphs, this result is stated in the context of matroids as follows. 
\begin{theorem}
	\label{thm:supp}
	Let $M=(E,\cI) $ be a matroid. Let $\cB$ be the base family of $M$ and let $\beta \in \co(\cB)$. There exists a set of weights $\la\in\R_{>0}^E$ such that $\eta_{\la}=\beta$ if and only if $\cB_{\beta}=\cB$. The set of such $\beta$ is denoted by $\co^+(\cB)$.
\end{theorem}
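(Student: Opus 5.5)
The plan is to prove the two directions separately. Throughout, the key object is the family of product-form pmfs $\mu_\la(B)\propto\la[B]$ from (\ref{eq:mu-la}), viewed as an exponential family on $\cB$.

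\emph{Forward direction.} Suppose $\la\in\R^E_{>0}$ and $\eta_\la=\beta$. Then $\mu_\la\in\cU_\beta$. Since every entry of $\la$ is positive, $\mu_\la(B)>0$ for every $B\in\cB$, so $\supp\mu_\la=\cB$. This gives $\cB\subseteq\cB_\beta\subseteq\cB$, hence $\cB_\beta=\cB$.

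\emph{Reverse direction.} Suppose $\cB_\beta=\cB$. Each $B\in\cB$ lies in the support of some $\mu_B\in\cU_\beta$. Averaging $\mu_B$ over all $B\in\cB$ gives a pmf $\bar\mu\in\cU_\beta$, because $\cU_\beta$ is convex, and $\bar\mu$ has full support. So $\cU_\beta$ contains a strictly positive pmf, i.e.\ a point in the relative interior of this polytope.

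Next take the unique maximum-entropy pmf $\mu^*\in\cU_\beta$, which exists by \cite[Lemma 3.8]{albin2024minimizing}. I claim $\mu^*$ has full support. Consider the segment $\mu_t=(1-t)\mu^*+t\bar\mu$. If $\mu^*(B)=0$ for some $B$, then the one-sided derivative of $H(\mu_t)$ at $t=0^+$ is $+\infty$, since $-x\log x$ has infinite slope at $0$. This contradicts the optimality of $\mu^*$, so $\mu^*>0$ everywhere.

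With $\mu^*$ in the relative interior, the only active constraints are the linear equalities $\cN^T\mu=\beta$ together with $\sum_B\mu(B)=1$. The KKT/Lagrange conditions then give multipliers $y\in\R^E$ and $c\in\R$ such that, for every $B\in\cB$,
\[-\log\mu^*(B)-1=\sum_{e\in B}y(e)+c.\]
Hence $\mu^*(B)=C\prod_{e\in B}e^{-y(e)}$. Setting $\la(e)=e^{-y(e)}>0$ gives $\mu^*=\mu_\la$, and therefore $\eta_\la=\cN^T\mu^*=\beta$.

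The main obstacle is the boundary step showing that $\mu^*$ has full support. It needs a careful directional-derivative argument for the entropy along the segment to $\bar\mu$. Once full support is established, the Lagrange-multiplier step is routine, because all the constraints are affine. Alternatively, one could invoke Theorem~\ref{thm:main-mindet}, but the direct entropy argument above is self-contained.
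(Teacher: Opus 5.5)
Your proof is correct, and the step you flag as the main obstacle works as written. Along $\mu_t=(1-t)\mu^*+t\bar\mu$ the total change in entropy is $-t\,\bar\mu(Z)\log t+O(t)$, where $Z=\{B:\mu^*(B)=0\}$, and this is positive for small $t>0$. Once $\mu^*>0$, the first-order condition on the affine slice $\{\cN^T\mu=\beta,\ \sum_B\mu(B)=1\}$ gives the product form, and no constraint qualification is needed because the constraints are affine.

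Your route differs from the paper's. The paper does not prove this statement itself; it imports it from \cite{albin2024minimizing}, where it belongs to the analysis of the dual problem \eqref{eq:min-det}. That is also how the paper uses it later: to obtain a minimizer $\la^*$ of \eqref{eq:min-det} (with $\si:=\beta$) satisfying $\mu_{\la^*}=\mu^*_\beta$, with Theorem~\ref{thm:main-mindet} linking minimizers of \eqref{eq:min-det} to the max-entropy pmf. In that framework the crux is existence of a minimizer, which requires controlling minimizing sequences that degenerate on some set $E_0$, as in the argument of Section~\ref{strictly-hom}.

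You stay entirely on the primal, max-entropy side. You prove the maximal-support property of $\mu^*$ directly and read off $\la=e^{-y}$ from the Lagrange multipliers. So you need neither \eqref{eq:min-det} nor any analysis of minimizing sequences, and you get $\mu_\la=\mu^*$ for free.

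What the dual route buys is the variational characterization of $\la$ as a minimizer of \eqref{eq:min-det} with optimal value $\exp(H(\mu^*))$. You can recover this from your construction. Rescale $\la$ so that $\prod_e\la(e)^{\beta(e)}=1$. Then $\sum_B\la[B]=\exp(H(\mu_\la))=\exp(H(\mu^*))$ by the computation in the paper's uniqueness theorem, and this matches the infimum in Theorem~\ref{thm:main-mindet}.
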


\section{Truncation of matroids and modulus}\label{sec:truncation}

\subsection{Homogeneous matroids and balancity}
Let $M=(E,\cI)$ be a matroid with the base family $\cB$.
Let $\mathcal{N}$ be the usage matrix whose rows are the indicator vectors of the bases.
First, we recall the notion of \emph{matroid truncation}. Let $t$ denote a fixed
integer such that $1 \leq t \leq r(M)$. The \emph{$t$-truncation} of $M$,
denoted by $M_t$, is a matroid on $E$ whose collection of independent sets is
the set of all independent sets of $M$ with size at most $t$.
For $X \subset E$, the rank of $M_t$ is
\[
r_{M_t}(X)=\min\{t, r_M(X)\},
\]
and the density of $M_t$ is
\begin{equation}\label{eq:thetat}
	\theta(M_t)=\frac{|E|}{t}=:\theta_t(M).
\end{equation}

Moreover, the strength and fractional arboricity of $M_t$ satisfy:
\begin{equation}\label{eq:st}
	S(M_t)= \min\left\{ \frac{|X|}{t - r_M(E-X)}:X\subseteq E,\ r_M(E-X)<t\right\}=:S_t(M)
\end{equation}
and
\begin{equation}\label{eq:dt}
	D(M_t)=\max\left\{ \frac{|X|}{\min\{t, r_M(X)\}}:\emptyset \neq X \subseteq E\right\}
	=:D_t(M).
\end{equation}

The main goal of this section is to show that the universal density $\eta_t^*:=\eta_{M_t}^*$ of $M_t$ can be computed using the universal density $\eta^*:=\eta^*_M$ of $M$.  Recall that the matroid $M$ is homogeneous if and only if the universal density $\eta^*$ is constant. Since $\eta^*(E)=r(M)$, then $\eta^*(e)=r(M)/|E|$ for all $e\in E$ in this case.
We start with the following lemma.
\begin{lemma}\label{lem:mt-sh}
	If $M$ is (strictly) homogeneous, then $M_t$ is (strictly) homogeneous for any $t=1,\dots,r(M)-1.$
\end{lemma}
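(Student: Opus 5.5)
Both parts of the statement can be proved by averaging over the bases contained in a base of $M$, i.e. by pushing a pmf on $\cB(M)$ forward to a pmf on $\cB(M_t)$. Homogeneity and strict homogeneity are then detected through the pmf characterizations already recalled: Remark \ref{re:def-hom} for homogeneity and Definition \ref{def:s-h} for strict homogeneity, with unit weights $\si\equiv 1$.

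\textbf{Homogeneous case.} By Remark \ref{re:def-hom}, there is a pmf $\mu\in\cP(\cB(M))$ with $\cN^T\mu=\frac{r(M)}{|E|}\one$. Define a pmf $\nu$ on $\cB(M_t)$ by first drawing $B\sim\mu$ and then choosing a $t$-subset $T\subseteq B$ uniformly at random. Every such $T$ is independent in $M$ and has size $t$, so $T$ is a base of $M_t$. For $e\in E$,
\[
\bP(e\in T)=\sum_{B\ni e}\mu(B)\,\frac{t}{r(M)}=\frac{t}{r(M)}\cdot\frac{r(M)}{|E|}=\frac{t}{|E|}.
\]
The marginal of $\nu$ is therefore constant, and Remark \ref{re:def-hom} applied to $M_t$ shows that $M_t$ is homogeneous. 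As a cross-check, one could instead verify $\theta(M_t)=D(M_t)$ from (\ref{eq:dt}). Since $\min\{t,r(X)\}\ge r(X)\,t/r(M)$, we get $|X|/\min\{t,r(X)\}\le (r(M)/t)\,|X|/r(X)\le (r(M)/t)\,\theta(M)=|E|/t$, and this gives the same conclusion via (\ref{eq:hom-char}).

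\textbf{Strictly homogeneous case.} Here we need a product-form pmf, so plain averaging is not enough. Suppose $\la>0$ satisfies $\eta_\la=\frac{r(M)}{|E|}\one$. We will show that the same $\la$ witnesses strict homogeneity of $M_t$. This requires computing the marginals of $\mu^{(t)}_\la(T)\propto\la[T]$ over $\cB(M_t)$, which is the set of independent $t$-sets of $M$, and proving that they are constant. This is the main obstacle, since product measures do not push forward to product measures under the averaging above.

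The cleanest route avoids computing the marginals and uses Theorem \ref{thm:supp} instead. Strict homogeneity of $M_t$ is equivalent to $\beta_t:=\frac{t}{|E|}\one\in\co(\cB(M_t))$ together with $\cB_{\beta_t}(M_t)=\cB(M_t)$. The homogeneous case already gives $\beta_t\in\co(\cB(M_t))$. For the support condition, fix an independent $t$-set $T$. Extend $T$ to a base $B$ of $M$. Since $\cB_{\beta}(M)=\cB(M)$ for $\beta=\frac{r(M)}{|E|}\one$ (Theorem \ref{thm:supp} for $M$), there is $\mu\in\cU_\beta(M)$ with $\mu(B)>0$. The pushforward $\nu$ built above lies in $\cU_{\beta_t}(M_t)$ and gives $T$ probability at least $\mu(B)/\binom{r(M)}{t}>0$. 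Hence every base of $M_t$ lies in $\cB_{\beta_t}(M_t)$. Theorem \ref{thm:supp} then yields $\la'>0$ with $\eta_{\la'}=\beta_t$, so $M_t$ is strictly homogeneous. In this argument the only delicate points are confirming that the pushforward keeps the marginals exactly $\beta_t$ and that the strictly homogeneous hypothesis on $M$ makes every base of $M$ appear in the support of some pmf in $\cU_\beta(M)$. Both follow directly from the construction and Theorem \ref{thm:supp}.
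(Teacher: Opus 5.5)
Your proof is correct and follows the paper's argument: the same pushforward $\mu(\gamma)=\sum_{B\supseteq\gamma}\mu^*(B)/\binom{r}{t}$ gives constant marginals $t/|E|$, and strictness comes from that pushforward being positive on every base of $M_t$, which the paper uses tacitly and you route explicitly through Theorem~\ref{thm:supp}. Delete the sentence announcing that the same $\la$ witnesses strict homogeneity of $M_t$, since it is false in general: for $t=1$ the marginals of $\mu^{(1)}_\la$ are $\la(e)/\la(E)$, which would force $\la$ to be constant, but a strictly homogeneous $M$ can require a nonconstant $\la$.
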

\begin{proof}
	Assume that $M$ is homogeneous. Let $\mu^* \in \cP(\Ga)$ be an $\eta^*$-induced pmf, i.e., $\cN^T\mu^*=\eta^*$. Denote $r:=r(M)$, then \[\eta^*(e)=\sum\limits_{B\in \cB:e \in B}\mu^*(B)=r/|E|, \quad \forall e\in E.\]
Let $\ga$ be an independent set of size $t$. We define $\mu$ 
 as follows: \[\mu(\ga):= \sum\limits_{B\in \cB:\ga\subset B}\mu^*(B)\frac{1}{\binom{r}{t}}.\] We want to show that  $\mu $ is a pmf in $ \cP(M_t)$ with constant element-usages. Indeed, $\mu$ is a pmf because any base $B$ contains exactly $\binom{r}{t}$ subsets of size $t$ which are independent. Moreover, if we fix $e\in E$:

\begin{align*}
	\sum\limits_{\ga\in \cB(M_t):e \in \ga}\mu(\ga)&=	\sum\limits_{\ga\in \cB(M_t):e \in \ga}\sum\limits_{B\in \cB(M):\ga\subset B}\mu^*(B)\frac{1}{\binom{r}{t}}\\
	&=\frac{1}{\binom{r}{t}}	\sum\limits_{\ga\in \cB(M_t)}\sum\limits_{B\in \cB(M)}\ones_{e\in \ga}\ones_{\ga\subset B}\mu^*(B)\\
	&=\frac{1}{\binom{r}{t}}\sum\limits_{B\in \cB(M)}\binom{r-1}{t-1}\ones_{e\in B}\mu^*(B)\\
	&=\frac{1}{\binom{r}{t}}\binom{r-1}{t-1}\eta^*(e)\\
	&=\frac{t}{r}\frac{r}{|E|}=\frac{t}{|E|}.
\end{align*}

Hence, $M_t$ is homogeneous. Moreover, if $\mu^*(B)>0$ for all $B\in \cB$, then $\mu(\ga)>0$ for all $\ga\in M_t$ and $M_t$ is strictly homogeneous.
\end{proof}

The following lemma provides several properties of the sequence of truncations $M_t$.
\begin{lemma}\label{lem:sequence-sd}
	Let $M=(E,\cI)$ be a matroid. Let $t$ be an integer such that $1\leq t<r(M)$ and let $M_t$ be the $t$-truncation of $M$. Then, $D(M_t)\geq D(M_{t+1})$, $\theta(M_t)> \theta(M_{t+1})$, and $S(M_t)> S(M_{t+1})$.
\end{lemma}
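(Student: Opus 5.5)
Three claims, each handled directly from the formulas (\ref{eq:thetat}), (\ref{eq:st}) and (\ref{eq:dt}).

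\emph{Density.} Since $\theta(M_t)=|E|/t$ and $|E|>0$ (no loops and $r(M)>t\ge 1$), we get $\theta(M_t)=|E|/t>|E|/(t+1)=\theta(M_{t+1})$ immediately.

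\emph{Fractional arboricity.} We compare termwise in (\ref{eq:dt}). For every nonempty $X\subseteq E$, $\min\{t,r_M(X)\}\le \min\{t+1,r_M(X)\}$, and both are positive because $M$ has no loops. Hence
\[\frac{|X|}{\min\{t,r_M(X)\}}\ \ge\ \frac{|X|}{\min\{t+1,r_M(X)\}}\]
for every nonempty $X$. Taking the maximum over $X$ gives $D(M_t)\ge D(M_{t+1})$. Equality can occur, for instance when the maximizer has rank at most $t$, which is why this inequality is not strict.

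\emph{Strength.} Here the index sets differ, since the constraint $r_M(E-X)<t$ is stronger than $r_M(E-X)<t+1$. Let $X$ be feasible for $S_t(M)$. Then $r_M(E-X)\le t-1<t+1$, so $X$ is also feasible for $S_{t+1}(M)$, and
\[\frac{|X|}{t-r_M(E-X)}>\frac{|X|}{t+1-r_M(E-X)},\]
because $|X|>0$ and both denominators are positive. (The set $X$ is nonempty since $r_M(E-X)<t\le r_M(E)$.) Choose $X$ optimal for $S_t(M)$. Then
\[S(M_t)=\frac{|X|}{t-r_M(E-X)}>\frac{|X|}{t+1-r_M(E-X)}\ \ge\ S(M_{t+1}).\]
This gives the strict inequality, and it requires only that the optimum for $M_t$ be feasible for $M_{t+1}$, not the converse.

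The only point needing care is this feasibility direction in the strength step: every set admissible at level $t$ remains admissible at level $t+1$. Nothing else is delicate, since none of the three steps uses modulus theory, only the explicit formulas.
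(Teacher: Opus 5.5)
Your proof is correct and takes essentially the same approach as the paper. For $D$ you compare termwise, for $\theta$ you use $|E|/t$ directly, and for $S$ you take an optimizer of $S(M_t)$, which stays feasible for $S(M_{t+1})$ and gets a strictly larger denominator; the paper writes this last step in terms of the complement $A=E-X$, and your explicit checks of feasibility, $X\neq\emptyset$, and positive denominators are details the paper leaves implicit.
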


\begin{proof}
	First, we have 
	\[ D(M_t) =\max_{\emptyset \neq X \subseteq E} \frac{|X|}{\min\{t, r_M(X)\}} \geq  \max_{\emptyset \neq X \subseteq E} \frac{|X|}{\min\{t+1, r_M(X)\}} = D(M_{t+1}).\]
	Next, we have \[\theta(M_t) = \frac{|E|}{t}> \frac{|E|}{t+1}=\theta(M_{t+1}).\]
	Finally, let $A$ be an optimal set for $S(M_t)$. Then
	\[ S(M_t)= \frac{|E| - |A|}{t - r_M(A)} >\frac{|E| - |A|}{t+1 - r_M(A)} \geq S(M_{t+1}).\]
\end{proof}

 We note that $M_1$ is always homogeneous. By Lemma \ref{lem:mt-sh}, it follows that if $M_{t_0}$ is homogeneous for some $1<t_0 \leq r(M)$, then $M_{t}$ is also homogeneous for $t <t_0$. Hence, we introduce the following notion.
 \begin{definition}
 		Let $M=(E,\cI)$ be a matroid. The largest integer $t$ such that $M_t$ is homogeneous is called the {\it balancity} of $M$ and is denoted by $b(M)$.
 \end{definition} This definition is a generalization of the balancity of graphs in \cite{the-higher-order}. The following proposition provides a characterization of the balancity $b(M)$ of $M$. Recall that, for a matroid $M$, the unique maximal optimal set for $D(M)$ is called the core of $M$.

\begin{proposition}\label{mt-hom}
	Let $M=(E,\cI)$ be a matroid. Let $t$ be an integer such that $1\leq t \leq r(M)$. Then,	$M_t$ is homogeneous if and only if $D(M) \leq \theta_t(M)$. So,
	\[b(M)= \max\left\{ t\in \Z\cap[1,r(M)] :D(M) \leq \theta_t(M)\right\}.\]
	If $t > b(M)$, then $M_t$ is not homogeneous, and $D(M)=D(M_{t})$, furthermore, $M$ and $M_t$ share the same core $X^*$ with $ r(X^*)<t$.
\end{proposition}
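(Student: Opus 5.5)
The proof rests on the characterization (\ref{eq:hom-char}) from Remark \ref{re:def-hom}: $M_t$ is homogeneous if and only if $\theta(M_t)=D(M_t)$, i.e. $D_t(M)=\theta_t(M)$. Since $X=E$ gives $D_t(M)\ge |E|/t=\theta_t(M)$ always, homogeneity of $M_t$ is equivalent to the inequality
\[
\frac{|X|}{\min\{t,r_M(X)\}}\le \frac{|E|}{t}\quad\text{for all }\emptyset\neq X\subseteq E .
\]

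\emph{The equivalence.} We split the subsets $X$ into two cases.
\begin{itemize}
\item If $r_M(X)\ge t$, the left side is $|X|/t\le |E|/t$, so these $X$ are harmless.
\item If $r_M(X)<t$, the left side equals $\theta(X)$.
\end{itemize}
Hence $M_t$ is homogeneous if and only if $\theta(X)\le\theta_t(M)$ for every $X$ with $r(X)<t$. The direction ``$D(M)\le\theta_t(M)$ implies homogeneous'' is then immediate. For the converse, suppose $M_t$ is homogeneous and let $X$ satisfy $r(X)\ge t$. Then
\[
\theta(X)=\frac{|X|}{r(X)}\le \frac{|X|}{t}\le \frac{|E|}{t}=\theta_t(M).
\]
So every $X$ has $\theta(X)\le\theta_t(M)$, giving $D(M)\le\theta_t(M)$. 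The formula for $b(M)$ follows from the definition of balancity. Since $\theta_t(M)$ is strictly decreasing in $t$ (Lemma \ref{lem:sequence-sd}), the set of admissible $t$ is an initial segment, consistent with Lemma \ref{lem:mt-sh}.

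\emph{The case $t>b(M)$.} Here $D(M)>\theta_t(M)$, so $M_t$ is not homogeneous by the equivalence. To show $D(M_t)=D(M)$, the plan is to prove two inequalities.
\begin{itemize}
\item For every $X$ we have $\min\{t,r(X)\}\le r(X)$, so $D(M_t)\ge D(M)$.
\item For the reverse, take $X$ maximizing $|X|/\min\{t,r(X)\}$. If $r(X)\ge t$, its value is $|X|/t\le|E|/t<D(M)\le D(M_t)$, contradicting optimality. So every optimal $X$ for $D(M_t)$ has $r(X)<t$, and its value is $\theta(X)\le D(M)$.
\end{itemize}
Thus $D(M_t)=D(M)$.

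\emph{The shared core.} The same argument shows that the optimal sets for $D(M_t)$ are exactly the optimal sets for $D(M)$ of rank less than $t$. It remains to check that the core $X^*$ of $M$ has $r(X^*)<t$. Suppose instead $r(X^*)\ge t$. Then
\[
D(M)=\frac{|X^*|}{r(X^*)}\le \frac{|X^*|}{t}\le\frac{|E|}{t}=\theta_t(M)<D(M),
\]
a contradiction. So $r(X^*)<t$, and every $D(M)$-optimal set has rank less than $t$ by the same computation. Hence the optimal families for $D(M)$ and $D(M_t)$ coincide, their unique maximal members agree, and $M$ and $M_t$ share the core $X^*$.

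The main obstacle is this last step. One has to be careful that the core is defined as the unique maximal optimal set (which exists by the results recalled in (\ref{eq:pre})) and that no $M_t$-optimal set can have rank at least $t$. The strict inequality $\theta_t(M)<D(M)$ is what rules that out.
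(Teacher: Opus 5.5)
Your proof is correct and takes essentially the same route as the paper. You split subsets by whether $r(X)<t$ or $r(X)\ge t$, bound the latter by $|E|/t$, and apply (\ref{eq:hom-char}) to $M_t$. For $t>b(M)$ you observe that the optimal families for $D(M)$ and $D(M_t)$ coincide (every optimal set has rank less than $t$), so their unique maximal members, the cores, agree; the only cosmetic difference is that you handle $t=r(M)$ uniformly rather than as a separate case, which is valid since then $\min\{t,r(X)\}=r(X)$.
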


\begin{proof}
	For the case $t=r(M)$, the statement holds because of (\ref{eq:hom-char}) with constant $\si$. Next, for an integer $t$ such that $ t\leq r(M)-1$, we have
	\begin{equation}\label{eq:break}
		D(M) = \max\left\{\max\limits_{0<r(X)< t}\frac{|X|}{r(X)}, \max\limits_{r(X)\geq t}\frac{|X|}{r(X)}\right\}.
	\end{equation}
	By (\ref{eq:dt}), we also have
	\begin{equation}\label{eq:dtm}
		D_t(M) = \max\left\{\max\limits_{0<r(X)< t}\frac{|X|}{r(X)}, \max\limits_{r(X)\geq t}\frac{|X|}{t}\right\}=\max\left\{\max\limits_{0<r(X)< t}\frac{|X|}{r(X)}, \frac{|E|}{t}\right\}.
	\end{equation}
	Note that we always have
	\begin{equation}\label{eq:obs}
		\max\limits_{r(X)\geq t}\frac{|X|}{r(X)} \leq \frac{|E|}{t}.
	\end{equation}
	Therefore,  $D(M)\leq \frac{|E|}{t}$, if and only if, $\max\limits_{0<r(X)< t}\frac{|X|}{r(X)}\leq \frac{|E|}{t}$, and if and only if, $D_t(M) = \frac{|E|}{t}=\theta_t(M)$. Hence, $D(M)\leq \frac{|E|}{t}=\theta_t(M)$, if and only if, $M_t$ is homogeneous.

	Assume that $M_t$ is not homogeneous, then  $D_t(M)> \frac{|E|}{t}$. By (\ref{eq:dtm}) and \eqref{eq:obs}, we obtain 
	
	\[D_t(M) =\max\limits_{0<r(X)< t}\frac{|X|}{r(X)}> \frac{|E|}{t} \geq	\max\limits_{r(X)\geq t}\frac{|X|}{r(X)}.\]
	Combining this with (\ref{eq:break}), we have
		\[D(M)=\max\limits_{0<r(X)< t}\frac{|X|}{r(X)}=D_t(M)> \frac{|E|}{t} \geq	\max\limits_{r(X)\geq t}\frac{|X|}{r(X)}.\]
	Therefore, the sets of solutions of $D(M)$ and $D(M_{t})$ are identical to the set of solutions of  $\max\limits_{0<r(X)< t}\frac{|X|}{r(X)}$. 
	Note that, as mentioned after (\ref{eq:pre}), the fractional arboricity problems $D(M)$ and $D(M_{t})$ admit unique maximal cores $X^*$ and $X_t^*$, respectively. It follows that $X^* = X_t^*$ and $ r(X^*)<t$.
\end{proof}

\subsection{Universal densities for all truncations}

The main goal of this section is to find the universal density $\eta^*_t$ of $M_t$ for all $t \leq r(M)$. For $t\leq b(M)$, the truncation $M_t$ is homogeneous by definition of $b(M)$, and in this case, the universal density of $M_t$ is the constant $t/|E|$. Therefore, we shall focus on the range when $t=b+1,\dots,r(M)$. In Section \ref{sec:dual-trun}, we will address the case $ t>r(M)$.

\begin{theorem}\label{thm:main-truncation}
	Let $M$ be a matroid with  rank function $r$. Let $s_1<s_2<...<s_k$ be the distinct values of the universal density $\eta^*$ of $M$. For $i=1,\dots,k$, denote $E_i:= \{e \in E: \eta^*(e)\geq s_i\}$, 
	$b_i:=\lfloor |E_i| s_i\rfloor$, and $c_i:=\lfloor |E_i| s_i\rfloor+r(E-E_i)$, where $\lfloor \cdot \rfloor$ is the floor function. Denote $c_0:= 0$.
	
	 For $t=1,\dots,r(M)$, let $\eta_t^*$ be the universal density of the $t$-truncation $M_t$ of $M$. Then, we have the following:
	\bi
	\item[(i)] $b_i$ is the balancity of $M/(E-E_i)$ for $i=1,\dots,r(M)$;
	\item[(ii)]  $0=c_0<c_1\leq c_2\leq\dots\leq c_{k-1}<c_k=r(M);$
	\item[(iii)]$c_i-b_i=r(E-E_i)\leq c_{i-1}$ for $i=1,\dots,r(M)$; 
	\item[(iv)] For $i=1,\dots,k$, and $t$ such that $c_{i-1}<t\leq c_{i}$ , then  
	
	\begin{equation}\label{eq:etat}
		\eta^*_{t}(e)=
		\begin{cases}
			\displaystyle\frac{t-r(E-E_i)}{|E_i|} & \text{if } e \in E_i; \\
			\eta^*(e)   & \text{if } e \in E-E_i.
		\end{cases}
	\end{equation}
	\ei
\end{theorem}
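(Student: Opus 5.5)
We verify a candidate $\eta_t$ against an optimality certificate for $M_t$, using the structure of the contractions $M/(E-E_i)$ from (\ref{eq:pre}). For (i), recall that the universal density of $N_i:=M/(E-E_i)$ is $\eta^*|_{E_i}$, so $D(N_i)=1/s_i$. The rank of $N_i$ is $r(E)-r(E-E_i)=\eta^*(E_i)$. Proposition \ref{mt-hom} applied to $N_i$ says $(N_i)_t$ is homogeneous iff $1/s_i\le |E_i|/t$, i.e. $t\le |E_i|s_i$. Hence $b(N_i)=\lfloor |E_i|s_i\rfloor$. We must also check $b_i\le r(N_i)$, which follows from $|E_i|s_i\le \eta^*(E_i)$ because $\eta^*\ge s_i$ on $E_i$.

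For (ii) and (iii), the identity $c_i-b_i=r(E-E_i)$ is by definition. For the inequality $r(E-E_i)\le c_{i-1}$, write $E-E_i=(E-E_{i-1})\cup A_{i-1}$. On $E_{i-1}$, $\eta^*$ takes value $s_{i-1}$ on $A_{i-1}$ and is strictly larger on $E_i$. Hence $|E_{i-1}|s_{i-1}\ge s_{i-1}|A_{i-1}|+s_{i-1}|E_i|$. Meanwhile $r_{N_{i-1}}(A_{i-1})=\eta^*(A_{i-1})=s_{i-1}|A_{i-1}|$, since $A_{i-1}$ is the core of $N_{i-1}$ with density $1/s_{i-1}$. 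This gives $r(E-E_i)=r(E-E_{i-1})+s_{i-1}|A_{i-1}|$, which is an integer at most $r(E-E_{i-1})+|E_{i-1}|s_{i-1}$, so its floor is at most $c_{i-1}$. Monotonicity $c_{i-1}\le c_i$ then follows from $c_i\ge r(E-E_i)$; the strict inequality $c_{i-1}<c_i$ needs $b_i\ge1$. The endpoint values are $c_1=b_1\ge1$ and $c_k=r(E)$, the latter because $N_k$ is homogeneous with $|E_k|s_k=r(N_k)$. A strict inequality between $c_{i-1}$ and $r(E-E_i)$ is not needed for (iv).

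For (iv), fix $c_{i-1}<t\le c_i$ and let $\eta_t$ be the vector in (\ref{eq:etat}). First we show $\eta_t\in\co(\cB(M_t))$. Restrict $M$ to $E-E_i$, where $\eta^*$ is the universal density of $M|(E-E_i)$ (a standard fact from \cite{truong2024modulus}), so $\eta^*|_{E-E_i}\in \co(\cB(M|(E-E_i)))$ and has total mass $r(E-E_i)$. Set $t'=t-r(E-E_i)$. Then $t'\le b_i$ by the choice of $t$, and $t'\ge1$ by (iii). By (i), $(N_i)_{t'}$ is homogeneous, with universal density the constant $t'/|E_i|$. Take convex combinations of bases $B_1$ of $M|(E-E_i)$ and of bases $B_2$ of $(N_i)_{t'}$. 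Each union $B_1\cup B_2$ is independent in $M$ of size $t$, hence a base of $M_t$. Averaging over the product distribution gives marginals $\eta_t$.

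Second, we prove optimality. Since the $p=2$ problem has a unique minimizer, it suffices to verify the KKT condition
$\sum_e \eta_t(e)(\ones_B(e)-\eta_t(e))\ge0$ for every base $B$ of $M_t$, i.e. $\sum_{e\in B}\eta_t(e)\ge \|\eta_t\|_2^2$. Let $\alpha=t'/|E_i|$. The values of $\eta_t$ are $\eta^*$ on $E-E_i$, all of which are at most $s_{i-1}$, and the constant $\alpha$ on $E_i$. We have $\alpha\le s_i$, and $\alpha>s_{i-1}$ whenever $t>c_{i-1}$. The last claim comes from comparing $t'$ with $|E_i|s_{i-1}$ via the bound in (iii); this is the delicate step and may need refining.

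Given this ordering, the level sets of $\eta_t$ are the sets $E_j$ for $j\le i$. By the layer-cake formula,
$\sum_{e\in B}\eta_t(e)=\sum_j (\text{level gap})\,|B\cap E_j|$. The target $\|\eta_t\|_2^2$ is the same expression with $|B\cap E_j|$ replaced by $\eta_t(E_j)$. Since $\eta_t$ is optimal for $M$ on the layers with $j<i$, one has $\eta_t(E_j)=r(E)-r(E-E_j)$ there, and $\eta_t(E_i)=t-r(E-E_i)$. The needed bound is $|B\cap E_j|\ge \min\{t,r(E)\}-r(E-E_j)$, which holds because $|B\cap E_j|\ge |B|-r_M(E-E_j)$.

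The main obstacle is the strict separation $\alpha>s_{i-1}$: the interval endpoints $c_i$ exist precisely to guarantee it. I expect it to follow from the inequality $|E_{i-1}|s_{i-1}\ge s_{i-1}|A_{i-1}|+s_{i-1}|E_i|$ together with the floor in $c_{i-1}$. Ties $\alpha=s_{i-1}$ are harmless, since the layers then merge.
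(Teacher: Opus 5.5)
Your certificate strategy works and differs from the paper's. You write down the candidate, realize it by a product pmf, and certify optimality through the first-order condition and a layer-cake decomposition. As written, though, it has gaps exactly where the endpoints $c_i$ enter.

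(a) Your derivation of $c_{i-1}\le c_i$ does not follow. Knowing $r(E-E_i)\le c_{i-1}$ and $r(E-E_i)\le c_i$ says nothing about how $c_{i-1}$ and $c_i$ compare. Also, $c_{k-1}<c_k$ is never shown, and $b_k\ge 1$ does not give it.

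(b) The separation $\alpha>s_{i-1}$, which you rightly call the crux of (iv), is left unproven. The bound $r(E-E_i)\le c_{i-1}$ from (iii) only yields $t'\ge 1$. Without the separation, the superlevel sets of $\eta_t$ need not be the $E_j$ (they could be of the form $E_l-E_i$), so the termwise comparison is unavailable.

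(c) For $j<i$ one has $E_j\supseteq E_i$, so $\eta_t(E_j)=t-\eta^*(E-E_j)=t-r(E-E_j)$, not $r(E)-r(E-E_j)$. With your stated value, the bound $|B\cap E_j|\ge t-r(E-E_j)$ would be insufficient when $t<r(E)$. With the correct value, it is exactly what is needed.

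All three are repaired by one identity you have nearly derived. Since $s_{i-1}|A_{i-1}|=r(E-E_i)-r(E-E_{i-1})$ is an integer and $|E_{i-1}|=|A_{i-1}|+|E_i|$, pulling it out of the floor gives
\[
c_{i-1}=r(E-E_i)+\lfloor s_{i-1}|E_i|\rfloor .
\]
This yields the following:
\begin{itemize}
\item (iii) follows at once.
\item $c_{i-1}\le r(E-E_i)+\lfloor s_i|E_i|\rfloor=c_i$, because $s_{i-1}<s_i$.
\item $c_{k-1}<c_k=r(E)$, because $s_k|E_k|=r(E)-r(E-E_k)$ is an integer strictly larger than $s_{k-1}|E_k|$.
\item $t>c_{i-1}$ forces $t'\ge \lfloor s_{i-1}|E_i|\rfloor+1>s_{i-1}|E_i|$, i.e.\ $\alpha>s_{i-1}$ strictly, so ties never arise.
\end{itemize}
With these repairs your proof is complete.

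The paper instead recurses. Proposition \ref{mt-hom} shows that $M_t$ keeps the core $A_1$ for $t>b_1$. Lemma \ref{lem:tr-tr} identifies $M_t/A_1$ with $(M/A_1)_{t-r(A_1)}$, and Lemma \ref{lem:shrink-hom} supplies $c_1\le c_2$. Your argument is non-recursive and exhibits an explicit optimal pmf. Its optimality step could equivalently invoke Theorem \ref{thm:lexi}(ii) for $M_t$, since $\eta_t(E-E_j)=r(E-E_j)=r_{M_t}(E-E_j)$ for the sublevel sets. The paper's route, for its part, explains structurally why blocks of the principal partition merge as $t$ decreases.
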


In order to prove Theorem \ref{thm:main-truncation}, we first want to study the relation between  contractions using the core  and truncations of a matroid.
When $b(M)<r(M)$, the matroids $M_{b+1},\dots, M_{r(M)}$ are not homogeneous. By Proposition \ref{mt-hom}, these matroids share the same core $X^*$ with $r(X^*)\leq b$. The following lemma shows that the matroid contraction $(M_t)/X^*$ is identical to the $(t-r(X^*))$-truncation $(M/X^*)_{t-r(X^*)} $  of $M/X^*$. With this lemma, we will be able to study the contraction $(M_t)/X^*$ by applying Proposition \ref{mt-hom} to matroid truncations of $M/X^*$. 
 
\begin{lemma}\label{lem:tr-tr}
	Let $b=b(M)$ be the balancity of $M$. Assume that $b<r(M)$. Let $X^*$ be the core of $M$. For $t=b+1,\dots,r(M)$,	we have that $(M_t)/X^*$ is identical to the $(t-r(X^*))$-truncation  of $M/X^*$.
\end{lemma}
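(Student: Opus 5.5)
The plan is to compare rank functions: two matroids on the same ground set $E-X^*$ coincide exactly when their rank functions agree. By Proposition \ref{mt-hom}, since $t>b(M)$, the core $X^*$ of $M$ is also the core of $M_t$ and satisfies $r(X^*)<t$. Write $\rho:=r_M(X^*)$. For $Y\subseteq E-X^*$ we compute both ranks.

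For the contraction of the truncation, the contraction formula and $r_{M_t}(Z)=\min\{t,r_M(Z)\}$ give
\[
r_{(M_t)/X^*}(Y)=r_{M_t}(Y\cup X^*)-r_{M_t}(X^*)=\min\{t,r_M(Y\cup X^*)\}-\min\{t,\rho\}.
\]
Because $\rho<t$, we have $\min\{t,\rho\}=\rho$. Hence
\[
r_{(M_t)/X^*}(Y)=\min\{t-\rho,\ r_M(Y\cup X^*)-\rho\}.
\]
For the truncation of the contraction, $r_{M/X^*}(Y)=r_M(Y\cup X^*)-\rho$. Therefore
\[
r_{(M/X^*)_{t-\rho}}(Y)=\min\{t-\rho,\ r_{M/X^*}(Y)\}=\min\{t-\rho,\ r_M(Y\cup X^*)-\rho\},
\]
which is the same expression, so the two matroids are equal.

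It remains to check that the $(t-\rho)$-truncation of $M/X^*$ is well defined, that is, $1\le t-\rho\le r(M/X^*)=r(M)-\rho$. This follows from $\rho<t\le r(M)$.

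The only substantive step is the inequality $r(X^*)<t$. It is what removes the $\min$ in $r_{M_t}(X^*)$, and it is supplied by Proposition \ref{mt-hom}. Everything else is routine manipulation of rank functions.
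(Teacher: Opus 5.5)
Your proof is correct and follows essentially the same route as the paper: both rest on the single fact $r(X^*)<t$ supplied by Proposition~\ref{mt-hom}, and then verify directly that the two matroids on $E-X^*$ coincide. The only difference is that you compare rank functions, $\min\{t,r_M(Y\cup X^*)\}-r_M(X^*)$ against $\min\{t-r_M(X^*),\,r_M(Y\cup X^*)-r_M(X^*)\}$, whereas the paper compares independent sets using $M|X^*=M_t|X^*$; your version is slightly more compact and makes explicit that nothing about $X^*$ is used beyond $r_M(X^*)<t$.
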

\begin{proof}
	First, by Proposition \ref{mt-hom}, all matroid truncations $M_{b+1},\dots, M_{r(M)}$ share the same core $X^*$ where $r(X^*) \leq b$, and $X^*$ is the unique maximal optimal set for
	\[\max\limits_{0<r(X)\leq b}\frac{|X|}{r(X)}.\]
	 It follows from $r(X^*) \leq b$ that $M|X^*=M_t|X^*$ for $t = b+1,\dots,r(M)$. 
	For any $t \geq b+1$, by the definition of contraction, a subset $I \in E$ is independent in $(M_t)/X^*$ if and only if $\exists B \in \cB(M_t|X^*)$ such that $B \cup I $ is independent in $M$ and $r(X^* \cup I) \leq t$.
	Conversely, a subset $I \in E$ is independent in $(M/X^*)_{t-r(X^*)} $ if and only if $\exists B \in \cB(M|X^*)$ such that $B \cup I $ is independent in $M$ and $r(X^* \cup I)-r(X^*) \leq t-r(X^*)$, which is equivalent to $r(X^* \cup I) \leq t$.
	Therefore, $(M_t)/X^*$ and $(M/X^*)_{t-r(X^*)}$ share the same collection of independent sets.
\end{proof}
Next, we want to study the balancity of $M/X^*$. If $r(X^*)=b(M)$, then $(M/X^*)_{b(M)-r(X^*)}$ has rank $0$.  If $r(X^*)<b(M)$, the following lemma shows that the balancity of $M/X^*$ is at least $b(M)-r(X^*)$. 
\begin{lemma}\label{lem:shrink-hom}
	Let $b=b(M)$ be the balancity of a matroid $M$. Let $X^*$ be the core of $M$. If $r(X^*)<b(M)$, then  $(M/X^*)_{b-r(X^*)}$  is homogeneous. And in fact, it is strictly homogeneous.
\end{lemma}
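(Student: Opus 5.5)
The plan is to set $N:=M/X^*$, $r':=r(X^*)$, $s:=b-r'\geq 1$, and to verify the density criterion of Proposition \ref{mt-hom} for $N_s$. Homogeneity comes first; for strict homogeneity I would upgrade the same inequalities to strict ones and then invoke Theorem \ref{thm:supp}.

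Write $d:=D(M)=|X^*|/r'$ and $\theta:=|E|/b=\theta_b(M)$. Since $M_b$ is homogeneous, Proposition \ref{mt-hom} gives $d\leq\theta$. The target density is
\[
\theta_s(N)=\frac{|E|-|X^*|}{b-r'}=\frac{\theta b-dr'}{b-r'}=\theta+\frac{(\theta-d)r'}{b-r'}\;\geq\;\theta\;\geq\; d .
\]
Next I bound $D(N)$. Take a nonempty $Y\subseteq E-X^*$ with $r_N(Y)>0$, and set $Z:=Y\cup X^*$, so that $r_N(Y)=r(Z)-r'$. Because $|Z|\leq d\,r(Z)$ and $|X^*|=d\,r'$, we get
\[
|Y|\leq d\,(r(Z)-r')=d\,r_N(Y).
\]
Hence $D(N)\leq d\leq\theta_s(N)$. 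Since $s\leq r(N)=r(M)-r'$, Proposition \ref{mt-hom} applied to $N$ shows that $N_s$ is homogeneous, with constant universal density $\beta:=s/|E-X^*|$.

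For strict homogeneity, the first step is to make the above inequality strict. If equality $|Y|=d\,r_N(Y)$ held for some nonempty $Y$, then $Z=Y\cup X^*\supsetneq X^*$ would be optimal for $D(M)$. This contradicts the fact that the core $X^*$ is the unique maximal optimal set. So $|Y|<d\,r_N(Y)\leq\theta_s(N)\,r_N(Y)$ for every nonempty $Y$ with $r_N(Y)>0$; for loops of $N$ the rank-zero case must be checked separately. Now use the description of the base polytope of $N_s$: it consists of the $x\geq 0$ with $x(Y)\leq\min\{s,r_N(Y)\}$ for all $Y$ and $x(E-X^*)=s$. The constant vector $\beta$ satisfies every inequality strictly:
\begin{itemize}
\item if $r_N(Y)<s$, this is the strict bound just proved, since $\beta(Y)=s|Y|/|E-X^*|$;
\item if $r_N(Y)\geq s$ and $Y\neq E-X^*$, then $\beta(Y)<s$;
\item the nonnegativity constraints hold strictly because $\beta>0$.
\end{itemize}
So $\beta$ lies in the relative interior of $\co(\cB(N_s))$. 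Consequently, for every base $\gamma$ of $N_s$ and small $\ep>0$, the vector $\beta':=(\beta-\ep\ones_\gamma)/(1-\ep)$ still lies in the polytope. Writing $\beta=\ep\ones_\gamma+(1-\ep)\beta'$ and expanding $\beta'$ as a convex combination of bases gives a pmf in $\cU_\beta$ whose support contains $\gamma$. Therefore $\cB_\beta=\cB(N_s)$, and Theorem \ref{thm:supp} yields weights $\la>0$ with $\eta_\la=\beta$. Hence $N_s$ is strictly homogeneous.

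The main obstacle is the strictness step. It needs the exact form of the polytope inequalities, and it needs care for loops of $N$: elements of $\cl_M(X^*)-X^*$ have $r_N=0$. Such an element $e$ gives $Z=X^*\cup\{e\}$ with $|Z|/r(Z)>d$, which contradicts the definition of $d$, so the core is closed and $N$ has no loops. I would verify this point first.
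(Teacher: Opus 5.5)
Your proof is correct. The homogeneity half is the paper's argument in slightly different packaging. The paper works directly with \eqref{eq:dtm} for $\cM_t$. It obtains the strict bound $|A|/r_{\cM}(A)<|X^*|/r(X^*)\le |E|/b\le \theta(\cM_t)$ in one step, from the maximality of the core plus the mediant inequality. You first prove $D(N)\le d\le\theta_s(N)$ and invoke Proposition \ref{mt-hom} for $N$, then upgrade to strictness afterwards by the same maximality argument.

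The real difference is in the strict-homogeneity step, which the paper dispatches in one sentence (``because of the strict inequality''). Implicitly this rests on the characterization in Section \ref{strictly-hom}: if $\theta(A)<\theta(E)$ for every $A\subsetneq E$, then the matroid is strictly homogeneous. The paper applies this to $\cM_t$, where sets with $r_{\cM}(A)\ge t$ are automatically strictly sparser. That theorem's proof goes through the product-weight minimization \eqref{eq:min-det} and a limiting argument on a minimizing sequence.

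You instead give a direct polyhedral argument. Strict slack in every Edmonds inequality for the base polytope of $N_s$ puts the constant vector in its relative interior. Hence every base can be placed in the support of an inducing pmf, i.e.\ $\cB_\beta=\cB(N_s)$, and Theorem \ref{thm:supp} supplies $\la$.

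Your route is self-contained and avoids a forward reference. It also makes explicit both the closedness of the core (no loops in $N$) and the sets with $r_N(Y)\ge s$, which the paper leaves tacit. The paper's route is shorter and, through that characterization, also yields that $\cM_t$ is connected.
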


\begin{proof}
	Denote $t= b-r(X^*)$ and $\cM:=M/X^*$. We want to show that $D(\cM_t) = \theta(\cM_t)$. Recall that $\cM$ has ground set $E-X^*$, and
for any nonempty subset $A\subset E-X^*$, we have $r_{\cM}(A)=r(X^*\cup A)-r(X^*)$. By \ref{eq:dtm}, we have

\begin{equation*}
	D(\cM_t) =\max\left\{\max\limits_{0<r_{\cM}(A)< t}\frac{|A|}{r_{\cM}(A)}, \frac{|E-X^*|}{t}\right\}.
\end{equation*}
	We want to show that 
	\begin{equation}\label{eq:1<2}
		\max\limits_{0<r_{\cM}(A)< t}\frac{|A|}{r_{\cM}(A)}< \frac{|E-X^*|}{t}=\theta(\cM_t).
	\end{equation}
	 Since $X^*$ is closed in $M$, it is enough to consider sets $A \neq \emptyset$ such that $r(X^*\cup A)-r(X^*) < t$. By the definition of $X^*$, we have
	\begin{equation}\label{eq:defX}
		\frac{|X^*\cup A|}{r(X^*\cup A)}<\frac{|X^*|}{r(X^*)}.
	\end{equation}
	Since  $r(X^*)<b(M)$, we have $r(X^*)= r_{M_b}(X^*)$. And since $M_b$ is homogeneous, we have
	\begin{equation}\label{eq:Mbhom}
		\frac{|X^*|}{r(X^*)}= \frac{|X^*|}{r_{M_b}(X^*)} \leq \frac{|E|}{b}.
	\end{equation}
	Hence,
	\begin{align*}
	\frac{|A|}{r_\cM(A)}	=	\frac{|A|}{r(X^*\cup A)-r(X^*)}&=	\frac{|X^*\cup A|-|X^*|}{r(X^*\cup A)-r(X^*)} 
			< \frac{|X^*|}{r(X^*)} & (\text{by (\ref{eq:defX})})\\
			&\leq \frac{|E|}{b} \leq \frac{|E|-|X^*|}{b-r(X^*)} &(\text{by (\ref{eq:Mbhom})}).
	\end{align*}
	Hence, $D(\cM_t) = \theta(\cM_t)$. It follows that
	$\cM_t$ is homogeneous. 
	In fact, it is strictly homogeneous because of the strict inequality in (\ref{eq:1<2}).
\end{proof}

Finally, we are ready to prove Theorem \ref{thm:main-truncation}. By Lemma \ref{lem:tr-tr}, every contraction $M_t/X^*$ is identical to a truncation of $M/X^*$. So, applying Proposition \ref{mt-hom} and Lemma \ref{lem:tr-tr} to $M/X^*$
will reveal additional values of $\eta_t^*$. Continuing by contracting the core of $M/X^*$ and applying these two lemmas to further reveal values of $\eta_t^*$. This strategy determines all the values of $\eta_t^*$.

\begin{proof}[Proof of Theorem \ref{thm:main-truncation}]
	Denote $A_i := \{e \in E : \eta^*(e) = s_i\}$. By definition, we have $E_1 = E$, and for 
	$j = 2, \dots, k$, 
	\[
	E_i = E - \bigcup\limits_{j=1}^{i-1} A_j.
	\]
We prove this theorem by induction as follows.
	By (\ref{eq:pre}), we have $s_1= 1/D(M)$ and $M$ has the core $A_1$. By Proposition \ref{mt-hom}, the balancity of $M$ is equal to 
	$\lfloor |E| / D(M) \rfloor=\lfloor |E_1| s_1\rfloor=b_1$. Thus, for $c_0=0<t \leq c_1 = b_1$, we have that $M_t$ is homogeneous, equivalently, $\eta_t^*$ is a constant vector, and
	\[\eta_t^*(e) =\frac{t}{E_1} =\frac{t - r(E - E_1)}{E_1}.\]
	So, (i),(ii),(iii), and (iv) hold for $i=1$.
	
		By Proposition \ref{mt-hom}, since $A_1$ is the core of $M$, it is also the core of $M_t$ for $t > c_1=b_1$, hence
	\[\eta_t^*(e) = \eta^*(e) = s_1 \text{ for } e \in A_1=E-E_2.\]
	Next, we consider the matroid $M / A_1 = M / (E - E_2)$. By Proposition \ref{mt-hom}, the 
	balancity of $M / (E - E_2)$ is equal to 	$\lfloor |E_2| / D(M/ (E - E_2)) \rfloor=\lfloor |E_2| s_2\rfloor=b_2$, and $M / (E - E_2)$ has the core $A_2$. By 
	Lemma \ref{lem:shrink-hom}, the balancity $b_2$ of 
	$M / (E - E_2)$ is at least $b_1 - r(E - E_2)$. Therefore, 
	\[c_2 = b_2 + r(E - E_2) \geq b_1 = c_1.\] By Proposition \ref{mt-hom}, 
	we also have \[r(E - E_2) \leq b_1 = c_1.\]
	Therefore, if $t$ satisfies $c_1 < t \leq c_2$, by Lemma \ref{lem:tr-tr}, 
	$M_t / (E - E_2) = (M / (E - E_2))_{t - r(E - E_2)}$ and this truncation is homogeneous 
	since $t - r(E - E_2) \leq b_2$, equivalently,
	\[
	\eta_t^*(e) = \frac{t - r(E - E_2)}{E_2}, \quad \text{for } e \in E_2.
	\]
	So, (i),(ii),(iii), and (iv) hold for $i=2$.
	Continue this process until the proof is completed.

\end{proof}
\begin{remark}
	With the assumptions in Theorem \ref{thm:main-truncation},
	if $c_{i-1}<t\leq c_{i}$ for $i>1$, then $\eta^*_t$ attains $i$ distinct values, and $\eta_t^*$ agrees with $\eta^*$ in its first $i-1$  distinct values in increasing order.
	When we decrease $t$ from
	$c_k=r(M)$ to $c_0=0$, the blocks $A_i$, which form a partition of $E$
	induced by $\eta^*$, start to merge together, in the sense that, the
	partition induced by $\eta_t^*$ is obtained by successively merging
	blocks of the principal partition as $t$ passes through the critical
	values $c_i$. Note that $c_{i-1}$ may equal $c_i$ for several $i$, so two or more
	blocks can merge at a time.
	
\end{remark}

\subsection{Dual truncations}\label{sec:dual-trun}
In Theorem \ref{thm:main-truncation},  we provided the universal density of all matroid truncations of a given matroid $M$. Next, we consider a different collection of matroids. Let $M^*$ be the dual matroid of $M$. It was shown in \cite{truong2024modulus} that the universal density of $M^*$ is equal to $\one - \eta^*$, where $\one$ is the vector of all ones.
For $t=r(M),r(M)+1,\dots,|E|-1$, we define the {\it dual truncation} $M_t$ as follows:

\begin{equation}\label{eq:addedge}
	M_t:= ((M^*)_{|E|-t})^*,
\end{equation}where $M^*$ denotes the dual matroid of $M$. For consistency, when $t=|E|$, we denote by $M_t$ the matroid whose ground set $E$
is a base. It follows that the base family of $M_t$ is
\begin{equation}
	\cB(M_t)= \left\{ X\subset E: |X|=t, X \supset B \text{ for some base } B \in \cB(M)\right\}.
\end{equation}
Moreover, the following proposition provides the fractional arboricity of $M_t$.
\begin{proposition}
	Let $M=(E,\cI)$ be a matroid with the rank $r(M)<|E|$. For $t=r(M),r(M)+1,\dots,|E|-1$, let $M_t$ be the matroid defined as in (\ref{eq:addedge}). Then,
	\begin{equation*}
		D(M_t)=\max\limits_{\emptyset \neq X\subset E}\frac{|X|}{t-r(M)+r(X)}.
	\end{equation*}
\end{proposition}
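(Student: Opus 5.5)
The plan is to compute the rank function of $M_t$ explicitly and substitute it into the definition of $D$. Write $r=r_M$, $n=|E|$, and $s=n-t$, so that $M_t=((M^*)_s)^*$ with $0< s\le n-r(M)=r(M^*)$.

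\emph{Step 1: the rank function of $M_t$.} For $t=r(M)$ we have $M_t=M$, and the formula reduces to $D(M)$. For $t>r(M)$, apply the dual rank formula twice. First,
\[
r_{M^*}(Y)=|Y|-r(M)+r(E-Y),
\]
and the truncation gives $r_{(M^*)_s}(Y)=\min\{s,r_{M^*}(Y)\}$, with total rank $s$. Dualizing again,
\[
r_{M_t}(X)=|X|-s+\min\{s,\,r_{M^*}(E-X)\}.
\]
Substituting $r_{M^*}(E-X)=|E-X|-r(M)+r(X)$ yields
\[
r_{M_t}(X)=\min\{|X|,\;r(X)+t-r(M)\}.
\]
This can be cross-checked against the stated base family: the sets of size $t$ containing a base are exactly the maximal sets $X$ for which $|X|\le r(X)+t-r(M)$, that is, for which $X$ misses at most $t-r(M)$ elements outside a basis of $X$.

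\emph{Step 2: substitute into $D$.} Since $r(M)<|E|$, we get $r_{M_t}(X)>0$ for every nonempty $X$. Hence
\[
D(M_t)=\max_{X\ne\emptyset}\max\Bigl\{1,\frac{|X|}{t-r(M)+r(X)}\Bigr\}.
\]
It remains to remove the term $1$. Take $X=E$: the ratio is $|E|/t\ge 1$, since $t\le |E|$. Therefore the maximum of the ratio is already at least $1$, and the stated formula follows.

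The main obstacle is the bookkeeping in the double dualization, in particular confirming that $(M^*)_s$ has rank exactly $s$ so that the outer dual formula applies correctly. I would verify this carefully and then confirm the resulting rank function against the given description of $\cB(M_t)$.
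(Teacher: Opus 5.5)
Your proof is correct and is essentially the paper's argument: you dualize twice to get $r_{M_t}(X)=\min\{|X|,\,t-r(M)+r(X)\}$, then observe that sets on the branch where the ratio equals $1$ are dominated by $X=E$, whose ratio is $|E|/t\ge 1$. One small correction: positivity of $t-r(M)+r(X)$, and hence of $r_{M_t}(X)$, for nonempty $X$ comes from $t>r(M)$, or (when $t=r(M)$) from the paper's standing assumption that $M$ is loopless, not from $r(M)<|E|$.
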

\begin{proof}
	For $X\subset E$, 
	\begin{align}
		r_{M_t}(X)&=|X| -r((M^*)_{|E|-t})+r_{(M^*)_{|E|-t}}(E-X)\\
		&=|X|-(|E|-t)+\min\left\{ E-t,|E|-|X|-r(M)+r(X)\right\}\\
		&=\min\left\{ |X|,t-r(M)+r(X)\right\}.
	\end{align}
If $|X| \leq t-r(M)+r(X)$, then 
\[\frac{|X|}{r_{M_t}(X)} = 1 < \frac{|E|}{t}= \frac{|E|}{r_{M_t}(E)}.\]
Hence
\begin{equation*}
	D(M_t)=\max\limits_{\emptyset \neq X\subset E}\frac{|X|}{r_{M_t}(X)} = \max\limits_{\emptyset \neq X\subset E}\frac{|X|}{t-r(M)+r(X)}.
\end{equation*}
	
\end{proof}

\begin{remark}
	Since the universal density $\eta^*$ of $M$ encodes the universal densities of all 
	truncations (Theorem~\ref{thm:main-truncation}) and of the dual matroid 
	(see~\cite{truong2024modulus}), it also encodes the universal density $\eta_t^*$ of 
	$\mathcal{M}^t$ for $t = r(M), r(M)+1, \dots, |E|-1$. In particular, $\eta^*$ also 
	determines the values of $D(M_t)$ and $S(M_t)$.
\end{remark}

 See Figure \ref{figure1} and Table \ref{table1} for an example with all universal densities of $M_t$, for $t=1,\dots,|E|$.


\section{Kullback--Leibler divergence}\label{sec:kl}
 
In this section, we study an optimization problem on the base polytope
$\co(\cB(M))$ of a matroid $M$, namely the convex hull of the indicator
functions of all bases in $\cB(M)$. 
First, we recall the relevant definitions for probability vectors (i.e., discrete distributions on finite sets). Let $p,q\in\mathbb{R}^n_{\geq 0}$ be two vectors.  The \emph{Kullback--Leibler divergence} between $p$ and $q$ is defined as
	\begin{equation*}
		D_{\KL}(p\|q) := -\sum_{i=1}^np_i\log\frac{q_i}{p_i} = - \sum_{i=1}^np_i\log q_i + \sum_{i=1}^np_i\log p_i .
	\end{equation*}
Here $\log$ denotes the logarithm of base $e$ and $0\log 0 := 0$. The first sum on the
right-hand side is the cross entropy between $p$ and $q$. This motivated Problem \cite[ Equation (6.1)]{albin2024minimizing} in the context of graphic matroids. Here we generalize it to the following problem.
Let $M(E ,\cI)$ be a matroid with weights $\si \in \R^E_{>0}$ and base family $\cB$. Let $\mathcal{N}$ be the matrix whose rows are the indicator vectors of the bases. Let $\cP(\cB)$ be the set of all probability mass functions (or pmfs) on $\cB$. Now, we consider the {\it Minimum Kullback--Leibler divergence} problem ($\MKL_{\si}(\cB)$):
\begin{equation}\label{eq:KL-beta}
	\begin{split}
		\underset{\eta,\mu}{\text{minimize}}\quad&-\sum_{e\in E}\si(e)\log\eta(e)\\
		\text{subject to}\quad&  \eta = \cN^T\mu,\\
		&\eta(e)>0 \quad \forall e\in E,\\
		& \mu\in\mathcal{P}(\cB).
	\end{split}
\end{equation}
Since the objective function in~\eqref{eq:KL-beta} diverges to $+\infty$ as any $\eta(e)$ approaches $0$, any choice of $(\eta,\mu)$ sufficiently close to the infimum lies inside a compact set and, therefore, a minimizer exists.  In fact, by the strict convexity of the objective function, the optimal $\eta$, denoted by $\eta^*$, is unique.
\begin{remark}
	Since each row sum of $\cN$ equals to $r(E)$, we have $\eta(E) = r(E).$
\end{remark}
\begin{remark}
 We recall the Gibbs's inequality for Kullback--Leibler divergence:
\begin{align*}
	-\sum_{e\in E}\si(e)\log\eta(e) &= -\sum_{e\in E}\si(e)\log\left(\frac{\eta(e)}{\eta(E)} \right)  -\sum_{e\in E}\si(e)\log\eta(E)\\
	&=\si(E)\left(-\sum_{e\in E}\frac{\si(e)}{\si(E)}\log\left(\frac{\eta(e)}{\eta(E)} \right)  \right) - \si(E)\log r(M)\\
	& \geq \si(E)\left(-\sum_{e\in E}\frac{\si(e)}{\si(E)}\log\left(\frac{\si(e)}{\si(E)} \right)  \right) - \si(E)\log r(M).
\end{align*}
	Hence, if there exists a vector $\eta \in \co(\cB)$ such that it is parallel to $\si$, meaning that $\eta = t\si$ for some $t>0$, it follows that this vector must be the unique optimal solution of $\MKL_{\si}$.
\end{remark}

The main theorem in this section is presented as follows.

\begin{theorem}\label{thm:mainmkl}
	Let $M(E ,\cI)$ be a matroid with weights $\si \in \R^E_{>0}$ and the base family $\cB$. Let $\eta^*$ be the unique optimal solution for the $\MKL_\si(\cB)$ problem. Then, $\eta^*$ equals to the universal density of $M$.
\end{theorem}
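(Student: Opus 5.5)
The plan is to compare the first-order optimality conditions of the two convex problems over the base polytope $\co(\cB)$. The point is that both gradients, evaluated at the same point $\eta$, are strictly increasing functions of the single vector $x:=\si^{-1}\eta$. Linear optimization over a matroid base polytope only sees the weak order of the weight vector, so the two conditions will coincide.

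\textbf{Step 1 (variational characterization).} For a differentiable convex $f$ on an open set containing the relevant part of $\co(\cB)$, a point $\eta\in\co(\cB)$ minimizes $f$ iff $\langle \nabla f(\eta),\ones_B-\eta\rangle\ge 0$ for all $B\in\cB$. Equivalently, $\eta$ lies in the face $F_w:=\co\{\ones_B : B \text{ minimizes } \langle w,\ones_B\rangle\}$ with $w=\nabla f(\eta)$. This is because $\langle w,\eta\rangle$ is a convex combination of the values $\langle w,\ones_B\rangle$, so it is at most every $\langle w,\ones_B\rangle$ iff every base in the support is a minimizer.

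\textbf{Step 2 (the two gradients).} For the weighted $2$-modulus objective $\sum_e\si^{-1}(e)\eta(e)^2$ the gradient is $w_2=2\si^{-1}\eta$. For $\MKL_\si$ it is $w_{KL}(e)=-\si(e)/\eta(e)=-1/x(e)$. I will use that the universal density is strictly positive: from (\ref{eq:pre}), $s_1=1/D_\si(M)>0$. Hence $-\sum\si\log\eta$ is finite and differentiable near $\eta^*_M$, and $w_{KL}$ is defined there.

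\textbf{Step 3 (same minimizing bases).} Both $t\mapsto 2t$ and $t\mapsto -1/t$ are strictly increasing on $(0,\infty)$. So $w_2$ and $w_{KL}$ at $\eta^*_M$ induce the same weak order on $E$, with the same level sets $A_1,\dots,A_k$ in the same order. By the greedy algorithm, the bases minimizing $\langle w,\ones_B\rangle$ are exactly those $B$ with $|B\cap(E-E_{i})|=r(E-E_{i})$ for every $i$. This condition depends only on the ordered level sets, not on the actual values of $w$. Therefore $F_{w_2}=F_{w_{KL}}$. I expect this to be the step needing the most care: one must verify that the min-weight bases are characterized by these rank conditions on the nested sets $E-E_i$ for every weight vector with the given ordered level sets. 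This is a standard exchange argument: a base that is not greedy on some prefix can be improved by a strict exchange.

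\textbf{Step 4 (conclusion).} Since $\eta^*_M$ is optimal for the $2$-modulus problem (\ref{eq:2norm-w}), Step 1 gives $\eta^*_M\in F_{w_2}=F_{w_{KL}}$. By Step 1 again, together with convexity of the $\MKL$ objective, $\eta^*_M$ minimizes $-\sum\si\log\eta$ over $\co(\cB)$. It satisfies $\eta^*_M>0$, and it has the form $\cN^T\mu$ for some pmf $\mu$ since it lies in $\co(\cB)$, so it is feasible for (\ref{eq:KL-beta}). Strict convexity, already noted after (\ref{eq:KL-beta}), makes the $\MKL$ minimizer unique. Hence it equals $\eta^*_M$.
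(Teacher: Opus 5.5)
Your proof is correct, and it runs in the opposite direction from the paper's. All three of the paper's proofs start from the $\MKL$ minimizer and show that it is the universal density:
\begin{itemize}
\item Proof 1 derives $\ell_v(B)\le\si(E)$, with equality on the support of $\mu^*$. It then identifies the set where $\si/\eta^*$ is maximal with the core and the maximum with $D_\si(M)$, and contracts the core inductively using the serial rule for $\MKL$.
\item Proofs 2 and 3 verify Fujishige's characterizations of the lexicographically optimal base, namely $\eta^*(S_i)=r(S_i)$ and $\dep(\eta^*,e)\subseteq S_i$ respectively, via an exchange argument and a directional derivative.
\end{itemize}
You instead start from the $2$-modulus minimizer $\eta^*_M$ and verify the variational inequality for the $\MKL$ objective. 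Your key observation is that the set of $w$-minimizing bases depends only on the chain of lower level sets of $w$.

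That step is easier than an exchange argument. Let $w_1<\dots<w_k$ be the distinct values of $w$ and $L_j=\{w\le w_j\}$. Then $\langle w,\ones_B\rangle=w_k\,r(E)-\sum_{j<k}(w_{j+1}-w_j)\,|B\cap L_j|$. A base containing a basis of every $L_j$ exists, obtained by extending successively along the chain. Hence the minimizers are exactly the bases with $|B\cap L_j|=r(L_j)$ for all $j$.

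Your phrase ``open set containing the relevant part'' in Step 1 is vague but harmless. Sufficiency only needs the gradient inequality at $\eta^*_M>0$, and points of $\co(\cB)$ with a zero coordinate are infeasible anyway.

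What each route buys:
\begin{itemize}
\item Your route bypasses Fujishige's theorem, the dependence function, and the contraction/serial-rule machinery. Since you only used that $t\mapsto -1/t$ and $t\mapsto 2t$ are strictly increasing, the same argument shows at once that $\eta^*_M$ minimizes every separable objective $\sum_e\si(e)\phi(\eta(e)/\si(e))$ with $\phi$ differentiable and $\phi'$ strictly increasing on $(0,\infty)$.
\item The paper's Proof 1 yields extra structural information about the $\MKL$ optimum itself, which your argument does not produce directly: the inequality $\sum_{e\in B}\si(e)/\eta^*(e)\le\si(E)$ with equality on supporting bases, and the identification of $\max_e\si(e)/\eta^*(e)$ with $D_\si(M)$, with the core as its argmax.
\item Proofs 2 and 3 tie the result explicitly to Fujishige's lexicographic optimality.
\end{itemize}
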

Below, we provide three different proofs for this theorem. The first proof uses the serial rule for the $\MKL$ problem, analogous to the serial rule for modulus. The other two proofs use two characterizations of the universal density.

\subsection{Proof 1 of Theorem \ref{thm:mainmkl}}
\subsubsection{The $\MKL$ problem and the core of $M$}
Let $v\in\mathbb{R}^E_{\ge 0}$ and let $B$ be a base in $\cB$, the \emph{$v$-length} of $B$ is defined as $	\ell_v(B) := \sum_{e\in B}v(e).$ First, we give the following lemma.
\begin{lemma}\label{lem:necessary-KL-beta}
	Suppose that $(\eta^*,\mu^*)$ is a minimizer for the $\MKL_{\si}$ problem (\ref{eq:KL-beta}).  Define a vector $v\in\mathbb{R}^E_{\ge 0}$ where $v(e) = \si(e)/\eta^*(e)$ for $e \in E$. Then, for all $B\in\cB$,
	\begin{equation*}
		\ell_v(B) \le \si(E).
	\end{equation*}
	If $\mu^*(B)>0$, then equality holds.
\end{lemma}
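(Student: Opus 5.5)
The plan is to use the first-order optimality condition for the convex problem $\MKL_\si$ over the polytope $\co(\cB)$. The objective is $f(\eta)=-\sum_{e\in E}\si(e)\log\eta(e)$. It is convex and differentiable on the open set $\{\eta>0\}$, with gradient $\nabla f(\eta)(e)=-\si(e)/\eta(e)$. Since $\eta^*>0$ and $\co(\cB)$ is convex, for every $\eta\in\co(\cB)$ and $\ep\in(0,1]$ the point $\eta^*+\ep(\eta-\eta^*)$ lies in $\co(\cB)$ and is strictly positive. Minimality then gives the variational inequality
\[
\langle \nabla f(\eta^*),\,\eta-\eta^*\rangle\ \ge 0 ,
\]
and this is the key relation.

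Next I take $\eta=\ones_B$ for an arbitrary base $B\in\cB$. The inequality becomes $-\sum_{e\in B}v(e)+\sum_{e\in E}v(e)\eta^*(e)\ge 0$, where $v=\si/\eta^*$. The second sum equals $\sum_{e}\si(e)=\si(E)$. Hence $\ell_v(B)\le \si(E)$ for every base $B$.

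For the equality claim, I average over $\mu^*$. Since $\eta^*=\cN^T\mu^*$,
\[
\sum_{B\in\cB}\mu^*(B)\,\ell_v(B)=\sum_{e\in E}v(e)\eta^*(e)=\si(E).
\]
So a convex combination of the numbers $\ell_v(B)$, each at most $\si(E)$, equals $\si(E)$. Therefore every $B$ with $\mu^*(B)>0$ must satisfy $\ell_v(B)=\si(E)$.

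The only step needing care is justifying the directional-derivative inequality at the boundary-free optimum. I would handle it directly: $g(\ep)=f(\eta^*+\ep(\eta-\eta^*))$ is finite and differentiable on $[0,1]$ because all coordinates stay positive for $\ep<1$. One caveat is that $\ones_B$ has zero coordinates, so $g(1)$ may be $+\infty$. This does not matter, since only $g'(0^+)\ge 0$ is used, and that follows from $g(\ep)\ge g(0)$ for small $\ep$.
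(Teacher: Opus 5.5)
Your proof is correct. The inequality part matches the paper's: perturbing $\eta^*$ toward $\ones_B$ is exactly the paper's perturbation $\mu_t=(1-t)\mu^*+t\delta_B$ with $t>0$, and both arguments compute the one-sided derivative $\si(E)-\ell_v(B)$.

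You differ on the equality claim. The paper observes that when $\mu^*(B)>0$ the perturbation is also feasible for small negative $t$, because $\mu_t\ge 0$ and $\eta_t>0$ persist. Two-sided stationarity then forces the derivative to vanish, base by base.

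You instead use the averaging identity $\sum_{B}\mu^*(B)\ell_v(B)=\sum_e v(e)\eta^*(e)=\si(E)$, which is a complementary-slackness argument. A $\mu^*$-weighted average of numbers bounded by $\si(E)$ equals $\si(E)$, so every supported term must equal it. This route only needs the one-sided perturbation, which is feasible for every base. The identity itself is free from $\eta^*=\cN^T\mu^*$, so you never have to check feasibility for negative $t$. The paper's route is more local: it gets equality for each supported base from its own two-sided perturbation, without reference to the other bases.

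One small wording fix. The point $\eta^*+\ep(\ones_B-\eta^*)$ is strictly positive only for $\ep\in[0,1)$, not for $\ep\in(0,1]$ or on $[0,1]$, as your own caveat acknowledges. This does not affect the argument, since only $\ep\to 0^+$ is used.
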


\begin{proof}
	Let $B\in\cB$ and let $\delta_B\in\mathcal{P}(\cB)$ be the Dirac mass on $B$.  For the real parameter $t$, define
	\begin{equation*}
		\mu_t = (1-t)\mu^* + t\delta_B.
	\end{equation*}
	Defining $\eta_t := \cN^T\mu_t$ yields
	\begin{equation*}
		\eta_t(e) = (1-t)\eta^*(e) + t\mathcal{N}(B,e).
	\end{equation*}
	By construction, $\mu_t$ is a pmf in $\cP(\cB)$ for any choice of $t$.  Therefore, $(\eta_t,\mu_t)$ is feasible for~\eqref{eq:KL-beta} as long as $\mu_t\ge 0$ and $\eta_t>0$.  Since $\eta^*>0$ by assumption, $\eta_t>0$ for all $t$ in a neighborhood of $0$.
	Note that
	\begin{equation*}
		\log\eta_t(e) = \log \eta^*(e) + \log \left(1-t + t \frac{\mathcal{N}(B,e)}{\eta^*(e)} \right) =  \log\eta^*(e) - t\left(1-\frac{\mathcal{N}(B,e)}{\eta^*(e)}\right) + O(t^2),
	\end{equation*}
	so
	\begin{equation*}
		-\sum_{e\in E}\si(e)\log\eta_t(e) = -\sum_{e\in E}\si(e)\log\eta^*(e)
		+ t(\si(E)-\ell_v(B)) + O(t^2).
	\end{equation*}
	If $\mu^*(B)>0$, then $(\eta_t,\mu_t)$ is feasible for $t$ in a small neighborhood of $0$, implying that $\ell_v(B)=\si(E)$.  If $\mu^*(B)=0$, then $(\eta_t,\mu_t)$ is feasible for small positive $t$, implying the inequality.
\end{proof} 
The following lemma provides the relationship between the core of $M$ and the unique optimal solution $\eta^*$ of $\MKL_\si(\cB)$.

\begin{proposition}\label{thm:vmax}
	Let $\eta^*$, $\mu^*$ and $v$ be defined as in Lemma~\ref{lem:necessary-KL-beta}.  Denote $v_{max}=\max_e v(e)$
	and let $V_{max}$ be the set of elements where $v(e)$ attains this maximum. Let $D_{\si}(M)$ be the fractional arboricity of $M$. We have the following: 
	
	\bi
	\item[(i)] \[\eta^*(V_{max}) = r(V_{max}) ;\] 
	\item[(ii)] \[v_{max} = \frac{\si(V_{max})}{r(V_{max})};\]
	\item[(iii)] $V_{max}$ is the unique maximal optimal set for $D_\si(M)$ and \[v_{max} = D_\si(M).\]
	\ei
\end{proposition}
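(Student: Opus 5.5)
The plan is to extract everything from the optimality conditions of Lemma~\ref{lem:necessary-KL-beta} by a base-exchange argument. Fix $B^*$ with $\mu^*(B^*)>0$, so $\ell_v(B^*)=\si(E)$ and $\ell_v(B)\le \si(E)$ for every base. The first step is to show that every base in the support of $\mu^*$ meets $V_{max}$ in a maximal independent subset of $V_{max}$, i.e. $|B\cap V_{max}|=r(V_{max})$. Suppose not: for $B$ with $\mu^*(B)>0$, there is $f\in V_{max}-B$ with $(B\cap V_{max})+f$ independent. Extend $(B\cap V_{max})+f$ to a base inside $B+f$. This gives $B'=B+f-g$ for some $g\in B-V_{max}$, because the unique circuit in $B+f$ must contain an element outside $V_{max}$. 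Then $v(g)<v_{max}=v(f)$, so $\ell_v(B')>\ell_v(B)=\si(E)$, contradicting the lemma.

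Summing over the support then gives (i):
\[
\eta^*(V_{max})=\sum_B\mu^*(B)\,|B\cap V_{max}|=r(V_{max}).
\]
For (ii), note that $\eta^*(e)=\si(e)/v_{max}$ on $V_{max}$. Hence $\si(V_{max})=v_{max}\,\eta^*(V_{max})=v_{max}\,r(V_{max})$.

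For (iii), take any $X$ with $r(X)>0$. Since $\eta^*\in\co(\cB)$, every base contains at most $r(X)$ elements of $X$, so $\eta^*(X)\le r(X)$. Also $\si(e)=v(e)\eta^*(e)\le v_{max}\eta^*(e)$. Together these give
\[
\si(X)\le v_{max}\,\eta^*(X)\le v_{max}\,r(X),
\]
so $D_\si(M)\le v_{max}$, with equality attained at $X=V_{max}$ by (ii).

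For uniqueness and maximality, suppose $X$ is optimal. Then both inequalities above are equalities. Because $\eta^*>0$ and $v(e)<v_{max}$ off $V_{max}$, the first equality forces $X\subseteq V_{max}$. So $V_{max}$ contains every optimal set and is itself optimal, which makes it the unique maximal optimal set.

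The main obstacle I expect is the exchange step: producing the improving base $B'$ with $g\notin V_{max}$. It has to be done carefully via the fundamental circuit of $f$ in $B$. That circuit cannot lie entirely inside $(B\cap V_{max})+f$, because that set is independent, so it contains some $g\in B-V_{max}$, and swapping $g$ for $f$ preserves being a base. Everything after that is bookkeeping.
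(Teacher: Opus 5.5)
Your proof is correct and follows the paper's argument essentially step for step. Part (i) uses the same exchange $B'=B+f-g$ with $g\notin V_{max}$, contradicting $\ell_v(B')\le \si(E)$; part (ii) is the same computation; part (iii) uses the same chain $\si(X)\le v_{max}\,\eta^*(X)\le v_{max}\,r(X)$, with strictness for sets not contained in $V_{max}$. The only difference is that you justify the exchange via the fundamental circuit and the bound $\eta^*(X)\le r(X)$ directly, where the paper cites Lemma~3.12 and the proof of Theorem~4.2 from \cite{truong2024modulus}.
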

\begin{proof}
To prove (i), by \cite[Lemma 3.12]{truong2024modulus}, we have
\begin{equation}\label{eq:lemma3.12}
r(V_{max}) \geq \eta^*(V_{max}),
\end{equation} 
and the equality holds, if and only if, every base $B \in \supp \mu^*$ satisfies $r(V_{max}) = |B \cap V_{max}|$. Suppose that  a base $B$ satisfies $|B \cap V_{max}| < r(V_{max})$. Then, by the proof of Part 1 in \cite[Theorem 4.2]{truong2024modulus}, there exist $z \in V_{max} \setminus B$ and $x \in B \setminus V_{max}$ such that $B'=(B\cup\{z\})\setminus\{x\}$ is a base.  Then,
	\begin{equation*}
		\si(E) \ge \ell_v(B') = \ell_v(B) + v(z) - v(x) > \ell_v(B).
	\end{equation*}
	By Lemma~\ref{lem:necessary-KL-beta}, we have $\mu^*(B)=0$. Therefore, (i) holds.

	For (ii), by the definition of $V_{max}$, we have
	\begin{align*}
		v_{max}\eta^*(V_{max}) = v_{max}\sum\limits_{e\in V_{max}} \eta^*(e) = &\sum\limits_{e\in V_{max}} \si(e) = \si(V_{max}).\\
	\end{align*}
Hence, by (i)
	\[v_{max} = \frac{\si(V_{max})}{\eta^*(V_{max})} = \frac{\si(V_{max})}{r(V_{max})}.\]
	
For (iii), consider any nonempty set $Z \subseteq E$ with $r(Z) > 0$, we have
	\begin{align*}
		v_{max}\eta^*(Z) = \sum\limits_{e\in Z} v_{max}\eta^*(e) \geq &\sum\limits_{e\in Z} \si(e) = \si(Z).\\
	\end{align*}
	Then, by (\ref{eq:lemma3.12}),
	\[v_{max} \geq \frac{\si(Z)}{\eta^*(Z)} \geq
	\frac{\si(Z)}{r(Z)}.\]
	Therefore, the fractional arboricity problem $D_\si(M)$ reaches its maximum at the set $V_{max}$. Note that $D_\si(M)$ attains its maximum at a unique maximal optimal set. If $Z-V_{max} \neq \emptyset$, then
	\begin{align*}
		v_{max}\eta^*(Z) = \sum\limits_{e\in Z} v_{max}\eta^*(e) > &\sum\limits_{e\in Z} \si(e) = \si(Z),\\
	\end{align*}
	and, by (\ref{eq:lemma3.12}),
	\[v_{max} > \frac{\si(Z)}{\eta^*(Z)} \geq
	\frac{\si(Z)}{r(Z)}.\]
	Hence, $Z$ is not optimal for $D_{\si}(M)$. Therefore, $V_{max}$ is the unique maximal optimal set for $D_\si(M)$.
\end{proof}

Proposition \ref{thm:vmax} shows that  $V_{max}$, defined as in Proposition \ref{thm:vmax}, is the core of $M$ with weights $\si$. Now, we provide the first proof of Theorem \ref{thm:mainmkl}. In the proof, we use the serial rule for the $\MKL$ problem, see Theorem \ref{thm:serialMKL} in the appendix, to show that the unique optimal solution $\eta^*$ of $\MKL_\si(\cB)$ preserves after contracting the core of $M$.

\begin{proof}[Proof 1 for Theorem \ref{thm:mainmkl}]
Let $\eta^*$, $\mu^*$ and $V_{max}$ be defined as in Proposition \ref{thm:vmax}.
By Theorem \cite[Theorem 3.17]{truong2024modulus}, we have
	\begin{equation*}
		\cB(M \setminus (E-V_{max})) \oplus \cB(M / V_{max}) = \cB',
	\end{equation*} 
	where $\cB'$ is the set of bases $B \in \cB$ that satisfy $r(V_{max}) = |B \cap V_{max}|$. 
	By Proof of Proposition \ref{thm:vmax}, we know that $r(V_{max}) = \eta^*(V_{max})$, and every base $B \in \supp \mu^*$ satisfies $r(V_{max}) = |B \cap V_{max}|$. Consequently, the problem $\MKL_{\si}(\cB(M))$ can be restricted to $\cB'$, and hence $\MKL_{\si}(\cB(M))=\MKL_{\si}(\cB')$. Apply Theorem \ref{thm:serialMKL} for the families $\Ga_1=\cB(M \setminus (E-V_{max})) $ and $\Ga_2=\cB(M / V_{max}) $, we obtain that the restriction of $\eta^*$ onto $E - V_{max}$ is optimal $\MKL_{\si}(\cB(M /V_{max}))$ and the value of $\eta^*(e)$ is equal to $D(M)$ for $e \in V_{max}$. Therefore, by contracting $V_{max}$ inductively, the proof is completed.
\end{proof}

\begin{remark}
	Let $\eta^*$ be the universal density of $M$ with weights $\si$. By Theorem \ref{thm:mainmkl} and Lemma \ref{lem:necessary-KL-beta}, we have
	\begin{equation*}
		\sum\limits_{e\in B}\frac{\si(e)}{\eta^*(e)} \le \si(E) \qquad \text{ for all } B\in\cB,
	\end{equation*}
	and equality holds if $B$ is supported by an optimal pmf inducing $\eta^*$.
\end{remark}

\subsection{Proof 2 and 3 of Theorem \ref{thm:mainmkl}}

We first recall some notations from \cite{fujishige1980lexicographically}. Define the {\it dependence function} $\dep$ from $\co(\cB) \times E$ into $2^E$ as follows: For an vector $x \in \co(\cB)$, $\dep(x, u)$ is the set of all elements $v \in E$ such that, for some $\epsilon > 0$, the vector $y \in \mathbb{R}^E$, defined by 
\[
y = x + \epsilon \ones_u - \epsilon \ones_v,
\]
is a vector in $\co(\cB)$. Basically, the set $\dep(x, u)$ consists of all coordinate elements of $x$ from which mass can be transported to $u$ while keeping the vector in the convex hull.

 Note that  $u \subseteq \dep(x,u)$ for any $u \in E$ where $x(u)>0$. The author in \cite{fujishige1980lexicographically} gives the following theorem.
\begin{theorem}\cite{fujishige1980lexicographically}\label{thm:lexi}
Let $x$ be a vector in  $\co(B)$, and let $\si$ be given weights in $\R^{E}_{> 0}$. Define
\[
c(e) = \frac{x(e)}{\si(e)} \quad (e \in E),
\]
and let the distinct values of $c(e)$ for $e \in E$ be given by 
\begin{equation}\label{ci}
c_1 < c_2 < \cdots < c_k.
\end{equation}
Define subsets $S_i \subseteq E$ $(i = 1, 2, \ldots, k)$ by 
\begin{equation}\label{eq:Si}
S_i = \{e \in E \mid c(e) \leq c_i\} \quad (i = 1, 2, \ldots, k).
\end{equation}
Then, the following three conditions are equivalent:
\begin{enumerate}
	\item[(i)] $x$ is the universal density
	 of $M$ with respect to the weight vector $\si$.
	\item[(ii)] $x(S_i) = r(S_i)$ for all $i = 1, 2, \ldots, k$.
	\item[(iii)] $\emptyset \subseteq \dep(x, e) \subseteq S_i$ for all $e \in S_i$ and $i = 1, 2, \ldots, k$.
\end{enumerate}
\end{theorem}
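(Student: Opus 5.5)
Since the result is quoted from \cite{fujishige1980lexicographically}, I would not reprove it from scratch. Instead I would prove the cycle (i)$\Rightarrow$(ii)$\Rightarrow$(iii)$\Rightarrow$(i), using the facts about $\eta^*$ recalled in Section~\ref{sec:mod-base}. The key observation is that the $c_i$ are exactly the values $s_i$ of $\si^{-1}\eta^*$. Hence $S_i=A_1\cup\dots\cup A_i=E-E_{i+1}$, with the sets $A_i$, $E_i$ as in (\ref{eq:Ai}) and (\ref{eq:Ei}).

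\emph{(i)$\Rightarrow$(ii).} I would argue by induction on $i$ along the contractions $M/(E-E_i)$. By (\ref{eq:pre}) and the discussion after it:
\begin{itemize}
\item $A_i$ is the core of $M/(E-E_i)$;
\item the restriction of $\eta^*$ to $E_i$ is the universal density of $M/(E-E_i)$.
\end{itemize}
The core $Z$ of a matroid is the maximal set of minimal value of $\si^{-1}\eta^*$. As in part (i) of Proposition~\ref{thm:vmax}, which uses \cite[Lemma 3.12]{truong2024modulus}, the core satisfies $\eta^*(Z)=r(Z)$. Applied to $M/(E-E_i)$ this gives $\eta^*(A_i)=r(S_{i-1}\cup A_i)-r(S_{i-1})=r(S_i)-r(S_{i-1})$. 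Summing the telescoping identities over $i$ yields $x(S_i)=r(S_i)$ for every $i$.

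\emph{(ii)$\Rightarrow$(iii).} Every $y\in\co(\cB)$ satisfies $y(S)\le r(S)$ for all $S\subseteq E$. Take $u\in S_i$ and $v\notin S_i$, and set $y=x+\ep\ones_u-\ep\ones_v$. Then $y(S_i)=x(S_i)+\ep=r(S_i)+\ep>r(S_i)$, so $y\notin\co(\cB)$ for every $\ep>0$. Hence $v\notin\dep(x,u)$, which gives $\dep(x,u)\subseteq S_i$.

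\emph{(iii)$\Rightarrow$(i).} I would show that $x$ minimizes the strictly convex objective $F(\eta)=\sum_e\si^{-1}(e)\eta(e)^2$ over $\co(\cB)$. Since the minimizer of this problem is unique and equals the universal density by (\ref{eq:2norm-w}), this suffices. For an exchange direction $\ones_u-\ones_v$ with $v\in\dep(x,u)$, the directional derivative of $F$ at $x$ is $2\bigl(c(u)-c(v)\bigr)$. Let $c(u)=c_i$. Then $u\in S_i$, so (iii) gives $v\in S_i$, i.e.\ $c(v)\le c_i=c(u)$, and the derivative is $\ge 0$. Thus no exchange direction is a descent direction.

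The main obstacle is upgrading ``no improving exchange'' to global optimality. For this I would invoke the standard polymatroid fact that the cone of feasible directions of the base polytope at $x$ is generated by the exchange directions $\ones_u-\ones_v$ with $v\in\dep(x,u)$. Equivalently, for any $y\in\co(\cB)$, the vector $y-x$ decomposes as a nonnegative combination of such exchanges, e.g.\ by repeatedly applying the exchange property of bases to a pmf representing $y$. Given this fact, the first-order condition $\langle\nabla F(x),y-x\rangle\ge 0$ holds for all $y\in\co(\cB)$. Convexity of $F$ then yields global optimality, and hence $x=\eta^*$. If that decomposition is awkward to justify, a fallback is to go (iii)$\Rightarrow$(ii) directly. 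Condition (iii) says that no mass can leave $S_i$ via exchanges, which should force $S_i$ to be tight. Combined with Theorem~\ref{thm:mainmkl}-type level-set uniqueness along the chain $S_1\subset\dots\subset S_k$, this would determine $x$ uniquely as $\eta^*$.
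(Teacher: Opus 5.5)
The paper does not prove this statement: it quotes it from \cite{fujishige1980lexicographically}. There, condition (i) is lexicographic optimality of the base, and the identification of that base with the universal density comes from \cite{truong2024modulus}. Your cycle is correct and takes a genuinely different, self-contained route, relative to the facts recalled in Section~\ref{sec:mod-base}. You work directly with the paper's definition of $\eta^*$ as the minimizer of \eqref{eq:2norm-w}:
\begin{itemize}
\item (i)$\Rightarrow$(ii) comes from the contraction structure \eqref{eq:pre};
\item (ii)$\Rightarrow$(iii) comes from the inequalities $y(S)\le r(S)$ on $\co(\cB)$;
\item (iii)$\Rightarrow$(i) comes from first-order optimality of a convex objective.
\end{itemize}
The citation buys brevity. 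Your argument buys independence from the lexicographic-base machinery, and it proves the statement exactly as the paper defines (i).

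There are two things to tighten.

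First, in (i)$\Rightarrow$(ii), do not route through Proposition~\ref{thm:vmax}. It concerns the $\MKL$ minimizer, and transferring it to $\eta^*$ needs Theorem~\ref{thm:mainmkl}, whose Proofs 2 and 3 use the very statement you are proving. Instead, argue directly: $A_i$ attains $D_\si(M/(E-E_i))=1/s_i$ and $\eta^*=s_i\si$ on $A_i$, so $\eta^*(A_i)=s_i\si(A_i)=r_{M/(E-E_i)}(A_i)=r(S_i)-r(S_{i-1})$.

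Second, in (iii)$\Rightarrow$(i), the decomposition of $y-x$ must be built from a pmf $\mu$ representing $x$, not $y$. The reason is that $v\in\dep(x,u)$ is certified by some $B\in\supp\mu$ with $B-v+u\in\cB$: replacing $B$ by $B-v+u$ in $\mu$ shows $x+\mu(B)(\ones_u-\ones_v)\in\co(\cB)$. The decomposition then goes as follows.
\begin{itemize}
\item Take a pmf $\nu$ representing $y$ and set $\pi(B,B')=\mu(B)\nu(B')$. Then $y-x=\sum_{B,B'}\pi(B,B')(\ones_{B'}-\ones_B)$, and only pairs with $B\in\supp\mu$ occur.
\item Apply Brualdi's bijective exchange: there is a bijection $\phi:B\setminus B'\to B'\setminus B$ with $B-e+\phi(e)\in\cB$ for every $e\in B\setminus B'$.
\item This gives $\ones_{B'}-\ones_B=\sum_{e\in B\setminus B'}(\ones_{\phi(e)}-\ones_e)$ with each $e\in\dep(x,\phi(e))$.
\end{itemize}
Walking from $B$ to $B'$ by successive single exchanges would not work, because the intermediate bases need not lie in $\supp\mu$. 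With this fix, your first-order inequality $\langle\nabla F(x),y-x\rangle\ge 0$ holds for every $y\in\co(\cB)$. The fallback via Theorem~\ref{thm:mainmkl} is then unnecessary, and it would be circular if it relied on Proofs 2 or 3.
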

The characterizations of the universal density in (ii) and (iii) of Theorem \ref{thm:lexi} provide the second and third proof of Theorem \ref{thm:mainmkl}.
\begin{proof}[Proof 2 for Theorem \ref{thm:mainmkl}]
	Let $\eta^*$, $\mu^*$ and $v$ be as in Lemma~\ref{lem:necessary-KL-beta} and define $c(e) = 1/v(e)$ for every $e \in E$. We aim to apply Part (ii) of Theorem \ref{thm:lexi} for $\eta^*$. Define $S_i$ as in (\ref{eq:Si}). By \cite[Lemma 3.12]{truong2024modulus}, we have
	$r(S_i) \geq \eta^*(S_i)$ for every $S_i$ and the equality holds if and only if, every base $B \in \supp \mu^*$ satisfies $r(S_i) = |B \cap S_i|$. Suppose that  a base $B$ satisfies $|B \cap S_i| < r(S_i)$. Then by the proof of Part 1 in \cite[Theorem 4.2]{truong2024modulus}, there exist $z \in S_i \setminus B$ and $x \in B \setminus S_i$ such that $B'=(B\cup\{z\})\setminus\{x\}$ is a base.  Then,
	\begin{equation*}
		\si(E) \ge \ell_v(B') = \ell_v(B) + v(z) - v(x)  > \ell_v(B).
	\end{equation*}
	By Lemma~\ref{lem:necessary-KL-beta}, $\mu^*(B)=0$. Therefore, we have $\eta^*(S_i) = r(S_i)$ for all $i = 1, 2, \ldots, k$.
	By  Part (ii) of Theorem \ref{thm:lexi}, the proof is completed.
\end{proof}

\begin{proof}[Proof 3 for Theorem \ref{thm:mainmkl}]
	Let $\eta^*$, $\mu^*$ and $v$ be as in Lemma~\ref{lem:necessary-KL-beta}. We aim to apply Part (iii) of Theorem \ref{thm:lexi} for $\eta^*$. Define $c(e) = 1/v(e)$ for every $e \in E$ and let the distinct values of $c(e)$ for $e \in E$ be given by 
	\[
	c_1 < c_2 < \cdots < c_k.
	\]
Denote $f(\eta)$ the objective function of the $\MKL_{\si}(\cB)$ problem. Then $f$ is differentiable on the relative interior of $\co(\mathcal B)$, and
\begin{equation}\label{eq:deri}
\frac{\partial}{\partial \eta(e)}f(\eta)\bigg|_{\eta =\eta^*} = -\frac{\si(e)}{\eta^*(e)} = -\frac{1}{c(e)}.
\end{equation}
Let $e\in S_i$ for some $i$ and let $e'\in \dep(\eta^*,e)$.
By definition of $\dep(\eta^*,e)$, there exists $\epsilon>0$ such that
\[
\eta^*+t(\ones_e-\ones_{e'})\in \co(\mathcal B)
\qquad\text{for all }t\in[0,\epsilon].
\]
Let $d:=\ones_e-\ones_{e'}$ and define $g:[0,\epsilon]\to\mathbb R$ by
\[
g(t):=f(\eta^*+td).
\]
Then $g$ is differentiable and
\[
g'(0)
=\left.\frac{\partial f}{\partial \eta(e)}\right|_{\eta=\eta^*}
-\left.\frac{\partial f}{\partial \eta(e')}\right|_{\eta=\eta^*}
=-\frac{1}{c(e)}+\frac{1}{c(e')}
=\frac{1}{c(e')}-\frac{1}{c(e)}.
\]
If $c_i<c(e')$, then $c(e)<c_i<c(e')$, hence $g'(0)<0$.
 Therefore, for some $0<\delta<\ep$ small enough, we have
\[
f(\eta^*+\delta\ones_e-\delta\ones_{e'})<f(\eta^*),
\]
contradicting optimality of $\eta^*$.
Hence we must have $c(e')<c_i$.
So, for each $i = 1, 2, \ldots, k$, we have 
\begin{equation}\label{eq:dep}
0 \subseteq \dep(\eta^*, e) \subseteq S_i
\end{equation}
for all $e \in S_i$. 
By Theorem \ref{thm:lexi}, the proof is completed.
\end{proof}


\section{Strictly homogeneous matroid}\label{strictly-hom}

The main purpose of this section is to characterize the class of strictly homogeneous matroids, see Definition \ref{def:s-h}. First, we recall the notion of connected matroids. For a matroid $ M(E,\cI) $, a set $ X \subseteq E $ is called a {\it separator} of $ M $ if any circuit $ C \in \cC(M) $ is contained in either $ X $ or $ E - X $. A matroid $ M(E,\cI) $ is called {\it connected} if it has no separators other than $ E $ and $ \emptyset $. The following theorem is modeled after the case of graphic matroids in \cite{albin2024minimizing}. We state the result as follows.
\begin{theorem}
	Let $M=(E,\cI) $ be a matroid with given weights $\si \in\R^E_{>0}$. 
	Then, $\theta_{\si}(A) < \theta_{\si}(E)$ for every subset $A \subsetneq E$ if and only if $M$ is strictly $\si$-homogeneous and connected, where $\theta_{\si}(A) = \si(A)/r(A).$
\end{theorem}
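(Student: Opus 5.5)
The plan is to translate both sides of the equivalence into statements about the dilated vector $\beta_{\si}$ from (\ref{eq:beta-si}) and the tight sets of the base polytope. Throughout, $A$ ranges over nonempty proper subsets of $E$. Since $M$ has no loops, $r(A)>0$ for every such $A$, so $\theta_\si(A)$ is defined. Directly from (\ref{eq:beta-si}),
\[
\beta_\si(A)=\frac{\si(A)}{\si(E)}\,r(E),
\qquad\text{hence}\qquad
\theta_\si(A)<\theta_\si(E)\iff \beta_\si(A)<r(A).
\]
I will combine this with two standard facts.

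\textbf{Fact 1 (Edmonds).} The polytope $\co(\cB)$ equals $\{x\ge 0: x(A)\le r(A)\ \forall A,\ x(E)=r(E)\}$.

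\textbf{Fact 2.} For $\eta=\cN^T\mu$ with $\mu\in\cP(\cB)$, we have $\eta(A)=\sum_B\mu(B)|B\cap A|\le r(A)$. Equality holds if and only if $|B\cap A|=r(A)$ for every $B\in\supp\mu$.

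I also need a separator lemma: $A$ is a separator if and only if $|B\cap A|=r(A)$ for every base $B$.
\begin{itemize}
\item For the nontrivial direction, extend a basis of $E-A$ to a base $B$. Then $|B\cap A|=r(E)-r(E-A)$, so the condition forces $r(A)+r(E-A)=r(E)$.
\item The latter identity is the standard rank characterization of separators.
\item Conversely, if $r(A)+r(E-A)=r(E)$, then for every base $B$ we have $|B\cap A|\le r(A)$ and $|B-A|\le r(E-A)$. Since these sum to $r(E)$, both are equalities.
\end{itemize}

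\textbf{($\Rightarrow$).} Assume $\theta_\si(A)<\theta_\si(E)$ for all proper $A$.

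First, $\beta_\si(A)<r(A)$ for all proper $A$ and $\beta_\si(E)=r(E)$, so $\beta_\si\in\co(\cB)$ by Fact 1.

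Next I show $\beta_\si$ lies in the relative interior of $\co(\cB)$. The only valid equalities on $\co(\cB)$ are those forced for all bases, and no proper $A$ is tight at $\beta_\si$, since $\beta_\si(A)<r(A)$. So the minimal face of $\co(\cB)$ containing $\beta_\si$ is $\co(\cB)$ itself, i.e. $\beta_\si$ is in its relative interior.

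Hence $\beta_\si$ can be written as a convex combination of all vertices with strictly positive coefficients. Concretely, take any $\mu_0\in\cU_{\beta_\si}$ and mix in a small multiple of each $\ones_B$. This is compensated by moving in the direction $\beta_\si-\ones_B$, which is allowed because $\beta_\si$ is relatively interior. Thus $\cB_{\beta_\si}=\cB$.

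Theorem \ref{thm:supp} then gives $\la$ with $\eta_\la=\beta_\si$, so $M$ is strictly $\si$-homogeneous.

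For connectedness, suppose $A$ is a proper nonempty separator. Then $r(A)+r(E-A)=r(E)$, and both $A$ and $E-A$ are proper, so
\[
\si(E)=\si(A)+\si(E-A)<\theta_\si(E)\bigl(r(A)+r(E-A)\bigr)=\si(E),
\]
a contradiction.

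\textbf{($\Leftarrow$).} Strict $\si$-homogeneity gives $\la>0$ with $\eta_\la=k\si$. Since $\eta_\la(E)=r(E)$, this forces $\eta_\la=\beta_\si$. Moreover $\mu_\la$ has full support $\cB$, since $\la>0$.

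By Fact 2, $\beta_\si(A)\le r(A)$, with equality only if every base satisfies $|B\cap A|=r(A)$. By the separator lemma, that would make $A$ a separator. Connectedness excludes this for proper nonempty $A$, so $\beta_\si(A)<r(A)$, which is exactly $\theta_\si(A)<\theta_\si(E)$.

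\textbf{Main obstacle.} The delicate step is the relative-interior argument in ($\Rightarrow$), namely that absence of proper tight sets implies $\cB_{\beta_\si}=\cB$. I would justify it via the face structure of the base polytope: faces are cut out by tight sets $x(A)=r(A)$ (Edmonds), so a point with no proper tight sets lies in no proper face. Alternatively, one can perturb a representing pmf directly along each vertex direction, as sketched above.
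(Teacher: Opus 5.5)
Your proof is correct. For the substantive step, strict homogeneity in $(\Rightarrow)$, you take a different route from the paper.

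\textbf{Strict homogeneity.} The paper does not use Edmonds' description of $\co(\cB)$ or Theorem~\ref{thm:supp}. It obtains $\si$-homogeneity from $\theta_\si(M)=D_\si(M)$ and then argues by contradiction through problem \eqref{eq:min-det}. If no minimizer exists, a minimizing sequence $\la_k$ degenerates on a set $E_0$. The exchange property then forces every base with $|B\cap H|<r(H)$, where $H=E-E_0$, to receive zero mass in the limit $\mu^*$. Hence $\eta^*(H)=r(H)$, i.e.\ $\theta_\si(H)=\theta_\si(E)$ for a proper $H$.

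You instead note that the hypothesis says exactly that $\beta_\si$ satisfies every non-implicit inequality of Edmonds' system strictly. So $\beta_\si$ lies in the relative interior of $\co(\cB)$, every base appears in the support of some $\mu\in\cU_{\beta_\si}$, and Theorem~\ref{thm:supp} finishes.

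\textbf{What each route buys.} Your version is shorter and makes the geometry visible: the strict inequalities encode relative interiority together with the absence of separators, and you prove connectedness separately by a direct rank computation. It does treat Theorem~\ref{thm:supp} as a black box; the paper states that result without proof, by analogy with the graph case. The paper's degeneration argument is essentially the proof of the needed direction of that theorem, specialized to $\beta_\si$. It also shows concretely how the failure of a minimizer produces a tight proper set.

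\textbf{Connectedness and $(\Leftarrow)$.} For connectedness you replace the paper's appeal to the serial rule with the two-line computation using $r(A)+r(E-A)=r(E)$. In $(\Leftarrow)$, your separator lemma plays the role of the paper's exchange pair $B_1,B_2$ from \cite[Lemma 6.7]{truong2024modulus}. Both express the same fact: in a connected matroid, some base meets a proper $A$ in fewer than $r(A)$ elements.

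\textbf{Two small points to tidy.} First, in the relative-interior step, also note that no constraint $x(e)\ge 0$ is tight, since $\beta_\si>0$. Second, state the support argument directly. For each base $B$ and small $\epsilon>0$, set $y_B=\beta_\si+\epsilon(\beta_\si-\ones_B)$, which lies in $\co(\cB)$. Then $\beta_\si=\tfrac{1}{1+\epsilon}y_B+\tfrac{\epsilon}{1+\epsilon}\ones_B$, which places $B$ in $\cB_{\beta_\si}$.
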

\begin{proof}
	First, we assume that $M$ is strictly $\si$-homogeneous and connected.  It follows that $M$ is $\si$-homogeneous. Therefore, the dilation $	\beta_{\si}$ of $\si$ defined in (\ref{eq:beta-si}) satisfies $\beta_{\si} \in \co(\cB)$, and $\theta_{\si}(A) \leq \theta_{\si}(E)$ for any $A \subset E$ (since $\theta_\si(M)=D_\si(M)$). Let $\eta^*$ be the universal density of $M$ with weights $\si$. By assumption of strict homogeneity and Definition \ref{def:s-h}, there exist weights $\la\in\R_{>0}^E$ such that $\eta^*=\eta_{\la}= \cN^T{\mu_\la}$, where $\mu_\la$ and $\eta_{\la}$ are defined in (\ref{eq:mu-la}) and (\ref{eq:peredgeeffres}), and this vector is parallel with $\si$.
	 Note that  $\mu_\la(B)>0$ for every base $B \in \cB$. Suppose that $\theta_{\si}(A) = \theta_{\si}(E)$ for some $A \subsetneq E$. It follows that $\eta^*(A)=r(A)$ by the proof of Part 3 in Proposition \ref{thm:vmax}. On the other hand, since $M$ is connected, by \cite[Lemma 6.7]{truong2024modulus}, there exists $e \in A$, $e' \in E- A$ and two bases $B_1$ and $B_2$ such that \[B_1 -B_2 = \lbr e \rbr \quad \text{ and } \quad B_2 -B_1 = \lbr e' \rbr.\] Thus, $|B_2 \cap A| < |B_1 \cap A| = r(A)$, which implies that $\mu_{\la}(B_2)$=0 because of the fact $\eta^*(A)=r(A)$ and \cite[Lemma 3.12]{truong2024modulus}, contradiction. Hence, we have $\theta_{\si}(A) < \theta_{\si}(E)$ for every subset $A \subsetneq E$.
	
	\hspace{1cm}

	Next, assume that  $\theta_{\si}(A) <\theta_{\si}(E)$ for every $A \subsetneq E$. Hence, we have $\theta_{\si}(M)=D_{\si}(M)$ and it implies that  $M$ is $\si$-homogeneous. Now we want to show that $M$ is connected. If $M$ is not connected, by applying the serial rule for the base modulus to the connected components of $M$, any nontrivial separator $X$ of $M$ would satisfy $\theta_{\si}(X) = \theta_{\si}(E)$ since $M$ is $\si$-homogeneous, contradiction. The rest of the task is to prove that $M$  is strictly $\si$-homogeneous. 
	
	Let $\eta^*$ be the universal density of $M$ with weights $\si$. Suppose that $M $ is not strictly $\si$-homogeneous. By Remark \ref{rem:min-strict}, the problem (\ref{eq:min-det}) has no minimizer. By extracting a subsequence of a minimizing sequence, we may assume that there exists a constant $L>0$ and a set $E_0\subset E$ with the property that $\la_k(e)\to 0$ for all $e\in E_0$, and $\la_k(e)\ge L$ for all $k$ and $e\notin E_0$. It follows from the proof of \cite[Lemma 5.1]{albin2024minimizing} that $E_0\ne\emptyset$ since the problem (\ref{eq:min-det}) has no minimizer.  Moreover, since $\prod_e\la_k(e)^{\si(e)}=1$, we have $E_0\ne E$.  By Theorem \ref{thm:main-mindet}, we know that $\mu_{\la_k}\to\mu^*$ (element-wise convergence), where $\mu^*$ is the unique maximum-entropy pmf in $\mathcal{U}_{\beta_{\si}}$, where $\mathcal{U}_{\beta_{\si}}$ is defined in (\ref{eq:cone_definition}). 
	
	Now, we choose $H = E-E_0$. By \cite[Lemma 3.12]{truong2024modulus}, we have that $r(H) \geq \eta^*(H)$ and the equality holds if and only if, every base $B \in \supp \mu^*$ satisfies $r(H) = |B \cap H|$. Suppose that there is a base $B$ satisfying $|B \cap H| < r(H)$. By the proof of Part 1 in \cite[Theorem 4.2]{truong2024modulus}, there exist $z \in H \setminus B$ and $x \in B \setminus H$ such that $B'=(B\cup\{z\})\setminus\{x\}$ is a base.  Thus,
	
	\begin{equation*}
		\mu_{\la_k}(B) = \frac{\la_k(x)}{\la_k(z)}\mu_{\la_k}(B') \le \frac{\la_k(x)}{L} \text{ for all } k,
	\end{equation*}
	which implies that
	\begin{equation*}
		\mu^*(B) = \lim_{k\to\infty}\mu_{\la_k}(B) = 0.
	\end{equation*}
	Therefore, we obtain that $r(H) = \eta^*(H)$ and \[ \theta_{\si}(H) = \frac{1}{r(H)}\si(H) = \frac{1}{r(H)}\frac{\eta^*(H)\si(E)}{r(E)} = \frac{\si(E)}{r(E)} = \theta_{\si}(E).\]
	This is a contradiction. The proof is completed.
\end{proof}

The next theorem provides the uniqueness of $\lambda$ in Definition \ref{def:s-h} of strict homogeneity.

	\begin{theorem}
		Let $M=(E,\cI)$ be a connected matroid with base family $\cB$.  Let $\beta$ be a vector in $\co^+(\cB)$. Then there exists a vector $\lambda \in \R^E_{>0}$, unique up to independent positive rescalings on each connected component of $M$, such that $\eta_{\lambda}=\beta$, where $\eta_{\lambda}$ be defined as in (\ref{eq:peredgeeffres}). In other words, if $\lambda$ satisfies $\eta_{\lambda}=\beta$, then any $\tilde\lambda$ obtained by multiplying all weights in each connected component $C$ by a constant $\lambda_C>0$ also satisfies $\eta_{\tilde\lambda}=\beta$, and these are the only such vectors.

	\end{theorem}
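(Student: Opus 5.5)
Existence is immediate from Theorem \ref{thm:supp}: since $\beta\in\co^+(\cB)$ we have $\cB_\beta=\cB$, so some $\la\in\R^E_{>0}$ satisfies $\eta_\la=\beta$. Invariance under componentwise rescaling is also direct. If $E=C_1\cup\dots\cup C_m$ is the decomposition into connected components, every base has the form $B=B_1\cup\dots\cup B_m$ with $B_j\in\cB(M|C_j)$ and $|B\cap C_j|=r(C_j)$. Multiplying $\la$ by $\la_{C_j}$ on $C_j$ therefore multiplies $\la[B]$ by $\prod_j\la_{C_j}^{r(C_j)}$, which is the same factor for every $B$. Hence $\mu_{\tilde\la}=\mu_\la$ and $\eta_{\tilde\la}=\eta_\la$. (When $M$ is connected this is just a global rescaling.) The real content is uniqueness.

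For uniqueness I will use the convex-duality view behind Theorem \ref{thm:main-mindet}. Write $\la=e^{x}$ with $x\in\R^E$ and set
\[
F(x):=\log\sum_{B\in\cB}e^{\langle x,\ones_B\rangle}-\langle x,\beta\rangle .
\]
This is the log-partition function of the exponential family $\mu_{e^x}$, minus a linear term. Its gradient is $\nabla F(x)=\cN^T\mu_{e^x}-\beta=\eta_{e^x}-\beta$. So $\eta_\la=\beta$ holds exactly when $x=\log\la$ is a critical point of the convex function $F$, and the set of solutions is the (convex) set of minimizers of $F$.

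The Hessian of $F$ at $x$ is the covariance matrix of the random vector $\ones_B$ with $B\sim\mu_{e^x}$. Its quadratic form is $h\mapsto \operatorname{Var}_{\mu}(\langle h,\ones_B\rangle)$. Since $\mu_{e^x}$ has full support on $\cB$, this variance vanishes iff $h(B):=\sum_{e\in B}h(e)$ is the same for every base $B$. Now take two solutions $x_0,x_1$. Because $F$ is convex, $F$ is constant on the segment between them, so the second derivative in direction $h=x_1-x_0$ is zero. Hence $h(B)$ is constant over $\cB$.

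The main step is a matroid lemma: if $h(B)$ is constant over all bases, then $h$ is constant on each connected component. To prove it, take $e\neq e'$ in the same component. Then some circuit contains both, and the standard exchange property gives two bases $B_1,B_2$ with $B_1-B_2=\{e\}$ and $B_2-B_1=\{e'\}$ (this is what \cite[Lemma 6.7]{truong2024modulus} provides for connected matroids). It follows that $h(e)-h(e')=h(B_1)-h(B_2)=0$. Consequently $x_1-x_0$ is constant on each component, that is, $\la_1=\la_0$ rescaled by a positive constant on each component, which gives uniqueness.

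The only delicate point is the requirement "unique up to rescaling on each component" when the constant $h(B)$ is nonzero. This is automatically consistent: a vector $h$ that is constant $c_j$ on $C_j$ gives $h(B)=\sum_j c_j r(C_j)$ for every base $B$, matching the invariance shown above. Note also that in the statement $M$ is assumed connected, so the conclusion reduces to uniqueness up to a single global scalar. I will present the proof for general $M$ with components, since that costs nothing extra.
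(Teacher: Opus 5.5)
Your proof is correct, but it reaches uniqueness by a different route than the paper.

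\textbf{The paper's route.} It first shows that every solution induces the maximum-entropy pmf. After normalizing $\prod_e\la(e)^{\beta(e)}=1$, it computes $\sum_B\la[B]=\exp(H(\mu_\la))$. Comparing this with the optimal value $\exp(H(\mu^*_\beta))$ of (\ref{eq:min-det}) from Theorem~\ref{thm:main-mindet} forces $\mu_\la=\mu^*_\beta$. It then concludes from the linear system $\cN(\log\la)=\log\mu^*_\beta$ and the cited fact that $\cN$ has rank $|E|-c(M)+1$ (the dimension of the base polytope plus one), which is $|E|$ for connected $M$.

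\textbf{Your route.} You minimize $F(x)=\log\sum_B e^{\langle x,\ones_B\rangle}-\langle x,\beta\rangle$. Because $\beta(E)=r(E)$, this is just the scale-invariant logarithmic form of (\ref{eq:min-det}) with $\si=\beta$. You then use second-order information. The Hessian is the covariance of $\ones_B$ under the fully supported $\mu_{e^x}$, so the difference $h$ of two solutions has constant base sums. The single-exchange lemma then forces $h$ to be constant on components.

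\textbf{What each buys.} Your exchange lemma, together with the easy converse you also prove, is equivalent to the rank formula the paper cites. So your proof is self-contained: it needs neither the entropy machinery behind Theorem~\ref{thm:main-mindet} nor the polytope-dimension result. It also treats disconnected $M$ uniformly, giving exactly the componentwise-rescaling description rather than reducing to the connected case. The paper's route additionally yields $\mu_\la=\mu^*_\beta$, i.e., the unique $\la$ realizes the maximum-entropy pmf in $\cU_\beta$. That is not needed for the statement. In your framework it would follow in one line from Gibbs' inequality, since $-\sum_B\nu(B)\log\mu_\la(B)=\log\sum_B\la[B]-\langle\log\la,\beta\rangle$ for every $\nu\in\cU_\beta$.
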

	\begin{proof}
			For $\beta \in \co^+(\cB)$, by Theorem \ref{thm:supp}, there exist $\lambda^*$ that minimizes problem (\ref{eq:min-det}) (with $\si:=\beta$), and $\mu_{\lambda^*}=\mu^*_{\beta},$ where $\mu^*_{\beta}$ is the unique maximum-entropy pmf in $\mathcal{U}_\beta$.  
		
		Next, let $\lambda \in \R^E_{>0}$ be a vector such that $\eta_{\lambda}=\beta$.

		Because we always can scale $\lambda$ and it does not change $\eta_{\lambda}$, hence you can assume that 
		\begin{equation}\label{eq:cons}
			\prod\lambda(e)^{\beta(e)}=1.
		\end{equation}
		Denote  $\mu:=\mu_\lambda$ and $Z:=\sum\limits_{B\in \cB} \lambda[B]$, we want to show that:
		
		\[Z= \exp(H(\mu)),\]
		where $H(\mu)$ is defined in  (\ref{eq:max-entropy}).
		We have that (\ref{eq:cons}) is equivalent to \[\prod\limits_{B \in\cB} \lambda[B]^{\mu(B)}=1.\] Note that $\lambda[B]=Z\mu(B)$, hence 
		\[ \prod\limits_{B \in\cB} (Z\mu(B))^{\mu(B)}=1.\]
		Thus, \[ Z = \prod\limits_{B \in\cB}\mu(B)^{-\mu(B)}=\exp(H(\mu)).\]
		Therefore, \[\exp(H(\mu)) = Z \geq \sum\limits_{B\in \cB} \lambda^*[B] = \exp(H(\mu_{\beta}^*)) \geq \exp(H(\mu)).\] This implies that $\mu^*_{\beta}=\mu = \mu_{\lambda}$.
		Without loss of generality, we assume that $M$ is connected.
		If $\lambda$ satisfies $\eta_{\lambda}=\beta$, then so does $t\lambda$. Hence, it is enough to show that there exists a unique $\lambda$ such that $\lambda[B] = \mu^*_{\beta}(B) $ for all $B \in \cB$.
		By taking logarithm on both sides, the system of equations is equivalent to 
		\begin{equation}\label{eq:system}
			\cN(\log\lambda)=\log\mu^*_{\beta}.
		\end{equation}
		where $\log \lambda$ and $\log \mu^*_{\beta}$ denote taking the logarithm
		element-wise on all entries of $\lambda$ and $\mu^*_{\beta}$.
		Since $M$ is connected, by \cite[Proposition 2.4]{matroidpolytopes}, the matrix $\cN$ has rank equal to the dimension of the matroid polytope plus one, which equals to $|E|-c(M)+1=|E|$, where $c(M)$ is the number of connected components of $M$. Hence, the system of equation (\ref{eq:system}) contains at most one solution. Therefore, $\lambda$ is unique.
		
	\end{proof}

\section{Some topics in the base modulus for weighted matroids}
\subsection{Strength and fractional arboricity of a weighted matroid}\label{sec:sd}

In this section, we present several lemmas and propositions concerning 
the strength $S_{\si}(M)$ and fractional arboricity $D_{\si}(M)$ 
of a matroid $M$ with weights $\si$. 
Recall that a matroid $M$ is $\si$-homogeneous if and only if 
$S_{\si}(M) = D_{\si}(M)$. 
We also recall Parts 1 and 2 of Lemma 5.4 from \cite{truong2024modulus} 
as follows:

\begin{lemma}\cite[Lemma 5.4]{truong2024modulus}\label{lem:5sd}
	Let $M=(E,\cI)$ be a matroid with weights $\si$. Given a subset $ X \subseteq E$, the following properties hold:
	\begin{enumerate}
		\item If $ \cl( X) \neq E $, then $ S_{\si}(M) \leq S_{\si}(M / X) $.
		\item If $ X \neq E $, then $ D_{\si}(M) \geq D_{\si}(M \setminus X) $.
	\end{enumerate}
\end{lemma}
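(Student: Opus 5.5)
Both parts can be proved directly from the definitions of the weighted strength and the weighted fractional arboricity in (\ref{eq:weighted-strength-problem}) and (\ref{eq:weighted-d-problem}), by comparing feasible sets. The idea is that every feasible set for the problem on the minor yields a feasible set for the problem on $M$ with the same ratio. There is no need to pass through the universal density.

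For Part 2, let $Y \subseteq E - X$ with $r_{M\setminus X}(Y) > 0$. Since deletion does not change ranks of subsets of $E-X$, we have $r_{M\setminus X}(Y) = r_M(Y)$. The weights are also unchanged, so $\si(Y)/r_{M\setminus X}(Y) = \si(Y)/r_M(Y)$. Hence $Y$ is feasible for $D_\si(M)$ with the same ratio, and taking the maximum over $Y$ gives $D_\si(M\setminus X) \le D_\si(M)$. We need $X \neq E$ (together with $E - X$ not consisting only of loops, which holds since $M$ is loopless) so that the maximum defining $D_\si(M\setminus X)$ is over a nonempty feasible set.

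For Part 1, let $Y \subseteq E - X$ be feasible for $S_\si(M/X)$, so $r_{M/X}(E-X) > r_{M/X}(E-X-Y)$. We use $r_{M/X}(Z) = r_M(Z\cup X) - r_M(X)$. Then
\[
r_{M/X}(E-X) - r_{M/X}\bigl((E-X)-Y\bigr) = r_M(E) - r_M(E-Y),
\]
since $X \cup (E-X-Y) = E - Y$. So $Y$, viewed as a subset of $E$, satisfies $r_M(E-Y) < r_M(E)$ and has the same ratio $\si(Y)/(r(E)-r(E-Y))$. Taking the minimum gives $S_\si(M) \le S_\si(M/X)$.

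The hypothesis $\cl(X) \neq E$ guarantees that $r_M(X) < r_M(E)$, so $M/X$ has positive rank. Then $Y = E - X$ is feasible for $S_\si(M/X)$, and the minimum defining $S_\si(M/X)$ is over a nonempty set. The only real care needed is this bookkeeping of nonemptiness of the feasible sets and the identity $X\cup(E-X-Y)=E-Y$. I do not expect a substantive obstacle.
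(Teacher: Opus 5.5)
Your proof is correct. In each part, every feasible set for the minor is also feasible for $M$ with the same ratio: for deletion because $r_{M\setminus X}(Y)=r(Y)$, and for contraction because $r_{M/X}(E-X)-r_{M/X}(E-X-Y)=r(E)-r(E-Y)$. Your handling of nonemptiness of the feasible sets is also right. The paper gives no proof of its own, since it imports this lemma from \cite{truong2024modulus}; your direct rank computation is the same kind the paper uses to prove Lemma~\ref{lem:ss-con-del}.
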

Next, we provide several properties similar to Lemma \ref{lem:5sd}.

\begin{lemma}\label{lem:dd-hom}
	Let $M=(E,\cI)$ be a matroid with weights $\si$. Assume that $M$ is $\si$-homogeneous.
	\bi
	
	\item For any set $H \subset E$ such that $\cl(H) \neq E$, we have $D_{\si}(M/H) \geq D_{\si}(M)$.
	\item For any set $\emptyset \neq H \subseteq E$, we have $S_{\si}(M|H) \leq S_{\si}(M)$.
	\ei
\end{lemma}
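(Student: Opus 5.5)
The plan is to use the homogeneity characterization (\ref{eq:hom-char}), namely $S_\si(M)=\theta_\si(M)=D_\si(M)$. Each part then reduces to comparing a single optimal ratio in the minor with the global density $\theta_\si(E)=\si(E)/r(E)$. For the minor we only need to exhibit one good set, because $D$ is a maximum and $S$ is a minimum.

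For the first bullet, set $H$ with $\cl(H)\neq E$. The contraction $M/H$ has ground set $E-H$, rank function $r_{M/H}(A)=r(A\cup H)-r(H)$, and rank $r(E)-r(H)>0$. Take $X=E-H$ as the candidate set in the definition of $D_\si(M/H)$. This gives
\[
D_\si(M/H)\geq \frac{\si(E)-\si(H)}{r(E)-r(H)}.
\]
Homogeneity gives $\si(H)/r(H)\leq D_\si(M)=\si(E)/r(E)$ when $r(H)>0$. When $r(H)=0$, $H$ consists of loops, and then $\si(H)=0$ because the paper assumes loopless matroids (or the inequality is trivial). A mediant-type inequality finishes this part. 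If $\si(H)\leq \theta r(H)$ with $\theta=\si(E)/r(E)$, then $\si(E)-\si(H)\geq \theta(r(E)-r(H))$. Hence $D_\si(M/H)\geq \theta=D_\si(M)$.

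For the second bullet, take $\emptyset\neq H\subseteq E$, and assume $r(H)>0$ so that $M|H$ has positive rank. Choose $X=H$ in the definition of $S_\si(M|H)$. Then $r_{M|H}(H-X)=0<r(H)$, so $X=H$ is admissible, and
\[
S_\si(M|H)\leq \frac{\si(H)}{r(H)}\leq D_\si(M)=S_\si(M),
\]
where the last equality is again (\ref{eq:hom-char}).

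Both parts are essentially one-line once the candidate sets are chosen. The only delicate point I expect is the degenerate situations: $r(H)=0$ in the first bullet, and $M|H$ having rank zero in the second. In those cases I would either invoke looplessness or note that the strength of a rank-zero matroid is undefined, and so exclude them.
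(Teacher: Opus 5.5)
Your proof is correct and follows the paper's: you use the same candidate sets ($E-H$ in $M/H$, and $H$ in $M|H$) and the homogeneity identities from (\ref{eq:hom-char}), and your second bullet is identical to the paper's. The only difference is in the first bullet. The paper gets $\si(E-H)/(r(E)-r(H))\ge S_\si(M)$ directly by taking $X=E-H$ in the definition of strength, then uses $S_\si(M)=D_\si(M)$, whereas you derive the same bound from $\theta_\si(M)=D_\si(M)$ by your mediant computation.
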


\begin{proof}
	We have
\[D_{\si}(M/H) \geq \frac{\si(E - H)}{r(M/H)} = \frac{\si(E - H)}{r(M) - r(H)} \geq S_{\si}(M) = D_{\si}(M),\]
and
\[
S_{\si}(M|H) \leq \frac{\si(H)}{r(H)} \leq D_{\si}(M) = S_{\si}(M).
\]
\end{proof}

\begin{lemma}\label{lem:ss-con-del}
	Let $M=(E,\cI)$ be a matroid with weights $\si$. 
	\bi
	\item For any set $H$ such that $\cl(H) \neq E$, we have $S_{\si}(M \setminus H) \leq S_{\si}(M/H)$.
	\item For any set $H$ such that $\cl(H) \neq E$, we have $D_{\si}(M \setminus H) \geq D_{\si}(M/H)$.
	\ei
\end{lemma}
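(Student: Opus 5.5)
The first bullet follows from submodularity applied to a single optimal set. For the second bullet I state the intended comparison and the reduction I would attempt, but a small example indicates that the inequality as worded fails, so I do not expect that part to go through.

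\textbf{First bullet.} Both $M\setminus H$ and $M/H$ live on the ground set $E-H$, and for $Y\subseteq E-H$ we have $r_{M\setminus H}(Y)=r(Y)$ and $r_{M/H}(Y)=r(Y\cup H)-r(H)$. The hypothesis $\cl(H)\neq E$ gives $r(M/H)=r(E)-r(H)>0$, so $S_\si(M/H)$ is well defined.

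Let $X\subseteq E-H$ be an optimal set for $S_\si(M/H)$. Its denominator is
\[
r_{M/H}(E-H)-r_{M/H}(E-H-X)=r(E)-r(E-X)>0 .
\]
Since $X\cap H=\emptyset$, the sets $E-X$ and $E-H$ have union $E$ and intersection $E-H-X$. Submodularity then gives
\[
r(E-H)-r(E-H-X)\ \ge\ r(E)-r(E-X)\ >0 .
\]
So $X$ is feasible for $S_\si(M\setminus H)$, and its ratio there is at most its ratio in $M/H$. Hence $S_\si(M\setminus H)\le S_\si(M/H)$.

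\textbf{Second bullet: intended reduction.} The plan is the mirror image. Take the core $Y\subseteq E-H$ of $M/H$, so that $D_\si(M/H)=\si(Y)/(r(Y\cup H)-r(H))$. Then look for a set $X\subseteq E-H$ with
\[
\frac{\si(X)}{r(X)}\ \ge\ \frac{\si(Y)}{r(Y\cup H)-r(H)} .
\]
The natural choice is $X=Y$. I expect this to be the whole difficulty, for two reasons. First, submodularity gives $r(Y\cup H)-r(H)\le r(Y)$, which runs the wrong way: it yields $\si(Y)/r(Y)\le D_\si(M/H)$. Second, the denominator $r(Y\cup H)-r(H)$ can be strictly smaller than $r(Y)$, for instance when part of $Y$ lies in $\cl(H)$ and so becomes loops in $M/H$. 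If that step fails, I would next try to modify $Y$ or to add hypotheses, for example that $E-H$ has no elements of $\cl(H)$, or that $M$ is $\si$-homogeneous, in which case Lemma~\ref{lem:5sd} and Lemma~\ref{lem:dd-hom} could be chained.

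\textbf{Second bullet: why I expect it to fail as stated.} The following example suggests the statement needs correcting, not just a cleverer proof. Let $E=\{a,b,c\}$ with $a,b$ parallel, $c$ a coloop, unit weights, and $H=\{a\}$, so $\cl(H)=\{a,b\}\neq E$.
\begin{itemize}
\item In $M\setminus H$ the elements $b,c$ are both coloops, so $D(M\setminus H)=1$.
\item In $M/H$ the element $b$ becomes a loop and $c$ remains a coloop, so $D(M/H)=2/1=2$.
\end{itemize}
The comparison that the rank argument actually delivers is the reverse inequality $D_\si(M\setminus H)\le D_\si(M/H)$. I would therefore check the intended direction, or an additional hypothesis that excludes such loops, before investing in this part.
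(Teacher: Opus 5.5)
Your first bullet is the paper's argument. The paper compares, for every $X\subseteq K=E-H$, the denominators $r(K)-r(X)$ and $r(E)-r(X\cup H)$ using submodularity on $K$ and $X\cup H$. That is your inequality after replacing your $X$ by $K-X$. Working with a single optimal set, as you do, is equivalent, and you check feasibility explicitly where the paper leaves it implicit.

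For the second bullet you are right: the defect is in the statement, not in your approach.
\begin{itemize}
\item The paper's proof rests on the submodularity inequality $r(H)+r(X)\ge r(H\cup X)$ for $X\subseteq E-H$; its last term is misprinted there as $r(H\cap X)$. This says $r(X)\ge r_{M/H}(X)$. It makes each ratio $\si(X)/r(X)$ at most the corresponding ratio in $M/H$. So the argument actually proves $D_\si(M\setminus H)\le D_\si(M/H)$, and the concluding ``$\ge$'' is a sign error.
\item Your example is a valid counterexample. To pre-empt the objection that the paper works with loopless matroids, take $M=U_{2,3}$ with $H$ a single element (a triangle with one edge deleted versus contracted). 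This gives $D(M\setminus H)=1<2=D(M/H)$ with no loops anywhere.
\item The paper later uses this bullet in Proposition~\ref{prop:dd-con}, in the step $D_\si(N/A)\ge D_\si(N\setminus A)$. That is the corrected direction, so that proposition is not affected.
\end{itemize}

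If you write up the corrected inequality, note that the term-by-term comparison only handles $X$ with $r_{M/H}(X)>0$. For nonempty $X\subseteq(E-H)\cap\cl(H)$ with $r(X)>0$, choose $z\in E-\cl(H)$ and use $\si(X)/r(X)\le\si(X)<\si(X)+\si(z)=\si(X\cup\{z\})/r_{M/H}(X\cup\{z\})$.
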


\begin{proof}
	Denote $K = E - H$, we have:
	\begin{align*}
		S_{\si}(M \setminus H) 
		&= \min\limits_{X \subset K} \frac{\si(K - X)}{r_{M \setminus H}(K) - r_{M \setminus H}(X)} \\
		&= \min\limits_{X \subset K} \frac{\si(K - X)}{r(K) - r(X)}.
	\end{align*}
We also have 
	\begin{align*}
		S_{\si}(M/H) 
		&= \min\limits_{X \subset K} \frac{\si(K - X)}{r_{M/H}(K) - r_{M/H}(X)} \\
		&= \min\limits_{X \subset K} \frac{\si(K - X)}{r(E) - r(H) - r(X \cup H) + r(H)} \\
		&= \min\limits_{X \subset K} \frac{\si(K - X)}{r(E) - r(X \cup H)}.
	\end{align*}
Note that the rank function is a submodular function, then for any set $X \subset K$, \[r(K) + r(X \cup H) \geq r(K \cap (X \cup H)) + r(K \cup (X \cup H)) = r(X) + r(E).\]
	Hence, $S(M \setminus H) \leq S(M/H)$.

Next, we have
	\begin{align*}
		D_{\si}(M \setminus H) 
		&= \max\limits_{X \subset K} \frac{\si(X)}{r_{M \setminus H}(X)} \\
		&= \max\limits_{X \subset K} \frac{\si(X)}{r(X)},
	\end{align*}
and
	\begin{align*}
		D_{\si}(M/H) 
		&= \max\limits_{X \subset K} \frac{\si(X)}{r_{M/H}(X)} \\
		&= \max\limits_{X \subset K} \frac{\si(X)}{r(X \cup H) - r(H)}.
	\end{align*}
Since the rank function is a submodular function,  we have \[r(H) + r(X) \geq r(H \cap X) + r(H \cup X) = r(H \cap X).\] Therefore, \[D_{\si}(M \setminus H) \geq D_{\si}(M/H).\]
\end{proof}
The following lemma shows that the strength of a matroid $M$ remains unchanged if we contract any set  $H\subseteq (E - E_{\max})$ where $E_{max}$ is the set of  elements that attain the maximal value of the universal density of $M$, which is the maximal set that is optimal for the strength problem $S_{\si}(M)$.
\begin{lemma}
	Let $M=(E,\cI)$ be a matroid with weights $\si$. Let $E_{max}$ be the set of  elements that attain the maximal value of the universal density of $M$. Let $H$ be a subset of $E$. If $H\subseteq (E - E_{\max})$, then $S_{\si}(M/H) = S_{\si}(M)$.
\end{lemma}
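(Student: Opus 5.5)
We prove the two inequalities $S_{\si}(M/H)\ge S_{\si}(M)$ and $S_{\si}(M/H)\le S_{\si}(M)$ separately. Write $E_{\max}=E_k$ in the notation of (\ref{eq:Ei}), so $E_{\max}$ is the unique maximal optimal set for $S_\si(M)$, with $S_\si(M)=1/s_k$ by (\ref{eq:pre}). We may assume $E_{\max}\neq\emptyset$, which always holds. Since $H\subseteq E-E_{\max}$, we have $E-H\supseteq E_{\max}$.

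\emph{Lower bound.} First check that $\cl(H)\neq E$. Optimality of $E_{\max}$ for $S_\si(M)$ means $r(E-E_{\max})<r(E)$. Since $H\subseteq E-E_{\max}$, monotonicity gives $r(H)\le r(E-E_{\max})<r(E)$, so $\cl(H)\ne E$. Part 1 of Lemma \ref{lem:5sd} then gives $S_\si(M)\le S_\si(M/H)$.

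\emph{Upper bound.} We test the set $X=E_{\max}$, viewed as a subset of the ground set $E-H$ of $M/H$, in the strength problem (\ref{eq:weighted-strength-problem}) for $M/H$. Its complement in $E-H$ is $E-H-E_{\max}$. The rank drop is
\[
r_{M/H}(E-H)-r_{M/H}(E-H-E_{\max})=\bigl(r(E)-r(H)\bigr)-\bigl(r(E-E_{\max})-r(H)\bigr)=r(E)-r(E-E_{\max}).
\]
Here we use that $(E-H-E_{\max})\cup H=E-E_{\max}$, which holds precisely because $H\subseteq E-E_{\max}$. This rank drop is positive, so $X=E_{\max}$ is feasible, and its ratio is
\[
\frac{\si(E_{\max})}{r(E)-r(E-E_{\max})}=S_\si(M).
\]
Hence $S_\si(M/H)\le S_\si(M)$, and combining the two bounds gives equality.

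The main obstacle is only bookkeeping. One must verify that $r(E-E_{\max})<r(E)$, which is where optimality and feasibility of $E_{\max}$ from (\ref{eq:pre}) are used. One must also check that contraction rank is computed with the union $E-E_{\max}$, which is exactly where the hypothesis $H\subseteq E-E_{\max}$ enters. The degenerate case $H=\emptyset$ is trivial.
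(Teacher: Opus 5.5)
Your proof is correct, and for the upper bound it takes a different route from the paper.

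The lower bound is the same as the paper's: both use part 1 of Lemma~\ref{lem:5sd}. The only difference there is how you check $\cl(H)\neq E$. You use $r(H)\le r(E-E_{\max})<r(E)$, while the paper cites the closedness of $E-E_{\max}$ from \cite{truong2024modulus}.

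For the upper bound, the paper argues through minors. It first writes $S_\si(M)=S_\si(M/(E-E_{\max}))$ using (\ref{eq:pre}). It then rewrites this contraction as $(M/H)/((E-E_{\max})-H)$ via Proposition~\ref{prop:con-del}, and applies the contraction monotonicity of Lemma~\ref{lem:5sd} a second time, now inside $M/H$. You instead plug $E_{\max}$ directly into the strength problem for $M/H$ as a test set.

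Your route is more elementary. It needs only that $E_{\max}$ is an optimal set for $S_\si(M)$. It does not need the full statement (\ref{eq:pre}) about the contracted matroid, nor the closedness of $E-E_{\max}$.

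It also gives more. Your rank computation works for every $X\subseteq E-H$: $r_{M/H}(E-H)-r_{M/H}(E-H-X)=r(E)-r(E-X)$. So $S_\si(M/H)$ is exactly the strength minimization for $M$ restricted to sets disjoint from $H$, and both inequalities follow at once. Consequently the equality $S_\si(M/H)=S_\si(M)$ holds whenever $H$ avoids \emph{some} optimal set of $S_\si(M)$, not only the maximal one $E_{\max}$; the condition $r(H)<r(E)$ then holds automatically.

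The paper's argument fits the style of Propositions~\ref{prop:ss-res} and~\ref{prop:dd-con}, which are proved by similar chains of minors and monotonicity lemmas.
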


\begin{proof}
Note that $E - E_{\max}$ is a closed set, see Theorem \cite[Theorem 4.1]{truong2024modulus}. We have $\cl(H) \subseteq \cl(E - E_{\max}) = E - E_{\max} \neq E $. Hence, we have
\begin{align*}
S_{\si}(M/H) &\geq S_{\si}(M) &(\text{By Lemma } \ref{lem:5sd})\\
&= S_{\si}(M/(E - E_{\max})) &(\text{By  (\ref{eq:pre}}))\\
&= S_{\si}\Bigl( M/H/\bigl( (E - E_{\max}) - H \bigr)\Bigr) &(\text{By Proposition } \ref{prop:con-del})\\
&\geq S(M/H)&(\text{By Lemma } \ref{lem:5sd}).
\end{align*}
Thus, the equality must happen.
\end{proof}
We also have a similar lemma related to fractional arboricity.
\begin{lemma}
	Let $M=(E,\cI)$ be a matroid with weights $\si$. Let $E_{min}$ be the set of  elements that attain the minimal value of the universal density of $M$. Let $H$ be a subset of $E$. If $H \supset E_{\min}$, then \(D_{\si}(M|H) = D_{\si}(M)\).
\end{lemma}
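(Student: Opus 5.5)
We sandwich $D_{\si}(M|H)$ between two copies of $D_{\si}(M)$, mirroring the proof of the preceding lemma on strength. For the upper bound, if $H=E$ there is nothing to prove. Otherwise $H\neq E$, and Part 2 of Lemma \ref{lem:5sd} applied to $X:=E-H$ gives
\[
D_{\si}(M|H)=D_{\si}(M\setminus(E-H))\leq D_{\si}(M).
\]
This direction is also immediate from the definitions, since every $X\subseteq H$ has $r_{M|H}(X)=r(X)$, so the maximum defining $D_{\si}(M|H)$ runs over a subfamily of the sets used for $D_{\si}(M)$.

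For the lower bound, we use the structure recalled after (\ref{eq:pre}). The set $E_{\min}$ of elements attaining the minimal value of $\si^{-1}\eta^*$ is exactly $A_1$, the core of $M$, which is the unique maximal optimal set for $D_{\si}(M)$. By (\ref{eq:pre}) with $i=1$, it satisfies
\[
\frac{\si(A_1)}{r(A_1)}=D_{\si}(M)=\frac{1}{s_1}.
\]
Here "universal density" should be read as the weighted quantity $\si^{-1}\eta^*$ when $\si$ is nonconstant; in the unweighted case the two readings coincide. Alternatively, Proposition \ref{thm:vmax} shows directly that $V_{max}$, the set where $\si/\eta^*$ is maximal (equivalently where $\si^{-1}\eta^*$ is minimal), is the core, with $v_{max}=D_{\si}(M)$.

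Since $E_{\min}\subseteq H$, the set $E_{\min}$ is a feasible set for the maximization defining $D_{\si}(M|H)$. It has positive rank because it is optimal for $D_{\si}(M)$, and its rank in $M|H$ equals its rank in $M$. Hence
\[
D_{\si}(M|H)\geq \frac{\si(E_{\min})}{r_{M|H}(E_{\min})}=\frac{\si(E_{\min})}{r(E_{\min})}=D_{\si}(M).
\]
Combining the two inequalities gives equality.

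The main point needing care is the identification of $E_{\min}$ with the core $A_1$, including the weighted interpretation. This is supplied by (\ref{eq:pre}), together with the statement that $A_1$ is the unique maximal optimal set for $D_{\si}(M)$, or equivalently by Proposition \ref{thm:vmax}(iii) combined with Theorem \ref{thm:mainmkl}. Once that identification is made, the rest is routine.
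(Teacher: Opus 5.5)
Your proof is correct and uses the same sandwich as the paper, which writes $D_{\si}(M|H)\geq D_{\si}(M|E_{\min})=D_{\si}(M)\geq D_{\si}(M|H)$ by applying Lemma~\ref{lem:5sd} twice and using that $E_{\min}$ is the core; the only difference is that you evaluate the ratio at $E_{\min}$ directly instead of passing through $D_{\si}(M|E_{\min})$. Your remark that, in the weighted case, $E_{\min}$ must be read as the set where $\si^{-1}\eta^*$ is minimal is a fair clarification of a point the paper leaves implicit.
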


\begin{proof}
	We have
	\begin{align*}
		D_{\si}(M|H) &\geq D_{\si}(M|E_{\min}) &(\text{By Lemma } \ref{lem:5sd})\\
		&= D_{\si}(M) &(\text{By definition of } E_{min})\\
		&\geq D_{\si}(M|H)&(\text{By Lemma } \ref{lem:5sd}).
		\end{align*}
Thus, the equality must happen.
\end{proof}
Finally, we provide two main propositions of this section.
\begin{proposition}\label{prop:ss-res}
	Let $M=(E,\cI)$ be a matroid with weights $\si$. Let $H$ be a subset of $E$. If $H \cap E_{\max}\neq \emptyset$, then $S_{\si}(M|H) \leq S_{\si}(M)$.
\end{proposition}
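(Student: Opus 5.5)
The plan is to bound $S_{\si}(M|H)$ from above by evaluating its defining ratio (\ref{eq:weighted-strength-problem}) at the single test set $X := H\cap E_{\max}$, which is nonempty by hypothesis. In $M|H$ the ground set is $H$ and $H-X = H\cap F$, where $F := E-E_{\max}$. So it suffices to prove
\[
r(H)-r(H-X)\;\geq\; s_k\,\si(X) \;>\;0 .
\]
This inequality shows that $X$ is feasible for $S_{\si}(M|H)$, and then
\[
S_{\si}(M|H)\leq \frac{\si(X)}{r(H)-r(H-X)} \leq \frac{1}{s_k} = S_{\si}(M),
\]
using $S_\si(M)=1/s_k$ from (\ref{eq:pre}). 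Here $s_k=(\si^{-1}\eta^*)_{max}$, so $\eta^*(e)=s_k\si(e)$ for every $e\in E_{\max}$, and hence $\eta^*(X)=s_k\si(X)>0$.

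First, I will record that $\eta^*(F)=r(F)$. This follows from Theorem~\ref{thm:lexi}(ii), since $F$ is one of the sets $S_i$ (it consists of all elements with $\si^{-1}\eta^*$ at most $s_{k-1}$). Alternatively, it follows from the fact that $M/(E-E_k)$ has universal density equal to the restriction of $\eta^*$ and has rank $r(E)-r(F)$. Fix a pmf $\mu^*$ with $\cN^T\mu^*=\eta^*$. By \cite[Lemma 3.12]{truong2024modulus}, every $B\in\supp\mu^*$ satisfies $|B\cap F|=r(F)$, so $B\cap F$ spans $F$.

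The key step is to show, for each such $B$, that $r(H)-r(H-X)\geq |B\cap X|$. Since $(B\cap X)\cup(B\cap F)$ is independent and $B\cap F$ spans $F$, we get $r(X\cup F)-r(F)\geq |B\cap X|$. Submodularity, applied to $X\cup(H-X)$ and $F$ with $H-X\subseteq F$, gives
\[
r(H)-r(H-X)=r(X\cup(H-X))-r(H-X)\geq r(X\cup F)-r(F).
\]
Combining these yields the claimed bound for each $B$. Averaging over $\mu^*$ then gives $r(H)-r(H-X)\geq \sum_B\mu^*(B)|B\cap X| = \eta^*(X)=s_k\si(X)$, which completes the argument.

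The only delicate point is the identity $\eta^*(F)=r(F)$, which supplies the spanning property of $B\cap F$ on the support of $\mu^*$. I expect this to be the main obstacle, although it is essentially already established earlier (through Theorem~\ref{thm:lexi} or (\ref{eq:pre})). The rest is a routine submodularity calculation.
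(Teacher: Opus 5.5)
Your proof is correct, but it follows a different route from the paper.

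The paper never evaluates the strength ratio of $M|H$ directly. It writes $M|H=(M\setminus C)\setminus D$, where $C=E_{\max}\setminus H$, $D=(E-E_{\max})\setminus H$ and $A=H\cap(E-E_{\max})$. It then runs a chain of minor-monotonicity steps:
\begin{itemize}
\item contracting $A$ cannot lower strength (Lemma~\ref{lem:5sd});
\item replacing the deletion of $D$ by its contraction cannot lower it (Lemma~\ref{lem:ss-con-del});
\item this lands on $(M/(E-E_{\max}))\setminus C$, a nonempty restriction of the $\si$-homogeneous matroid $M/(E-E_{\max})$, so Lemma~\ref{lem:dd-hom} and (\ref{eq:pre}) finish the argument.
\end{itemize}

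You instead pick the explicit witness $X=H\cap E_{\max}$. You certify $r(H)-r(H\cap F)\ge \eta^*(X)=s_k\si(X)$ using only two facts:
\begin{itemize}
\item the tightness $\eta^*(F)=r(F)$, so every base in $\supp\mu^*$ meets $F$ in exactly $r(F)$ elements;
\item one diminishing-returns submodularity step, valid because $X\cap F=\emptyset$ and $H\cap F\subseteq F$.
\end{itemize}

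What each route buys:
\begin{itemize}
\item The paper's route is modular. It reuses the deletion/contraction lemmas and mirrors its dual statement, Proposition~\ref{prop:dd-con}.
\item Your route is shorter and self-contained, and it exhibits a set that achieves the bound.
\item Your route also makes explicit the nondegeneracy $r(H)>r(H\cap F)$. The paper's chain needs this (for instance $\cl(A)\neq H$ in $M|H$ when invoking Lemma~\ref{lem:5sd}), but gets it only implicitly from the closedness of $E-E_{\max}$.
\end{itemize}

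One minor point. If $\si^{-1}\eta^*$ is constant, then $F=\emptyset$ is not among the sets $S_i$ of Theorem~\ref{thm:lexi}. In that case $\eta^*(F)=r(F)=0$ holds trivially, and your alternative justification via (\ref{eq:pre}) covers it anyway.
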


\begin{proof}
	Denote $ A = H \cap (E - E_{\max}) $, $ B= H \cap E_{\max}$, $C = E_{\max} \setminus H $, and $ D = (E - E_{\max}) \setminus H $. It follows that $B \neq \emptyset$ and $C \neq E_{\max}$. We have the following:
	\begin{align*}
		S_{\si}(M|H) &= S_{\si}((M \setminus C) \setminus D) &(\text{By Proposition } \ref{prop:con-del})\\
		&\leq S_{\si}(((M \setminus C) \setminus D) / A) &(\text{By Lemma } \ref{lem:5sd})\\
		&= S_{\si}(((M \setminus C) / A) \setminus D) &(\text{By Proposition } \ref{prop:con-del})\\
		&\leq S_{\si}(((M \setminus C) / A) / D) & ( \text{By Lemma \ref{lem:ss-con-del},} \\
			&&\text{ and note that } B \neq \emptyset,\text{ so these two matroids are not empty})\\
		&= S_{\si}(((M / A) / D) \setminus C) &(\text{By Proposition } \ref{prop:con-del})\\
		&= S_{\si}((M / (E - E_{\max})) \setminus C) &(\text{By Proposition } \ref{prop:con-del})\\
		&\leq S_{\si}(M / (E - E_{\max})) & (\text{since } M / (E - E_{\max}) \text{ is } \si\text{-homogeneous and by Lemma \ref{lem:dd-hom}}).
	\end{align*}
\end{proof}

\begin{proposition}\label{prop:dd-con}
	Let $M=(E,\cI)$ be a matroid with weights $\si$. Let $H$ be a subset of $E$. If $E_{\min} \setminus H \neq \emptyset$ and $\cl(H) \neq E$, then $D_{\si}(M/H) \geq D_{\si}(M)$.
\end{proposition}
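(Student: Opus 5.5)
The plan is to argue directly from the definitions, mirroring the dual role that $E_{\min}$ plays relative to $E_{\max}$ in Proposition \ref{prop:ss-res}. Recall from (\ref{eq:pre}) and the discussion after it that $E_{\min}=A_1$ is the core of $M$. Three facts are needed:
\[
D_\si(M)=\theta_\si(E_{\min}), \qquad D_\si(M|E_{\min})=\theta_\si(E_{\min}),
\]
and $M|E_{\min}$ is $\si$-homogeneous. The second follows from Lemma \ref{lem:5sd}, Part 2, together with the fact that $E_{\min}$ is itself a feasible set. The third holds because restricting to the core keeps the same universal density, or equivalently because $\theta_\si(M|E_{\min})=D_\si(M|E_{\min})$ and (\ref{eq:hom-char}) applies. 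By (\ref{eq:hom-char}) we therefore get
\[
S_\si(M|E_{\min})=\theta_\si(E_{\min})=D_\si(M).
\]

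The key step is to test the maximization defining $D_\si(M/H)$ on the set $X:=E_{\min}-H$, which is nonempty by hypothesis. Its rank in $M/H$ is $r(E_{\min}\cup H)-r(H)$. Setting $Y:=E_{\min}\cap H$, submodularity of $r$ applied to the sets $E_{\min}$ and $H$ gives
\[
r(E_{\min}\cup H)-r(H)\le r(E_{\min})-r(Y).
\]
Hence, whenever $r_{M/H}(X)>0$,
\[
D_\si(M/H)\ \ge\ \frac{\si(E_{\min}-H)}{r(E_{\min}\cup H)-r(H)}\ \ge\ \frac{\si(E_{\min}-Y)}{r(E_{\min})-r(Y)}.
\]
The denominator on the right satisfies $r(E_{\min})-r(Y)\ge r_{M/H}(X)>0$. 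This makes $Y$ a feasible set in the strength problem for $M|E_{\min}$, where the set removed is $E_{\min}-Y$ and the rank drops from $r(E_{\min})$ to $r(Y)$. The right-hand side is therefore at least $S_\si(M|E_{\min})=D_\si(M)$, which proves the claim.

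The main obstacle is the degenerate case $r_{M/H}(E_{\min}-H)=0$, that is, $E_{\min}\subseteq \cl(H)$. Here every element of $E_{\min}-H$ becomes a loop in $M/H$. The hypothesis $\cl(H)\neq E$ guarantees that $M/H$ has positive rank, so $D_\si(M/H)$ is defined. However, the test set $X$ is no longer admissible, and $D_\si(M/H)$ could in principle drop.

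I would handle this case in one of two ways. The first is to note that, under the convention implicit in the paper, a loop $e$ with $\si(e)>0$ makes $\si(X)/r(X)$ unbounded, so $D_\si(M/H)=+\infty$ and the inequality holds trivially. The second, if loops are to be excluded, is to restrict to the case where $E_{\min}-H$ is not spanned by $H$; I would check whether the intended statement implicitly assumes this, since no finite-valued argument can work without it.
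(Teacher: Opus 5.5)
Your argument is correct whenever $r_{M/H}(E_{\min}-H)>0$, and it is the paper's proof in compressed form. Write $A=H-E_{\min}$, $B=H\cap E_{\min}$, $C=E_{\min}-H$ and $F=(E-E_{\min})-H$. The paper passes from $M/H$ to $(M/H)\setminus F=(M/H)|C$. It then trades the contraction of $A$ for its deletion to reach $(M|E_{\min})/B$, and finishes with Lemma~\ref{lem:dd-hom} on the homogeneous matroid $M|E_{\min}$. Your single test set $C$, submodularity on the pair $E_{\min},H$, and the bound $S_\si(M|E_{\min})=D_\si(M)$ do the same work in one line.

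Your version has two advantages. First, it makes the needed nondegeneracy condition $r(E_{\min}\cup H)>r(H)$ explicit. The paper's chain needs it too, since $(M/H)|C$ has rank exactly $r(E_{\min}\cup H)-r(H)$, but the paper never states it. Second, it does not rely on Lemma~\ref{lem:ss-con-del}. The second part of that lemma is stated as $D_\si(M\setminus H)\ge D_\si(M/H)$, whereas the proposition uses, and submodularity gives, the reverse inequality.

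For the degenerate case $E_{\min}\subseteq\cl(H)$, drop your first option. $D_\si$ is a maximum over sets with $r(X)>0$ only, so loops of $M/H$ just add weight to sets of positive rank and never make $D_\si(M/H)$ infinite. Your second instinct is the right one, because the statement is false in this case.

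Here is a counterexample. Let $M$ be the direct sum of $U_{2,5}$ on $\{e_1,\dots,e_5\}$ and a coloop $f$, with $\si\equiv 1$. Then $\eta^*=2/5$ on each $e_i$ and $\eta^*(f)=1$, so $E_{\min}=\{e_1,\dots,e_5\}$ and $D(M)=5/2$. Take $H=\{e_1,\dots,e_4\}$. Then $\cl(H)=E_{\min}\neq E$ and $E_{\min}-H=\{e_5\}\neq\emptyset$. But in $M/H$ the element $e_5$ is a loop and $f$ is a coloop, so $D(M/H)=2<5/2$.

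So the proposition needs the additional hypothesis $E_{\min}\not\subseteq\cl(H)$. This holds automatically when $H$ is closed, i.e. when $M/H$ is loopless, as the paper's standing convention would require. With that hypothesis your proof is complete.
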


\begin{proof}
	Denote $ A = H \cap (E - E_{\min}) $, $ B= H \cap E_{\min}$, $C = E_{\min} \setminus H $, and $ F = (E - E_{\min}) \setminus H $. It follows that  $C \neq \emptyset$. We have the following:
	\begin{align*}
		D_{\si}(M/H) &= D_{\si}((M/A)/B) &(\text{By Proposition } \ref{prop:con-del})\\
		&\geq D_{\si}(((M/A)/B) \setminus F) &(\text{By Lemma } \ref{lem:5sd})\\
		&= D_{\si}(((M/B) \setminus F) / A) &(\text{By Proposition } \ref{prop:con-del})\\
		&\geq D_{\si}(((M/B) \setminus F) \setminus A)& ( \text{By Lemma \ref{lem:ss-con-del},}\\
			&&\text{ and note that } C \neq \emptyset,\text{ so these two matroids are not empty})\\
		&= D_{\si}(((M \setminus F) \setminus A) / B) &(\text{By Proposition } \ref{prop:con-del}) \\
		&= D_{\si}((M \setminus (E - E_{\min}))/B) &(\text{By Proposition } \ref{prop:con-del})\\
		&\geq D_{\si}(M \setminus (E - E_{\min})) & (\text{since } M \setminus (E - E_{\min}) \text{ is } \si\text{-homogeneous and by Lemma \ref{lem:dd-hom}}).
	\end{align*}
\end{proof}

\subsection{Modulus for dual matroids}\label{sec:dualmatroid}

Given a loopless matroid $ M(E,\cI) $ with $ r(E) > 0 $, let $ \cB $ be the base family of $ M $. We recall that the set
\[\cB^* = \left\{ X \subseteq E : \text{there exists a base } B \in \cB \text{ such that } X = E - B \right\}\]
is the family of bases of the dual matroid $ M^* $ of $M$ with the rank function $ r^* $. The rank function $r^*$ satisfies $ r^*(M^*) = |E| - r(M) $. The dual of the dual of a matroid $ M $ is the matroid $ M $ itself, in other words, $ (M^*)^* = M $.

In \cite{truong2024modulus}, the authors provide a relationship between the optimal solution of (\ref{eq:2norm}) of a matroid and its dual. Specifically, the optimal solution of (\ref{eq:2norm}) of $M$ and its dual add up to the vector of all ones. In this section, we generalize this result to matroids with weights as follows. 

\begin{theorem}\label{thm:dualmat}
	Let $ E $ be a finite set. Let $ \Gamma $ be a family of vector in $\R_{\geq 0}^E$ with fixed row sum $k>0$. Let $\si$ be a set of weights in $\R_{>0}^E$ such that $\si \geq \gamma$ for any $\ga \in \Ga$. We define a family of vectors $\Gamma^* := \{ \si - \gamma : \gamma \in \Gamma \} $.  Let $\widehat{\Ga}$ and $\widehat{\Ga^*}$ be the Fulkerson blocker families of $\Ga$ and $\Ga^*$, respectively. Let $ \eta^* $ and $ \eta_{\circ}^* $ be the optimal densities for $ \Mod_{2,\si}(\widehat{\Gamma}) $ and $ \Mod_{2,\si}(\widehat{\Gamma^*}) $, respectively. Then we have 
	\[\eta^* + \eta_{\circ}^* = \si.\]
\end{theorem}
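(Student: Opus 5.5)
The plan is to reduce both modulus problems to quadratic minimizations over convex hulls, and then observe that the affine map $\xi \mapsto \si - \xi$ carries one problem onto the other. I will use the identification already used for bases in (\ref{eq:2norm-w}) and Remark~\ref{re:modinf}. By Fulkerson duality for $p=2$ (Appendix~\ref{subsec:mod}, together with the characterization of admissible sets of the blocker through the dominant, as in (\ref{eq:adm-dom})), the optimal density $\eta^*$ of $\Mod_{2,\si}(\widehat{\Ga})$ is the unique minimizer of
\[
\min\Bigl\{ \sum_{e\in E}\si^{-1}(e)\eta(e)^2 : \eta \in \co(\Ga) + \R^E_{\geq 0}\Bigr\}.
\]
The same holds for $\eta_\circ^*$ with $\Ga^*$ in place of $\Ga$. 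If the normalization in the duality statement carries a scalar factor (a power of the modulus), I will keep track of it. It is the same for both families once the row sums are shown to be preserved, so it does not affect the identity.

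\textbf{Step 1: the dominant can be replaced by the hull.} Any point $\eta$ of the dominant satisfies $\eta \geq \xi$ for some $\xi \in \co(\Ga)$, with $\xi\ge 0$. If $\eta\neq\xi$, then $\sum \si^{-1}\eta^2 > \sum \si^{-1}\xi^2$, since each coordinate is nonnegative and $\si>0$. Hence the minimizer lies in $\co(\Ga)$, and by strict convexity it is unique. The same argument applies to $\Ga^*$, because $\si \geq \ga$ makes every member of $\Ga^*$ nonnegative. Note also that every $\ga^* \in \Ga^*$ has fixed row sum $\si(E)-k$, so both problems are posed over convex sets contained in hyperplanes $\{\eta(E)=\text{const}\}$.

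\textbf{Step 2: change of variables.} Every element of $\co(\Ga^*)$ has the form $\si - \xi$ with $\xi \in \co(\Ga)$, and conversely, since the map is affine. Expanding and using $\xi(E)=k$,
\[
\sum_{e}\si^{-1}(e)\bigl(\si(e)-\xi(e)\bigr)^2 = \si(E) - 2\xi(E) + \sum_e \si^{-1}(e)\xi(e)^2 = \si(E)-2k+\sum_e \si^{-1}(e)\xi(e)^2 .
\]
Thus the objective for $\Ga^*$ at $\si-\xi$ differs from the objective for $\Ga$ at $\xi$ by a constant. The minimizers therefore correspond: $\eta_\circ^* = \si - \eta^*$, which is the claim.

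\textbf{Main obstacle.} The hard part is the first step: being precise about which optimization problem the "optimal density of $\Mod_{2,\si}(\widehat\Ga)$" solves, including the weight convention ($\si$ versus $\si^{-1}$) and any scaling constant coming from Fulkerson duality. I will handle this by mirroring exactly the derivation that yields (\ref{eq:2norm-w}) for bases, where the blocker's optimal density is identified with the minimum $\si^{-1}$-weighted norm point of $\co(\Ga)$. The only inputs I need from that derivation are the fixed row sum, so that the normalizing constant is the same for $\Ga$ and $\Ga^*$, and the nonnegativity of $\si-\ga$.
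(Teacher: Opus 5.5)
Your argument is correct, but it is organized differently from the paper's. The paper works on the primal side. From (\ref{eq:mod2}) it takes the optimal admissible densities $\rho^*=z/\cE_{2,\si^{-1}}(\eta^*)$ and $\rho^*_\circ=z_\circ/\cE_{2,\si^{-1}}(\eta^*_\circ)$, where $z=\si^{-1}\eta^*$ and $z_\circ=\si^{-1}\eta^*_\circ$. It checks that $(\one-z_\circ)/\cE_{2,\si^{-1}}(\si-\eta^*_\circ)$ is admissible for $\Ga$ and that $(\one-z)/\cE_{2,\si^{-1}}(\si-\eta^*)$ is admissible for $\Ga^*$. This gives $\cE_{2,\si^{-1}}(\eta^*)\ge\cE_{2,\si^{-1}}(\si-\eta^*_\circ)$ and $\cE_{2,\si^{-1}}(\eta^*_\circ)\ge\cE_{2,\si^{-1}}(\si-\eta^*)$. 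Your identity $\cE_{2,\si^{-1}}(\si-\xi)=\si(E)-2\xi(E)+\cE_{2,\si^{-1}}(\xi)$ then forces both to be equalities, and uniqueness gives $\eta^*=\si-\eta^*_\circ$.

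You use only $\Adm(\widehat{\Ga})=\Dom(\Ga)$ from (\ref{eq:dom-adm-block}). Once the minimizer is known to lie in the hull, the affine bijection $\xi\mapsto\si-\xi$ of $\co(\Ga)$ onto $\co(\Ga^*)$ shifts the objective by the constant $\si(E)-2k$, so the minimizers correspond. Your version skips the $\rho^*$--$\eta^*$ relation and the two-sided squeeze, and it gives $\Mod_{2,\si^{-1}}(\widehat{\Ga^*})=\Mod_{2,\si^{-1}}(\widehat{\Ga})+\si(E)-2k$ at no extra cost. The paper's version has the small extra payoff of writing the optimal admissible density of one family directly in terms of the optimal density of the other.

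Two remarks on your ``main obstacle'' paragraph.

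First, putting weights $\si^{-1}$ on the blocker problems is the right choice; it is what the paper's proof actually uses through $\cE_{2,\si^{-1}}$. It is also essential. With weights $\si$, the cross term $\sum_e\si(e)^2\xi(e)$ is not constant on $\co(\Ga)$, and the identity fails. For example, with $\Ga=\{(1,0),(0,1)\}$ and $\si=(1,2)$ one gets $\eta^*=(2/3,1/3)$ and $\eta^*_\circ=(1,1)$.

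Second, drop the hedge about a scalar factor. There is none, because the optimal density of the blocker problem is by definition the energy minimizer over $\Adm(\widehat{\Ga})=\Dom(\Ga)$. Your reason why such a factor would be harmless is also wrong. The row sums are $k$ and $\si(E)-k$, so they are not preserved, and a modulus-dependent factor would differ between $\Ga$ and $\Ga^*$ and would break the identity.
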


\begin{proof}
Note that every vector $\ga \in \Ga$ has fixed row sum $k>0$ and $\eta^*$ belongs to the convex hull of vectors in $\Ga$, we have
	\begin{equation}\label{eq:sum-eta}
		\eta^*\cdot \one = k.
	\end{equation} 
Similarly, for $\Ga^*$, we have
	\begin{equation}\label{eq:sum-etastar}
		\eta_{\circ}^*\cdot \one = \si(E) - k.
	\end{equation}
	We define vectors $z$ and $z_{\circ}$ as follows: $ z(e) = \si(e)^{-1}\eta^*(e)$ and $ z_{\circ}(e) =\si(e)^{-1}\eta_{\circ}^*(e)$. By (\ref{eq:mod2}), we have that $\rho^* = z/\cE_{2,\si^{-1}}(\eta^*)$ is the optimal solution for $\Mod_{2,\si}(\Ga)$ and  $\rho_{\circ}^* = z_{\circ}/\cE_{2,\si^{-1}}(\eta_{\circ}^*)$ is the optimal solution for $\Mod_{2}(\Ga^*)$. 
	Denote $a = \si- \eta_{\circ}^*$ and define $z_a$ where $z_a(e) = \si(e)^{-1}a(e)$. Hence, $ z_a = \one - z_{\circ}$.
	We have, by admissibility, for any $\ga \in \Ga$,

	\begin{align*}
		&\left( \si -\ga \right) \cdot z_{\circ} \geq \cE_{2,\si^{-1}}(\eta_{\circ}^*) \\
		& \Leftrightarrow 
		\left( \si -\ga \right) \cdot (\one - z_a) \geq \cE_{2,\si^{-1}}(\si - a)  \\
		& \Leftrightarrow \si(E)-k -k + \ga\cdot z_a \geq \si(E) - 2k + \cE_{2,\si^{-1}}(a) \\
		&\Leftrightarrow \ga\cdot z_a \geq \cE_{2,\si^{-1}}(a).
	\end{align*}
	In other words, the vector \[\frac{z_a}{\cE_{2,\si^{-1}}(a)} =  \frac{\one -z_{\circ}}{\cE_{2,\si^{-1}}(\si-\eta_{\circ}^*)}\] is admissible for $\Ga$.  Hence, since $\rho^*$ is optimal for $\Mod_2(\Ga)$, we have
	\begin{align}\label{eq:e1}
		\frac{1}{\cE_{2,\si^{-1}}(\si -\eta_{\circ}^*)} &= \cE_{2,\si} \left(\frac{\one -z_{\circ}}{\cE_{2,\si^{-1}}(\si-\eta_{\circ}^*)} \right) \geq \cE_{2,\si}(\rho^*) = \frac{1}{\cE_{2,\si^{-1}}(\eta^*)}.
	\end{align}
	Similarly, $\frac{\one -z}{\cE_{2,\si^{-1}}(\si-\eta^*)}$ is admissible for $\Ga^*$ and
	\begin{align}\label{eq:e2}
		\frac{1}{\cE_{2,\si^{-1}}(\si -\eta^*)}  \geq \frac{1}{\cE_{2,\si^{-1}}(\eta_{\circ}^*)}.
	\end{align}
	These two inequalities are equivalent to
	\begin{align*}
		\cE_{2,\si^{-1}}(\eta^*) \geq \cE_{2,\si^{-1}}(\si -\eta_{\circ}^*) &= \si(E) -2(\si(E)-k) +\cE_{2,\si^{-1}}(\eta_{\circ}^*),& (\text{by (\ref{eq:sum-etastar})})
	\end{align*}
	and 
	\begin{align*}
		\cE_{2,\si^{-1}}(\eta_{\circ}^*) \geq \cE_{2,\si^{-1}}(\si -\eta^*) &= \si(E) -2k +\cE_{2,\si^{-1}}(\eta^*).& (\text{by (\ref{eq:sum-eta})})
	\end{align*}
	Consequently, we have shown that		
	\begin{equation*}
		\cE_{2,\si^{-1}}(\eta^*)  - k \geq \cE_{2,\si^{-1}}(\eta_{\circ}^*) -(\si(E)-k) \geq	\cE_{2,\si^{-1}}(\eta^*)  - k.
	\end{equation*}
Therefore, two inequalities in (\ref{eq:e1}) and (\ref{eq:e2}) holds as equalities. Hence, by uniqueness of $\eta^*$ and $\eta_{\circ}^*$, we obtain that $\si - \eta_{\circ}^* = \eta^*$.	
\end{proof}
\subsection{Union of matroids with rank 1}\label{sec:pmf}
Let $M$ and $N$ be two uniform matroids of rank $1$ with the ground set $A$ and $B$, respectively where $A$ = $\{ a_1,a_2,a_3,\dots,a_n\}$ and $B= \{ b_1,b_2,b_3,\dots,b_n\}$. Let $E =A \cup B$.  Let $M \oplus N$ be the matroid direct sum of $M$ and $N$ defined as in (\ref{eq:directsum}). Let $\Ga$ be the base family of $M \oplus N$ with usage vectors being indicator functions. Then \[\Ga = \{ \ga_{a_ib_j} := \{a_i,b_j\}:i,j =1 \dots n\},\] with usage vectors being indicator functions. It is straightforward that $M$ and $N$ are homogeneous. By the serial rule for modulus, $M \oplus N$ is also homogeneous with the universal base $\eta^*(e) = 1/n$ for every $e\in E$. In this section, we study the set of all optimal pmfs $\mu^*$ inducing $ \eta^* $. 

Let $F$ be the set of all bijections $f$ from $A$ to $B$. For each $f \in F$, we define a pmf $\mu_f \in \cP(\Ga)$ which is uniform on the family $\{ \{a_i,f(a_i)\}:i=1,\dots,n\}$. Then, every $\mu_f$ induces $ \eta^* $. Next, we state our main result in this section, this result describes the set of all optimal pmfs inducing $ \eta^* $.
\begin{theorem}\label{thm:pmf-conv}
	A pmf $\mu \in \cP(\Ga)$ induces $ \eta^* $ if and only if it belongs to $\co(\{\mu_f:f \in F\})$.
\end{theorem}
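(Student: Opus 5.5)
The plan is to encode a pmf $\mu \in \cP(\Ga)$ as an $n\times n$ matrix and apply the Birkhoff--von Neumann theorem. To $\mu$ associate the matrix $P=(p_{ij})$ with $p_{ij}:=n\,\mu(\ga_{a_ib_j})$. Then $P$ has nonnegative entries. The marginal condition $\cN^T\mu=\eta^*$ reads
\[\sum_{j}\mu(\ga_{a_ib_j})=\eta^*(a_i)=\tfrac1n,\qquad \sum_i\mu(\ga_{a_ib_j})=\eta^*(b_j)=\tfrac1n,\]
since $a_i$ lies exactly in the bases $\ga_{a_ib_j}$, $j=1,\dots,n$, and similarly for $b_j$. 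So $\mu$ induces $\eta^*$ if and only if $P$ is doubly stochastic. The total mass condition $\sum_{i,j}\mu=1$ is automatic from the row sums. The map $\mu\mapsto P$ is a linear bijection between $\R^{\Ga}$ and $n\times n$ real matrices, and it preserves convex combinations.

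Next, identify the images of the $\mu_f$. For a bijection $f\in F$, write $f(a_i)=b_{\pi(i)}$ for a permutation $\pi$. Then $\mu_f(\ga_{a_ib_j})=\frac1n$ if $j=\pi(i)$ and $0$ otherwise, so $\mu_f$ maps to the permutation matrix of $\pi$. As $f$ ranges over $F$, $\pi$ ranges over all permutations, so $\{\mu_f:f\in F\}$ corresponds exactly to the set of permutation matrices.

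For the ``if'' direction, the set $\cU_{\eta^*}$ of pmfs inducing $\eta^*$ is convex, since it is cut out by linear constraints. It contains each $\mu_f$, as the paper already observes, and hence contains their convex hull. For the ``only if'' direction, if $\mu$ induces $\eta^*$ then $P$ is doubly stochastic. By the Birkhoff--von Neumann theorem, $P=\sum_\pi c_\pi P_\pi$ with $c_\pi\ge0$ and $\sum_\pi c_\pi=1$. Pulling this back through the linear bijection gives $\mu=\sum_f c_f\mu_f\in\co(\{\mu_f\})$.

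The only nontrivial ingredient is Birkhoff--von Neumann, which I would cite as a classical result. If a self-contained argument is wanted, I would prove it by the standard induction on the number of nonzero entries. Hall's theorem, applied to the bipartite support graph of a doubly stochastic matrix, yields a permutation $\pi$ whose entries are all positive. One then subtracts $c\,P_\pi$ with $c=\min_i p_{i\pi(i)}$ and rescales by $1/(1-c)$, which removes at least one positive entry. The main care point is verifying Hall's condition: any $k$ rows carry total mass $k$, and this mass must lie in at least $k$ columns, because each column has sum $1$.
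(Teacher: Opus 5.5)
Your proof is correct, and it takes a different and cleaner route than the paper's. The paper does not pass to a doubly stochastic matrix or cite Birkhoff--von Neumann. Instead it reproves that theorem by hand on the coefficients $u_{a_ib_j}$, always peeling through the \emph{smallest} nonzero coefficient $u_{a_1b_1}$, so that subtracting $u_{a_1b_1}\sum_i\delta_{\ga_{a_if(a_i)}}$ automatically stays nonnegative. The price is that it needs a stronger fact: some positive bijection must pass through the prescribed entry, i.e.\ the support of the minor on $\{a_2,\dots,a_n\}\times\{b_2,\dots,b_n\}$ must have a perfect matching. The paper tries to obtain this by expanding the product (\ref{eq:bigprod}) and invoking the ad hoc matching lemma, Theorem \ref{theo:main}. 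You instead normalise to a doubly stochastic $P$, peel with $c=\min_i p_{i\pi(i)}$ along an \emph{arbitrary} positive permutation, and verify Hall's condition on the full support by a one-line mass count, which avoids all of that.

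This is more than cosmetic, because the paper's step does not go through as written. A term of the expanded product (\ref{eq:bigprod}) has $2(n-1)$ factors, not $(n-1)^2$, so the hypothesis $|E_G|=(n-1)^2$ of Theorem \ref{theo:main} fails for $n\ge 4$. Moreover, an arbitrary positive term need not contain a perfect matching: with all $u_{a_ib_j}>0$ and $n=4$, let every row factor choose $b_2$ and every column factor choose $a_2$. If one wants to keep peeling through the smallest entry, your mass count repairs it. For $S\subseteq\{a_2,\dots,a_n\}$, the rows in $S$ carry mass $|S|/n$, all of which lies in $N(S)\cup\{b_1\}$. Column $b_1$ can absorb at most $1/n-u_{a_1b_1}<1/n$ of it, so $|N(S)|\ge|S|$.

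One small point in your own sketch: when $c=1$ you cannot rescale by $1/(1-c)$. In that case $P=P_\pi$ already, which is the base case of your induction.
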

\begin{proof}
	
	First, for every $f \in F$, the pmf $\mu_f$ induces $ \eta^* $. Hence, any convex combination of pmfs $\mu_f$ induces $ \eta^* $.
	
	Next, let $\mu \in \cP(\Ga)$ induce $ \eta^* $. Denote $ u_{a_ib_j}:= \mu(\ga_{a_ib_j})$ for $i,j=1,\dots,n$. Hence,
	\begin{equation*}
		\mu = \sum\limits_{i,j=1}^n u_{a_ib_j}\delta_{\ga_{a_ib_j}},
	\end{equation*}
	where  $\delta_{\ga_{a_ib_j}}$ is the indicator vector in $\R^{\Ga}_{\geq 0}$ of $\ga_{a_ib_j}$
	Without loss of generality, assume that $u_{a_1b_1}$ is the smallest nonzero coefficient. We want to show that there exists a map $f\in F$ such that $f(a_1)=b_1$ and $u_{a_if(a_i)} > 0$ for $i=1,\dots,n$.  Fix $i > 1$, note that $\eta^*(a_i)=\eta^*(b_1)$ for all $i=1,\dots,n$, we have 
	\begin{align*}
		u_{a_ib_1} +\sum\limits_{j=2}^n u_{a_ib_j} = \eta^*(a_i)=\eta^*(b_1)\geq u_{a_ib_1} +u_{a_1b_1} > u_{a_ib_1} +0
	\end{align*}
	Therefore, for every $i > 1$,
	\begin{align*}
		\sum\limits_{j=2}^n u_{a_ib_j} >0.
	\end{align*}
	Similarly, for every $j > 1$,
	\begin{align*}
		\sum\limits_{i=2}^n u_{a_ib_j} >0.
	\end{align*}
	Hence,
	\begin{equation}\label{eq:bigprod}
		\prod_{i = 2}^n \left(\sum\limits_{k=2}^n u_{a_ib_k} \right)\prod_{j = 2}^n \left(\sum\limits_{h=2}^n u_{a_hb_j} \right) >0.
	\end{equation}
	We distribute this product, there exists at least a positive term, denoted by $x$, in the summation. 	This term $x$ is a product of $(n-1)^2$ numbers of the form $u_{a_ib_j}$. 
	We claim that there exists $(n-1)$ numbers among those $(n-1)^2$ numbers that forms a set $\{u_{a_if(a_i)}: i=2,\dots,n \}$ for some $f \in F$. Assume that our claim is true, we have $u_{a_if(a_i)} >0$ for $i=2,\dots,n$. We rewrite \[\mu =  u_{a_1b_1}\sum\limits_{i=1}^n \delta_{\ga_{a_if(a_i)}} + \mu-u_{a_1b_1}\sum\limits_{i=1}^n \delta_{\ga_{a_{i}f(a_i)}}.\]
	Note that the vector $\mu_1 := \mu-u_{a_1b_1}\sum\limits_{i=1}^n \delta_{\ga_{a_if(a_i)}}$ is a nonnegative vector since $u_{a_if(a_i)} \geq u_{a_1b_1} $ for all $i$. Furthermore, the sum of this vector equals $1 - nu_{a_1b_1}$. Hence, the vector $\mu_1/(1 - nu_{a_1b_1})$ is a pmf in $\cP(\Ga)$. Moreover, it is straightforward that this vector induces $ \eta^* $ and it has less number of nonzero numbers than $\mu$. We apply this argument iteratively, this process ends when we get the zero vector. Therefore,  $\mu$ can be written as a convex combination of multiple vectors $\mu_f$ for $f \in F$.
	
	It remains to prove the claim, we construct a bipartite graph $G=(L,R,E_G)$ where $L$ = $\{ a_2,a_3,\dots,a_n\}$ and $B= \{b_2,b_3,\dots,b_n\}$. For each $u_{a_ib_j}$ appearing in $x$, we add an edge connecting $a_i$ and $b_j$ to $G$ and we allow multiedges in $G$. Thus, $|E_G| = ( n-1)^2$. By (\ref{eq:bigprod}), for each $i$, $a_i$ appears in $x$ at most $1+(n-1) =n$ times and for each $j$, $b_j$ appears in $x$ at most $(n-1)+1 =n$ times. Hence, the degree of each vertex in $G$ is at most $n$. 
	Now, by applying Theorem \ref{theo:main} which is stated below, $G$ has a perfect matching. This demonstrates our claim.
\end{proof}
To prove Theorem \ref{theo:main}, we recall Hall’s Marriage Theorem.
\begin{theorem}[Hall’s Marriage Theorem]
	Let $G = (L, R, E_G)$ be a bipartite graph with $|L| = |R|$ and have multiedges. Then, $G$ has a perfect matching if and only if for every 
	$S \subseteq L$, we have $|N(S)| \geq |S|$ where \[N(S):=\{v \in R: \exists u \in S \text{ such that } \{u, v\} \in E\}\] is its set of neighbors.
\end{theorem}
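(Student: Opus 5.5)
The statement is Hall's Marriage Theorem for bipartite multigraphs, and I would prove it in the classical way, reducing first to the simple-graph case. Multiedges play no role: both the existence of a perfect matching and the neighborhoods $N(S)$ depend only on which pairs $\{u,v\}$ are joined by at least one edge. So I replace $G$ by its underlying simple bipartite graph and assume from now on that $G$ is simple.

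\emph{Necessity} is immediate. Suppose a perfect matching $\mathcal{M}$ exists and let $S \subseteq L$. The matched partners of $S$ are $|S|$ distinct vertices of $R$, and all of them lie in $N(S)$. Hence $|N(S)| \geq |S|$.

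\emph{Sufficiency} I would prove by strong induction on $n = |L| = |R|$, following Halmos and Vaughan. The case $n = 1$ is trivial: Hall's condition gives the single vertex of $L$ a neighbor. For the induction step I split into two cases according to whether Hall's condition has slack.

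\textbf{Case 1: slack.} Suppose $|N(S)| \geq |S| + 1$ for every nonempty proper subset $S \subsetneq L$. Pick any $u \in L$ and any neighbor $v$ of $u$. Delete $u$ and $v$. In the remaining graph, every nonempty $S \subseteq L - \{u\}$ loses at most one neighbor, so it still satisfies $|N(S)| \geq |S|$. By induction the remaining graph has a perfect matching, and adding the edge $\{u,v\}$ gives a perfect matching of $G$.

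\textbf{Case 2: tight set.} Suppose some nonempty proper subset $S_0 \subsetneq L$ has $|N(S_0)| = |S_0|$. I would split $G$ into two subgraphs.
\begin{itemize}
\item $G_1$ is the subgraph induced on $S_0 \cup N(S_0)$. Hall's condition holds in $G_1$ directly, since for $S \subseteq S_0$ all neighbors of $S$ already lie in $N(S_0)$.
\item $G_2$ is the subgraph induced on $(L - S_0) \cup (R - N(S_0))$. It is balanced, since $|L - S_0| = |R - N(S_0)|$.
\end{itemize}
To check Hall's condition in $G_2$, take $T \subseteq L - S_0$ and write $N_{G_2}(T) = N(T) - N(S_0)$. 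Then
\[
|N_{G_2}(T)| = |N(T \cup S_0)| - |N(S_0)| \geq |T| + |S_0| - |S_0| = |T|,
\]
where the equality uses that $N(T \cup S_0) = N(T) \cup N(S_0)$ is a disjoint union of $N(S_0)$ and $N(T) - N(S_0)$, and the inequality is Hall's condition applied to $T \cup S_0$ in $G$. Both $G_1$ and $G_2$ have strictly fewer than $n$ vertices on each side, so by induction each has a perfect matching, and their union is a perfect matching of $G$.

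The main obstacle, mild as it is, is the verification of Hall's condition in $G_2$ displayed above; everything else is bookkeeping. An alternative route would be to derive the result from König's theorem, or from max-flow/min-cut on the network $s \to L \to R \to t$ with unit capacities. I would use the self-contained induction, since the paper does not set up any flow machinery.
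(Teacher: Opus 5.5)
Your proof is correct. The reduction to the underlying simple graph, the necessity direction, and the Halmos--Vaughan induction all go through, including the slack versus tight-set case split and the check $|N_{G_2}(T)| = |N(T\cup S_0)| - |N(S_0)| \geq |T|$.

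There is no proof in the paper to compare against. The paper only recalls Hall's theorem as a classical result. It uses just the sufficiency direction, in contrapositive form, inside the proof of Theorem \ref{theo:main}: if there is no perfect matching, then some $S \subseteq L$ has $|N(S)| \leq |S|-1$. Your self-contained induction supplies exactly that direction.

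Your remark that multiedges are irrelevant is the right one to make here. The auxiliary graph built in the proof of Theorem \ref{thm:pmf-conv} genuinely has multiedges. Multiplicities matter only for the degree and edge counting in Theorem \ref{theo:main}, not for Hall's condition.
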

Finally, we provide Theorem \ref{theo:main} which is used in the proof of Theorem \ref{thm:pmf-conv}.
\begin{theorem}\label{theo:main}
	Let $G = (L, R, E_G)$ be a bipartite graph with $|L| = |R| = n \geq 1$ and have multiedges. Suppose that the degree of each vertex in $G$ is at most $n+1$ and $|E_G| = n^2$. Then, $G$ has a perfect matching.
\end{theorem}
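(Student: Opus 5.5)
The plan is to verify Hall's condition directly by a double-counting argument on edges, using only the degree bound $n+1$ and the total edge count $|E_G|=n^2$. Multiedges cause no difficulty: Hall's Marriage Theorem, as stated above, allows them, and $N(S)$ depends only on which pairs are adjacent, not on multiplicities. So it suffices to show $|N(S)|\geq |S|$ for every $S\subseteq L$. The case $S=\emptyset$ is trivial, so I fix a nonempty $S\subseteq L$ with $s:=|S|$. I argue by contradiction, assuming $m:=|N(S)|\leq s-1$.

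\emph{Lower bound.} First I bound from below the number $e(S)$ of edges (counted with multiplicity) incident to $S$. Every edge of $G$ has exactly one endpoint in $L$, so
\[
e(S)=|E_G|-e(L- S).
\]
Each vertex of $L- S$ has degree at most $n+1$, so $e(L- S)\leq (n-s)(n+1)$. Therefore
\[
e(S)\geq n^2-(n-s)(n+1)=s(n+1)-n.
\]

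\emph{Upper bound.} Next I bound $e(S)$ from above. Every edge incident to $S$ has its other endpoint in $N(S)$, and each vertex of $N(S)$ has degree at most $n+1$. Hence
\[
e(S)\leq m(n+1)\leq (s-1)(n+1)=s(n+1)-n-1.
\]

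The two bounds give $s(n+1)-n\leq s(n+1)-n-1$, which is a contradiction. Hence $|N(S)|\geq|S|$ for every $S\subseteq L$, and Hall's Marriage Theorem yields a perfect matching.

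I expect no real obstacle here. The only point needing care is to count edges incident to $S$ via the complement $L- S$, rather than via the degrees of vertices in $S$ themselves, since individual degrees have no lower bound. The hypothesis $|E_G|=n^2$ is used exactly in this step. The margin of $1$ in the final inequality is precisely what the degree bound $n+1$ (rather than something larger) allows.
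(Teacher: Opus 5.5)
Your proof is correct and is essentially the paper's argument. The paper also takes a Hall violator $S$ with $|N(S)|\leq |S|-1$. It bounds the edges between $S$ and $N(S)$ above by $|N(S)|(n+1)$, and below by $n^2-(n-|S|)(n+1)$ by counting the remaining edges at $L-S$, which yields the contradiction $-1\geq 0$.
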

\begin{proof}
	Assume that $G$ does not have a perfect matching. Then, by Hall’s Marriage Theorem, there exists $S \subseteq L$ such that $|N(S)| \leq |S|-1$. Let $A$ be the set of edges connecting $S$ and $N(S)$. Let $B = E_G-A$. 
	Then 
	\begin{align*}
		(|S|-1)(n+1) & \geq |N(S)|(n+1) &\\
		& \geq |A|&(\text{Each vertex degree is at most $n+1$}) \\ 
		& = |E_G| - |B|&\\
		& \geq n^2 - (n-|S|)(n+1)&(\text{Each vertex degree is at most $n+1$}) 
	\end{align*}
	This is equivalent to 
	\[|S|(n+1) - n -1 \geq n^2 -n^2-n +|S|(n+1),\]
	which is equivalent to $-1 \geq 0$, a contradiction.
\end{proof}

\section{Two optimization problems for matroid structures}
\subsection{Removing elements with respect to having k edge-covering disjoint independent sets in a matroid}\label{sec:removing}
Let $M=(E,\cI)$ be an unweighted matroid. Let $a(M)$ denote the minimum number of bases whose union equals $E(M)$. In this section, we study the following problem: Given a matroid $M$ with $a(M) > k$, how many elements should be deleted from $M$ so that the resulting matroid $M'$ has $a(M') \leq k$?  
Given a matroid $M$ and an integer $k \geq 1$, we define $N(M, k)$ to be the maximum integer $l > 0$ such that $M$ 
has a restriction $X$ of $M$ such that $|X| = l$ and $a(M|X)\leq k$. Note that if $a(M) \leq k$, then we have $X=E(M)$.
The main purpose of this section is to determine $N(M, k)$ in terms of other related concepts of $M$. We start with the following lemma.

\begin{lemma}\label{lem:seriala}
	Let  $M=(E,\cI)$ be a matroid. Let $X $ be a subset of $E$. If $a(M/X) \leq k$ and $a(M|X) \leq k$, then $a(M) \leq k$.
\end{lemma}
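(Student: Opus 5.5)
We use the covering criterion of Theorem \ref{edmonds-SD}(2): $a(N)\le k$ if and only if $|Y|\le k\, r_N(Y)$ for every $Y\subseteq E(N)$. Since a loopless matroid has $r(Y)>0$ for nonempty $Y$, this is the same as $D(N)\le k$. So the hypotheses give two inequalities. For every $Y\subseteq X$ we have $|Y|\le k\,r(Y)$, because $r_{M|X}=r$ on subsets of $X$. For every $Z\subseteq E-X$ we have $|Z|\le k\,r_{M/X}(Z)=k\bigl(r(X\cup Z)-r(X)\bigr)$. The goal is to show $|W|\le k\,r(W)$ for an arbitrary $W\subseteq E$.

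Fix $W\subseteq E$ and split it as $W=(W\cap X)\cup(W-X)$. Apply the first inequality to $Y=W\cap X$ and the second to $Z=W-X$. This gives
\[
|W|=|W\cap X|+|W-X|\le k\Bigl(r(W\cap X)+r(X\cup W)-r(X)\Bigr).
\]
Submodularity applied to $W$ and $X$ gives $r(W)+r(X)\ge r(W\cup X)+r(W\cap X)$. Hence
\[
r(W\cap X)+r(X\cup W)-r(X)\le r(W),
\]
and therefore $|W|\le k\,r(W)$. By Theorem \ref{edmonds-SD}(2), this gives $a(M)\le k$.

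The only delicate point is the degenerate cases. If $r(M)=0$, or if one of the minors $M|X$ or $M/X$ has rank $0$ (for example $X=\emptyset$ or $X=E$), the hypothesis $a(\cdot)\le k$ should be read as the inequality $|Y|\le k\,r(Y)$ for all $Y$. Under that reading the argument above goes through without change, because we only ever used the two inequalities and never the rank-positivity condition in Theorem \ref{edmonds-SD}. Loops in $M/X$ need no special care either: an element $e\in E-X$ that is a loop of $M/X$ would violate $|Z|\le k\,r_{M/X}(Z)$ for $Z=\{e\}$, so the hypothesis already excludes them.
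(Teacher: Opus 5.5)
Your proof is correct, but it takes a different route from the paper's.

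The paper's argument is direct and constructive. It takes $k$ bases of $M|X$ covering $X$ and $k$ bases of $M/X$ covering $E-X$, repeating bases if fewer suffice. It then pairs them up, using the standard fact that the union of a base of $M|X$ and a base of $M/X$ is a base of $M$. The resulting $k$ bases of $M$ cover $E$. No min--max theorem is needed, and the covering is exhibited explicitly.

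Your argument instead goes through Edmonds' covering theorem in three steps:
\begin{enumerate}
\item The easy direction turns the two hypotheses into the rank inequalities $|Y|\le k\,r(Y)$ for $Y\subseteq X$ and $|Z|\le k\bigl(r(X\cup Z)-r(X)\bigr)$ for $Z\subseteq E-X$.
\item Submodularity glues these into $|W|\le k\,r(W)$ for all $W\subseteq E$.
\item The hard direction of Theorem~\ref{edmonds-SD}(2) converts this back into a covering of $M$.
\end{enumerate}
The hard direction is applied only to $M$ itself, which has positive rank by the paper's standing assumption, so your treatment of the degenerate cases is fine.

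The cost of your route is reliance on the deep direction of Edmonds' theorem, where the paper needs only an elementary fact about minors. The gain is that your submodularity step never uses that $k$ is an integer. It therefore proves the fractional statement $D(M)\le\max\{D(M|X),D(M/X)\}$, and the lemma follows from it via Edmonds' theorem.
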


\begin{proof}
	We have that $M/X$ has $k$ bases covering $E-X$ and $M|X$ has $k$ bases covering $X$. Note that the union of a base of $M/X$ and  a base of $M|X$ is a base of $M$. So, $M$ has $k$ bases covering $E$.
\end{proof}

The next lemma describes the rank of a set $X$ at which $N(M,k)$ attains its maximum.

\begin{lemma}\label{lem:ranks}
	Given a matroid $M$ and a positive integer $k$, let $X \subset E$ be a set such that  $N(M,k)$ attains its maximum at $X$. Then $r(M|X)=r(M)$.
\end{lemma}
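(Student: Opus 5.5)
We argue by contradiction. Suppose $X$ attains $N(M,k)$, so $|X|=N(M,k)$ and $a(M|X)\le k$, but $r(X)<r(M)$. Then $\cl(X)\neq E$, so some element $e\in E-\cl(X)$ exists. We will show that $X\cup\{e\}$ is a strictly larger set with $a(M|(X\cup\{e\}))\le k$, contradicting the maximality of $|X|$.

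We first write $Y=X\cup\{e\}$ and apply Lemma~\ref{lem:seriala} to the matroid $M|Y$ with the subset $X$. The restriction $(M|Y)|X=M|X$ satisfies $a\le k$ by assumption. The contraction $(M|Y)/X$ has ground set $\{e\}$. Its rank is $r(X\cup\{e\})-r(X)=1$, since $e\notin\cl(X)$. So $\{e\}$ is itself a base, and $a((M|Y)/X)=1\le k$. Lemma~\ref{lem:seriala} then gives $a(M|Y)\le k$. Since $|Y|=|X|+1$, this contradicts the definition of $N(M,k)$ as the maximum.

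Concretely, the bases are easy to write down. Take $k$ bases $B_1,\dots,B_k$ of $M|X$ covering $X$. Each $B_j\cup\{e\}$ is independent because $e\notin\cl(X)$, and it has size $r(X)+1=r(Y)$. Hence each $B_j\cup\{e\}$ is a base of $M|Y$, and together they cover $Y$.

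The main point needing care is the degenerate situation where $X$ could be empty or of rank $0$, and the convention for $a$ there. The definition of $N(M,k)$ requires $l>0$, so $X\neq\emptyset$. Because $M$ is loopless, $r(X)>0$ and $M|X$ has genuine bases. The argument above also requires $k\ge1$ so that at least one base exists; this is given. No other obstacle is expected.
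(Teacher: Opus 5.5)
Your proof is correct and follows essentially the paper's argument: pick an element $e$ outside the span of $X$ and show that $X\cup\{e\}$ is still covered by $k$ bases, contradicting the maximality of $|X|$. The only cosmetic difference is that the paper appends $e$ to $B_1$ alone, leaving $B_2,\dots,B_k$ as independent sets of $M|(X\cup\{e\})$ that implicitly extend to bases, whereas you append $e$ to every $B_j$ (or invoke Lemma~\ref{lem:seriala}), which makes the covering-by-bases step explicit.
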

\begin{proof}
	 Since $a(M|X) \leq k$, by Theorem \ref{edmonds-SD}, the matroid $M|X$ has $k$ bases $B_1,\dots, B_k$ whose union equals $X$. Hence,  $B_1$ is independent in $M$. Assume that $r(B_1)<r(M)$, there exist an element $e \in E$ such that $r(B_1 \cup \lbr e \rbr) > r(B_1)$. Since $r(B_1)=r(X)$, so $e \notin X$. Consider $X'=X \cup  \lbr e \rbr$, we have that $M|X'$ has $k$ independent sets $B_1 \cup \lbr e \rbr,B_2,\dots , B_k$ whose union equals $X'$. This contradicts with the optimality of $N(M,k)$.
\end{proof}

\begin{lemma}\label{lem:eta-and-k}
	Given a matroid $M$ and a positive integer $k$ such that $a(M) > k$. If $k \leq S(M)$, then 
	\begin{equation}
		N(M,k) = kr(M).
	\end{equation}
\end{lemma}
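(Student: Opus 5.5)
We show that $N(M,k)\le k\,r(M)$ holds in general and that the hypothesis $k\le S(M)$ gives the matching lower bound.

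\emph{Upper bound.} Let $X$ attain $N(M,k)$. Then $a(M|X)\le k$, so by Theorem~\ref{edmonds-SD}(2) applied to $M|X$ with the set $X$ itself, $|X|\le k\,r(X)\le k\,r(M)$. Lemma~\ref{lem:ranks} even gives $r(X)=r(M)$, although we do not need it here.

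\emph{Lower bound.} Since $\tau(M)=\lfloor S(M)\rfloor$ and $k\le S(M)$ with $k$ an integer, we have $\tau(M)\ge k$. Hence $M$ has $k$ pairwise disjoint bases $B_1,\dots,B_k$. Put $X=B_1\cup\cdots\cup B_k$, so that $|X|=k\,r(M)$. Each $B_i$ is a base of $M|X$, because $r(X)=r(M)$ and $B_i\subseteq X$. So $X$ is covered by $k$ bases of $M|X$, which gives $a(M|X)\le k$. Therefore $N(M,k)\ge k\,r(M)$, and combined with the upper bound this yields equality.

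The only delicate step is turning the real inequality $k\le S(M)$ into $\tau(M)\ge k$. Because $k$ is an integer, $k\le S(M)$ implies $k\le\lfloor S(M)\rfloor=\tau(M)$ by the consequence of Theorem~\ref{edmonds-SD} recalled in the introduction. The hypothesis $a(M)>k$ is used only to make the problem nontrivial, so that the optimal $X$ is a proper subset of $E$. It is consistent with the result, since $|E|>k\,r(M)$ in that case.
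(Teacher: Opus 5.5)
Your proof is correct and has the same two-step structure as the paper's. The upper bound comes from covering $X$ by $k$ bases of $M|X$, and the lower bound from the union of $k$ disjoint bases, which exist because $\tau(M)=\lfloor S(M)\rfloor\ge k$. Your version is slightly more direct in two places: you note that each $B_i$ is a base of $M|X$ covering $X$, where the paper certifies $a(M|X)\le k$ through homogeneity of $M|X$ via the uniform pmf on $B_1,\dots,B_k$; and your upper bound uses only $r(X)\le r(M)$ instead of Lemma~\ref{lem:ranks}.
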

\begin{proof}
	
	Let $X \subset E$ be a set at which $N(M,k)$ attains its maximum. Thus, $a(M|X) \leq k$, and $X$ can be covered by $k$ bases of $M|X$. So, $N(M,k)=|X| \leq kr(M|X) = kr(M)$ by Lemma \ref{lem:ranks}.
	
	Since $k \leq S(M)$, by Theorem \ref{edmonds-SD}, $M$ has $k$ disjoint bases $B_1,\dots, B_k$. Let $X = \bigcup_{i=1}^{k}B_{i}$. By choosing       a uniform pmf supported by $B_1,\dots, B_k$, we have that $M|X$ is homogeneous. Hence,  \[S(M|X)=D(M|X)=a(M|X)=k.\] So, $N(M,k) \geq kr(M)$. In conclusion,  $N(M,k) = kr(M)$.
\end{proof}
The following theorem is the main theorem in this section.
\begin{theorem}\label{thm:eta-and-k}
			Given a matroid $M$ and a positive integer $k$ such that $a(M) > k$ (equivalently, $D(M)>k$).
			Let $\cB$ the base family of $M$ and $r$ be the rank function of $M$.
			Let $\eta^*_1 <\eta^*_2 <\dots <\eta^*_m$ be $m$ distinct values of the universal density $\eta^*$ of $M$.
			Define subsets $S_i \subseteq E$ $(i = 1, 2, \ldots, m)$ by 
			\begin{equation}\label{eq:Si2}
				S_i = \{e \in E \mid \eta^*(e) \leq \eta^*_i\} \quad (i = 1, 2, \ldots, m).
			\end{equation}
Note that $\eta^*_1 = 1/D(M) <1/k$. Let $i(k)$ denote the largest subscript such that $\eta^*_{i(k)} \leq 1/k.$ Then
	\begin{equation}\label{eq:eta-and-k}
		N(M,k)=  |E(M)-S_{i(k)}|+kr(S_{i(k)}).
	\end{equation}
\end{theorem}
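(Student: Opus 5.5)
The plan is to prove the two inequalities separately. Throughout, write $S:=S_{i(k)}=\{e\in E:\eta^*(e)\le 1/k\}$. By the definition of $i(k)$, every $e\in E-S$ satisfies $\eta^*(e)\ge \eta^*_{i(k)+1}>1/k$ whenever $i(k)<m$; if $i(k)=m$ then $S=E$.

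\emph{Step 1: the universal density of $M|S$ and of $M/S$.} By Theorem \ref{thm:lexi}(ii) applied with $\si\equiv 1$, we have $\eta^*(S_j)=r(S_j)$ for all $j$. In particular $\eta^*(S)=r(S)$. By \cite[Lemma 3.12]{truong2024modulus}, every base $B$ in the support of an optimal pmf then satisfies $|B\cap S|=r(S)$. So $B\cap S$ is a base of $M|S$ and $B-S$ is a base of $M/S$. Hence $\eta^*|_S\in\co(\cB(M|S))$ and $\eta^*|_{E-S}\in\co(\cB(M/S))$.

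For $M|S$, the level sets of $\eta^*|_S$ are $S_1,\dots,S_{i(k)}$, and these still satisfy condition (ii) of Theorem \ref{thm:lexi}. Therefore $\eta^*|_S$ is the universal density of $M|S$.

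For $M/S$, the statement that $\eta^*|_{E-S}$ is its universal density is exactly the contraction statement after (\ref{eq:pre}), since $E-S=E_{i(k)+1}$. Alternatively, one checks (ii) directly using $r_{M/S}(S_j-S)=r(S_j)-r(S)$.

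Consequently, using $S(\cdot)=1/\eta^*_{max}$ and $D(\cdot)=1/\eta^*_{min}$:
\begin{itemize}
\item $S(M|S)=1/\eta^*_{i(k)}\ge k$;
\item if $S\ne E$, then $D(M/S)=1/\eta^*_{i(k)+1}<k$, so $a(M/S)=\lceil D(M/S)\rceil\le k$.
\end{itemize}

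\emph{Step 2: lower bound.} Since $S(M|S)\ge k$, Theorem \ref{edmonds-SD} gives $k$ disjoint bases $B_1,\dots,B_k$ of $M|S$. Let $Y=\bigcup_i B_i$. Then $|Y|=k\,r(S)$ and $a(M|Y)\le k$; this is the argument of Lemma \ref{lem:eta-and-k}, where $r(M|Y)=r(S)$ and each element of $Y$ lies in some $B_i$.

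Now set $X=Y\cup(E-S)$. Because $Y$ spans $S$, the contraction $(M|X)/Y$ coincides with $M/S$ on $E-S$, so $a((M|X)/Y)\le k$. Lemma \ref{lem:seriala}, applied to $M|X$ with the subset $Y$, gives $a(M|X)\le k$. Hence
\[
N(M,k)\ge |X|=|E-S|+k\,r(S).
\]

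\emph{Step 3: upper bound.} Let $X$ be any set with $a(M|X)\le k$. By Theorem \ref{edmonds-SD}(2), applied to the subset $X\cap S$ of $M|X$,
\[
|X\cap S|\le k\,r(X\cap S)\le k\,r(S).
\]
Trivially $|X-S|\le |E-S|$. Adding these gives $|X|\le |E-S|+k\,r(S)$.

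The main obstacle is Step 1: transferring the universal density to the restriction $M|S$ and the contraction $M/S$. This needs the tightness $\eta^*(S)=r(S)$ together with the characterization in Theorem \ref{thm:lexi}. Everything after that is bookkeeping with Edmonds' theorem and Lemma \ref{lem:seriala}.
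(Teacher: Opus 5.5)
Your proof is correct and follows essentially the same route as the paper. You split $E$ along $S_{i(k)}$, use $S(M|S)\ge k$ to get $k$ disjoint bases of $M|S$, glue their union to all of $E-S$ via $D(M/S)<k$ and Lemma \ref{lem:seriala}, and bound $|X\cap S|$ by $k\,r(S)$. The differences are only streamlining: you avoid the paper's case split and its detour through Lemma \ref{lem:eta-and-k} and Lemma \ref{lem:ranks} by applying Edmonds' covering inequality directly to $X\cap S$ and by using the union of the disjoint bases (which visibly spans $S$) instead of an optimal set for $N(M|S,k)$, and you justify the universal densities of $M|S$ and $M/S$ through Theorem \ref{thm:lexi} rather than citing the serial decomposition.
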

\begin{proof}
First, note that if $1/S(M) = \eta^*_m  \leq 1/k$, then  $S_{i(k)} = E$ and the equation (\ref{eq:eta-and-k}) follows from Lemma \ref{lem:eta-and-k}.

Next, we assume that $1/S(M) = \eta^*_m > 1/k$. Then $S(M)<D(M)$, and $M$ is not homogeneous and $m>1$.
Recall that $i(k)$ is the largest subscript such that $\eta^*_{i(k)} \leq 1/k.$ By matroid decomposition generated by the serial rule for modulus of bases of matroids (see \cite{truong2024modulus}), we have \[D(M|S_{i(k)}) = 1/\eta^*_1 = D(M)>k, \quad S(M|S_{i(k)}) = 1/\eta^*_{i(k)} \geq k,\] and \[D(M/S_{i(k)}) = 1/\eta^*_{i(k)+1}<k,\quad S(M/S_{i(k)}) = 1/\eta^*_m = S(M).\] 
Hence, $a(M|S_{i(k)}) >k$.  Denote $M' =  M|S_{i(k)}$, by Lemma \ref{lem:eta-and-k}, $N(M',k) = kr(S_{i(k)})$. Let $X \subset E$ be a set at which  $N(M,k)$ attains its maximum, we have
\begin{align*}
	a\bigl(M'|(X \cap S_{i(k)})\bigr) &= a\bigl((M|X)|(X \cap S_{i(k)})\bigr) &\\
	&\leq a(M|X) & (\text{by Lemma \ref{lem:5sd}})\\
	&\leq k &(\text{by the definition of } X).
\end{align*} 
 This implies that $|X \cap S_{i(k)}| \leq N(M',k)$. Let $X' \subset S_{i(k)}$ be a set at which $N(M',k)$ attains its maximum, then
 \begin{align*} 
 	|X| &= |X \cap (E-S_{i(k)})| +  |X \cap S_{i(k)}|\\
 	&\leq |E-S_{i(k)}| + |X \cap S_{i(k)}|\\
 	&\leq  |E-S_{i(k)}| + N(M',k)\\
 	&= |E-S_{i(k)}| + |X'|\\
 &	= |E-S_{i(k)}| + kr(S_{i(k)}).
 \end{align*}
 Define $Y:= (E-S_{i(k)}) \cup X'$ and $M'':=M|Y$. We will show later that $a(M'') \leq k$. By definition,  $M''|X' = M|X' = M'|X'$. Hence, we have $a(M''|X')= a(M'|X') \leq k$. By Lemma \ref{lem:ranks} for $N(M',k)$, we have $r(X')=r(M')=r(S_{i(k)})$. Using this fact, we claim that 
 \begin{equation}\label{cl:1}
 	M|\bigl((E-S_{i(k)})\cup X'\bigr)/X'=  M/S_{i(k)}.
 \end{equation}
 To prove this claim, let $B$ be a maximal independent set in $X'$. Since $r(X')=r(S_{i(k)})$, $B$ is a base of $M|S_{i(k)}$. 
 By definition of matroid contraction, we have that any base $B_1$ of $M/S_{i(k)}$ satisfies $B_1 \subset E - S_{i(k)}$ and $B_1 \cup B$ is a base of $M$. In other words, $B_1$ satisfies $B_1 \subset (((E-S_{i(k)})\cup X') - X')$, $B_1 \cup B$ is a base of $M$, and $B$ is a base of $M|X'$. Thus, $B_1$ is a base of  $(M|((E-S_{i(k)})\cup X')/X'$. 
Therefore, two matroids $(M|((E-S_{i(k)})\cup X')/X'$ and $M/S_{i(k)}$ have the same ground set and the same base family, and hence, they are identical. So, our claim (\ref{cl:1}) is true and this implies that \[D(M''/X')= D(M/S_{i(k)}) =  1/\eta^*_{i(k)+1} <k.\] Thus, $a(M''/X') \leq k$. By using Lemma \ref{lem:seriala} for $M''/X'$ and $M''|X'$, we have that $a(M'') \leq k$. Therefore, \[|E-S_{i(k)}| + kr(S_{i(k)}) = |E-S_{i(k)}| + |X'| = |(E-S_{i(k)}) \cup X'| \leq N(M,k).\]
 In conclusion, we obtain the equality (\ref{eq:eta-and-k}).
\end{proof}

\subsection{Addable edges to graphs}\label{addable}
In the last section, we wanted to remove some elements from a matroid in order to decrease the matroid's arboricity. In this section, we focus on a different problem with the case of graphic matroid.
Let $G=(V,E)$ be an undirected connected graph. Recall that the arboricity $a(G)$ is the minimum number of spanning trees whose union equals $E(G)$. In this section, we study the question: Given a graph $G$ with $a(G) \leq k$, what kind of new edges $e$ can be added to $G$ so that $a(G+e) \leq k$? Denote $E_k(G)$ the set of edges $e = (v_1,v_2)$ where $v_1,v_2 \in V$ that can be added to $G$ such that $a(G+e) =k.$ If $a(G) = 1$, then $G$ is a spanning tree, and the addition of any edge would strictly increase its arboricity of $G$. From now on, we assume that $a(G)=k \geq 2.$
	Let's start with some notations. Let $H$ be a subgraph of $G$ such that every connected component of $H$ is a vertex-induced subgraph of $G$, we denote $G/H$ a graph obtained by identifying vertices in each connected component of $H$ in $G$ and removing self-loops afterwards. Now, we provide the serial rule for the arboricity of graphs, which is a corollary of Lemma \ref{lem:seriala}.
\begin{lemma}\label{lem:kforests}
	Given a connected graph $G$ and a positive integer $k$, let $H$ be a subgraph of $G$ such that every connected component of $H$ is a vertex-induced subgraph of $G$. If $a(G/H) \leq k$ and $a(H) \leq k$, then $a(G) \leq k$. 
\end{lemma}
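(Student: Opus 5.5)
The plan is to reduce Lemma~\ref{lem:kforests} to Lemma~\ref{lem:seriala}, applied to the graphic matroid $M=M(G)$ with $X=E(H)$. The work is in checking that the graph operations match the matroid operations. First, $a(H)$ should be $a(M|E(H))$. We interpret $a(H)$ as the minimum number of spanning forests of $H$ (a spanning tree on each component) needed to cover $E(H)$. Bases of $M|E(H)$ are exactly maximal forests of $H$, so $a(H)=a(M|E(H))$ under this interpretation. Second, $a(G/H)$ should be $a(M/E(H))$. This is the step needing care.

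For the contraction step I will use the standard fact that contracting an edge set $X$ in a graphic matroid gives the graphic matroid of the graph in which each component of $(V,X)$ is identified to a single vertex. Loops of $M/X$ correspond to the self-loops created, namely edges of $E-X$ with both endpoints in one component of $H$. Because every component of $H$ is a vertex-induced subgraph of $G$, every such edge already lies in $E(H)$. So no loops arise, and removing self-loops deletes nothing outside $X$. Hence the ground set of $M(G/H)$ is $E-E(H)$, and $M(G/H)=M/E(H)$, which gives $a(G/H)=a(M/E(H))$. Parallel edges may appear in $G/H$. They are kept as a multigraph, and that does not affect the matroid identification.

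Then Lemma~\ref{lem:seriala} gives $a(M)\le k$, that is, $M$ has $k$ bases covering $E$. Since $G$ is connected, bases of $M(G)$ are spanning trees, so $a(G)\le k$. The proof can also be made concrete directly. Take $k$ forests $F_1,\dots,F_k$ of $H$ that cover $E(H)$, each maximal (extend each one to a spanning forest of $H$, which does not spoil the covering). Take $k$ spanning trees $T_1,\dots,T_k$ of $G/H$ covering its edges. Then each $F_i\cup T_i$ is a spanning tree of $G$, and together they cover $E(G)$.

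The main obstacle I expect is the precise identification of $G/H$ with $M/E(H)$. In particular, the vertex-induced hypothesis must be used to guarantee that no edge outside $H$ is lost as a self-loop. I would state this step explicitly rather than wave at it.
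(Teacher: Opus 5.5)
Your proposal is correct and follows the paper's own proof, which simply applies Lemma~\ref{lem:seriala} to the graphic matroid $M(G)$ with $X=E(H)$. You also spell out a step the paper leaves implicit: the vertex-induced hypothesis ensures that no edge outside $H$ becomes a self-loop, so $M(G)/E(H)$ is loopless and equals $M(G/H)$, which is what makes the identification $a(G/H)=a(M(G)/E(H))$ valid.
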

\begin{proof}
	The proof is completed by applying Lemma \ref{lem:seriala} for the graphic matroid associated with $G$.
\end{proof}
\begin{proposition}\label{prop:adde}
	Given a connected graph $G$ and a positive integer $k$, let $H$ be a subgraph of $G$ such that every connected component of $H$ is a vertex-induced subgraph of $G$. Assume that $a(G/H) \leq k$ and $a(H) \leq k$. If $E_k(H) = \{ e:=(v_1,v_2):v_1,v_2 \in V(H)\}$ and $E_k(G/H) = \{ e:=(v_1,v_2):v_1,v_2 \in V(G/H)\}$, then $E_k(G) = \{ e:=(v_1,v_2):v_1,v_2 \in V(G)\}$.
\end{proposition}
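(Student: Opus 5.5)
We argue directly from Lemma~\ref{lem:kforests}, treating the added edge as an edge of either $H$ or $G/H$ depending on where its endpoints lie. Fix an arbitrary pair $v_1,v_2\in V(G)$ and set $e=(v_1,v_2)$; we must show $a(G+e)\le k$. We split into two cases according to whether $v_1$ and $v_2$ lie in the same connected component of $H$.

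\emph{Case 1: $v_1,v_2$ lie in the same component $C$ of $H$.} Let $H':=H+e$. Each component of $H'$ is still a vertex-induced subgraph of $G+e$: the component $C+e$ is exactly the subgraph of $G+e$ induced on $V(C)$, since $C$ was induced in $G$ and $e$ joins two vertices of $V(C)$. Because $v_1,v_2\in V(H)$, the hypothesis $E_k(H)=\{(v_1,v_2):v_1,v_2\in V(H)\}$ gives $a(H')\le k$. Identifying the vertices of each component of $H'$ in $G+e$ turns $e$ into a self-loop, which is removed, so $(G+e)/H'=G/H$ and $a((G+e)/H')\le k$. Lemma~\ref{lem:kforests} applied to $G+e$ and $H'$ then yields $a(G+e)\le k$.

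\emph{Case 2: $v_1,v_2$ lie in different components of $H$, or at least one of them is not in $V(H)$.} Keep $H$ unchanged. It is still a subgraph of $G+e$ whose components are induced, because $e$ does not join two vertices of one component. Here $(G+e)/H=(G/H)+\bar e$, where $\bar e$ joins the images $\bar v_1\neq\bar v_2$ in $V(G/H)$. By the hypothesis on $E_k(G/H)$, we have $a((G/H)+\bar e)\le k$. Since $a(H)\le k$, Lemma~\ref{lem:kforests} again gives $a(G+e)\le k$.

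The main obstacle is bookkeeping rather than substance, and it has two parts. First, one must confirm that the induced-component requirement of Lemma~\ref{lem:kforests} survives adding $e$, which is the key point in Case~1. Second, one must handle multiedges: if $\bar e$ is parallel to an existing edge of $G/H$, the contraction must be read as a multigraph, matching the graphic matroid contraction underlying Lemma~\ref{lem:seriala}. Taking the hypothesis on $E_k(G/H)$ to range over multigraph additions resolves this. Finally, one should note that $a(G+e)\ge a(G)=k$ is automatic, so $a(G+e)=k$ and $e\in E_k(G)$.
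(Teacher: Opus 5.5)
Your proof is correct and follows the paper's argument: the same split on whether $v_1,v_2$ lie in a common component of $H$, using $H+e$ with $(G+e)/(H+e)=G/H$ in one case and keeping $H$ with $(G+e)/H=(G/H)+e$ in the other, each closed by Lemma~\ref{lem:kforests}. Your extra checks are details the paper leaves implicit: that the components stay vertex-induced after adding $e$, the multigraph reading of the contraction, and $a(G+e)\ge a(G)=k$ under the section's standing assumption.
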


\begin{proof}
	Let $ e=(v_1,v_2)$ for some $v_1,v_2 \in V(G)$. If $v_1,v_2$ do not belong to any common connected component of $H$, then $(G+e)/H = (G/H)+e$, this implies $a((G+e)/H)=a((G/H)+e)\leq k$. Note that $ a(H)\leq k$, by Lemma \ref{lem:kforests}, the graph $G+e$ satisfies $a(G+e)\leq k$.
	
	If $v_1,v_2$ belong to some common connected component of $H$, then $ a(H+e)\leq k$. In contrast, $(G+e)/(H+e) = G/H$, this implies $a((G+e)/(H+e))=a(G/H)\leq k$. By Lemma \ref{lem:kforests}, the graph $G+e$ satisfies $a(G+e)\leq k$.
\end{proof}

Now, we are ready to introduce the main theorem in this section.

\begin{theorem}
Let $G=(V,E)$ be a connected graph with $a(G) = k \geq 2$.
Let $E_k(G)$ be the set of edges $e = (v_1,v_2)$ that can be added to $G$ such that $a(G+e) =k$, where $v_1,v_2 \in V$. Let $D(G)$ be the fractional arboricity of $G$. Each of the following holds:
\bi
\item[(i)] If $D(G) < k$, then $E_k(G) = \{ e=(v_1,v_2):v_1,v_2 \in V\}$.

\item[(ii)] If $D(G) = k$ and $G$ is homogeneous, then $E_k(G) = \emptyset$.

\item[(iii)] If $D(G) = k$ and $G$ is not homogeneous, then $E_k(G)$ is the set of edges $e=(v_1,v_2)$ where $v_1,v_2$ do not belong to any common connected vertex-induced subgraph optimizing $D(G)$.
\ei
\end{theorem}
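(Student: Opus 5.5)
The plan is to work throughout with Theorem \ref{edmonds-SD}(ii) in the form $a(G+e)\le k \iff D(G+e)\le k$, i.e.\ $|X|\le k\,r(X)$ for every edge set $X$ of $G+e$. Since adding $e$ changes only sets containing $e$, and for those $|X\cup\{e\}| = |X|+1$ while $r(X\cup\{e\})\ge r(X)$, the only obstruction comes from sets $X\subseteq E$ with $e\in \cl_{G+e}(X)$, i.e.\ sets whose span already joins $v_1$ and $v_2$, and for which $|X|+1>k\,r(X)$. Taking closures (graphically: vertex-induced subgraphs on a set of vertices containing both endpoints) only increases $|X|$ with $r$ fixed, so we may restrict to closed connected sets.

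For (i), if $D(G)<k$ then for every $X$ with $r(X)>0$ we have $|X|<k\,r(X)$, hence $|X|\le k\,r(X)-1$ by integrality, so $|X|+1\le k\,r(X)$. Also the new singleton $\{e\}$ itself has density $1\le k$ (and if $v_1=v_2$ were allowed it would be a loop; I will take $v_1\ne v_2$ as implicit). Thus $D(G+e)\le k$ and every pair is addable. Alternatively, this can be phrased via Proposition \ref{prop:adde} with $H$ trivial, but the direct integrality argument is cleaner.

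For (ii), homogeneity gives $\theta(G)=D(G)=k$, i.e.\ $|E|=k\,r(E)$. Since $G$ is connected, $r(E)=|V|-1$ and adding any edge $e$ keeps the rank, so $|E+e|=k\,r(E+e)+1>k\,r(E+e)$, giving $D(G+e)>k$ and $a(G+e)>k$. Hence $E_k(G)=\emptyset$.

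For (iii), suppose $D(G)=k$ and $G$ is not homogeneous. By the preceding reduction, $e$ fails to be addable iff some $X$ with $e\in\cl(X)$ has $|X|=k\,r(X)$, i.e.\ $X$ is optimal for $D(G)$; passing to closure and to the connected component of $X$ containing both endpoints (density of a component is at most the max, and equality is forced by averaging the component densities, each $\le k$) yields a connected vertex-induced optimizing subgraph containing $v_1,v_2$. Conversely, such a subgraph $X$ gives $|X+e|=k\,r(X)+1$, so $e$ is not addable. This gives the stated description. The main obstacle I expect is the componentwise step: ensuring that when a union of components is optimal with ratio exactly $k$, each component containing the relevant endpoints is itself optimal. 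This follows since $|X|=\sum|X_j|\le k\sum r(X_j)=k\,r(X)$ with equality forcing equality termwise. Equivalently, one can use the core $A_1$ (the unique maximal optimal set) and note that the relevant optimizers are exactly the closed connected pieces of sub-optimizers of $A_1$; the non-homogeneity hypothesis only ensures that this set of addable edges is nonempty (e.g.\ edges across $E-A_1$), via Proposition \ref{prop:adde} applied with $H$ the subgraph spanned by $A_1$.
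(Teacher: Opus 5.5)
Your proof is correct, but it follows a genuinely different and more elementary route than the paper.

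The paper proves (i) by a minimal-counterexample induction on $r(G)$ that runs through the principal partition. It contracts the subgraph $H$ spanned by $E-E_{max}$, applies minimality to $G/H$ and to the components of $H$, and uses Proposition~\ref{prop:adde} to force the counterexample $G$ to be homogeneous. A second decomposition then forces $G+e_1$ to be homogeneous, which gives the contradiction. For (iii) the paper takes $H$ to be the core (the minimal-$\eta^*$ block $A_1$). It applies (ii) to each component of $H$ to show that edges inside a component are not addable, and applies (i) to $G/H$ together with Lemma~\ref{lem:kforests} to show that all other edges are addable.

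You instead use Theorem~\ref{edmonds-SD} plus integrality once. Since $a(G)=k$ gives $|X|\le k\,r(X)$ for all $X$, adding a non-loop $e$ only affects the sets $X\cup\{e\}$. Their size grows by one, and their rank also grows by one unless $X$ spans $e$. Hence $e$ is addable if and only if no tight set ($|X|=k\,r(X)$) spans $e$. From this single observation:
\begin{itemize}
\item (i) is the case where there are no tight sets;
\item (ii) is the case where $E$ itself is tight and spans every pair;
\item (iii) follows from your closure-and-components reduction.
\end{itemize}
That reduction also makes explicit something the paper leaves implicit: its proof characterizes $E_k(G)$ via components of the core, while the statement speaks of arbitrary connected vertex-induced optimizers of $D(G)$.

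Your route is shorter, self-contained, and does not need non-homogeneity in (iii). The paper's route buys a structural description of the obstruction as the components of the core, which ties $E_k(G)$ to the universal density.

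One small point: the aside about phrasing (i) via Proposition~\ref{prop:adde} with $H$ trivial is circular. With $H$ empty or $H=G$, that proposition's hypothesis is exactly the conclusion you want, so drop it; the integrality argument stands on its own.
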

\begin{proof}
For part (i), assume that (i) is false and  let $G$ be a counter example such that $r(G)$ is minimized. By the definition of $G$, $D(G)<k$, and there exist $v_1,v_2 \in V$ such that the edge $e_1 = (v_1,v_2)$ satisfies $a(G+e_1) \geq k+1$. 

We claim that $G$ is homogeneous. Assume $G$ is not homogeneous. Let $E_{max}$ be the set of elements that attain the maximal value of the universal base $\eta^*$ of the graphic matroid associated with $G$. Let $H$ be the nontrivial subgraph which is edge-induced by $E-E_{max}$. Then, every connected component of $H$ is a vertex-induced subgraph of $G$. By the serial rule for base modulus, the graph $G/H$ is homogeneous and  \[D(G/H) = \theta(G/H) = S(G/H) = S(G) \leq \theta(G) \leq D(G)<k.\]
By the minimization of $r(G)$, we have $D(G/H+e) \leq a(G/H+e) \leq k$ for any edge $e = (v_1,v_2)$ where $v_1,v_2 \in V(G/H)$.
Let $H_1$ be an arbitrary connected component of $H$.
By the construction of $H_1$ and the serial rule with $E_{max}$, we have that $D(H_1)=D(G)<k$. Then, by the optimality of $r(G)$, we have that $D(H_1+e) \leq a(H_1+e) \leq k$ for any edge $e = (v_1,v_2)$ where $v_1,v_2$ belong to $V(H_1)$. Furthermore, for any  edge $e = (v_1,v_2)$ where two vertices $v_1,v_2$ belong to different components of $H$, we have $a(H+e)=a(H)\leq k$.
Therefore, by Proposition \ref{prop:adde}, $E_k(G) = \{ e=(v_1,v_2):v_1,v_2 \in V\}$, which is a contradiction. 
Hence, $H$ must be $G$ and $G$ is homogeneous.

 Since $G$ is homogeneous, we have \[\theta(G)=D(G) = \frac{|E(G)|}{r(G))} <k.\] So, $|E(G)| <r(G)k$. Hence, $|E(G)| \leq r(G)k-1$.  Thus,  $|E(G)|+1 \leq kr(G)$. Therefore, \[|E(G+e_1)| \leq r(G)k = r(G+e_1)k \Rightarrow \theta(G+e_1) \leq k.\]
 
Next, we claim that $G+e_1$ is homogeneous. Assume that $G+e_1$ is not homogeneous. Let $K$ be the nontrivial subgraph which is edge-induced by the set of edges that do not attain the maximal value of the universal base of $G+e_1$. Then, every connected component of $K$ is a vertex-induced subgraph of $G+e_1$. It follows that $e_1 \in K$, because otherwise, $k+1 \leq D(G+e_1) = D((G+e_1)|K)=D(G|K) \leq D(G) <k$, contradiction. Furthermore, $(G+e_1)/K$ is homogeneous and \[D((G+e_1)/K) = \theta((G+e_1)/K) = S((G+e_1)/K) = S(G+e_1) \leq \theta(G+e_1) \leq k.\] 
Hence, $a((G+e_1)/K) \leq k$. On the other hand, we have $D(K-e_1) \leq D(G)<k$. By the optimality of $r(G)$, it follows that $a(K) \leq k$. By Lemma \ref{lem:kforests}, $a(G+e_1) \leq k$, contradiction. Therefore, $G+e_1$ must be homogeneous. 

Since $G+e_1$ is homogeneous, by $ \theta(G+e_1) \leq k$, we have  $a(G+e_1) \leq k$, contradiction. 

For (ii), since  $S(G)=\theta(G) = D(G) = k$, we have that $|E(G)| = r(G)k$. If we add an arbitrary edge $e = \{v_1,v_2\}$ to the graph $G$, then any $k$ forests of $G+e$ contains at most $r(G)k <r(G)k+1 = |E(G+e)| $. Hence, $a(G+e) \geq k+1$. Thus, $E_k(G) = \emptyset$.

For (iii), let $E_{max}$ be the set of elements that attain the maximal value of the universal base $\eta^*$ of $G$, and let $H$ be the nontrivial subgraphs with possibly more than one connected components which is edge-induced by the set of elements that attain minimal value of $\eta^*$. Each connected vertex-induced component $K$ of $H$ is optimal for $D(G)$. It follows that $D(K)=D(G)=k$. Let $e=(v_1,v_2)$ be an arbitrary edge where  $ v_1,v_2 \in V(K)$, by (ii), $a(K+e) \geq k+1$. Hence $a(G+e) \geq a(K+e) \geq k+1 $. It follows that $e \notin E_k(G)$.
Let $e=(v_1,v_2)$ be an arbitrary edge such that $v_1,v_2$ do not belong to any common connected vertex-induced component of $H$.  It follows that 
\[a((G+e)|H) = a(G|H) = D(G|H) = D(G)=k.\] On the other hand, we have $D(G/H)< D(G)=k$. By (i), $a((G+e)/H)= a(G/H+e)  \leq k$. Hence, by Lemma \ref{lem:kforests}, $a(G+e) \leq k$. The proof for (iii) is completed.

\end{proof}

\bibliographystyle{acm}
\bibliography{ppfinal}
\def\cprime{$'$}
\nocite{*}

\appendix

\section{Appendix: Matroids}\label{subsec:mat}
Let us begin by revisiting several definitions related to matroids. 
For a set $X$ we write $|X|$  for its cardinality, and if $Y$ is another set, then $X-Y$ is the relative complement of $Y$ in $X$.

\begin{definition}\label{def:independent-set} 	Let $E$ be a finite set, the set system $M(E,\cI)$ is a matroid if the  following axioms are satisfied:
	\bi
	\item[(I1)] $\emptyset \in \cI$.
	\item[(I2)] If $X \in \cI$ and $Y \subseteq X$ then $Y \in \cI$ ({\it Hereditary property}).
	\item[(I3)] If $X,Y \in \cI$ and $|X| > |Y|$, then there exists $x \in X - Y$ such that $Y \cup \left\{x	\right\} \in \cI$ ({\it Exchange property}).
	\ei
	Every set in $\cI$ is called an {\it independent set}.
\end{definition}
The maximal independent sets are called {\it bases},  the minimal dependent sets are called {\it circuits}. 
The {\it rank} function, $r : 2^E \rightarrow \mathbb{Z}_{+}$, defined on all subsets $X\subset E$ is given by:
\[r(X) := \max \left\{ |Y| : Y \subseteq X, Y \in \cI \right\}.\]

Next, we recall the definition of dual matroids. Given a matroid $M(E,\cI)$, the set
\[ \cB^* :=\{ E-B: B \in \cB \}\]
is the family of bases of the {\it dual matroid} $M^*$ on $E$. The {\it corank} function $r^*$ of $M$ is defined as the rank function of $M^*$, and for any subset $X \subseteq E$, we have:
\begin{equation}\label{eq:corank-rank}
	r^*(X) = |X| - r(M) + r(E-X).
\end{equation}

Let us also recall the operations of {\it deletion}, {\it restriction}, and {\it contraction} in matroids. For a matroid $M(E,\cI)$ and a subset $X \subseteq E$, the {\it matroid deletion} of $X$ from $M$ is denoted by $M \setminus X$ and its collection of independent sets is
\[ \cI(M \setminus X) := \{ Y \subseteq E -X : Y \in \cI(M)\}.\]
The {\it restriction} to $X$ in $M$ is denoted by $M|X$, and is the matroid on $X$ defined as $M|X := M \setminus (E - X)$. The base family of $M\setminus X$ is the collections of maximal sets in $\{  B - X : B \in \cB(M) \}$. Its rank function is $r_{M \setminus X}(Y) = r_{M}(Y)$ for all subsets $Y \subseteq E-X$.
Note that $r_M$ denotes the rank function of a matroid $M$. When we discuss a matroid $M$ and its deletion and contraction, $r$ is understood as the rank function of $M$.
The {\it matroid contraction} of $X$ in $M$ is denoted by $M/ X$ and its collection of independent sets is
\[\cI(M / X) = \lbr Y \subseteq E - X :\exists B \in \cB(M \setminus(E - X)) \text{ such that } Y \cup B \in \cI(M)\rbr.\] The base family of $M/ X$ is $\cB(M / X) = \lbr Y \subseteq E - X : \exists B \in \cB(M \setminus(E - X))\text{ such that }Y \cup B \in \cB(M) \rbr,$ and the rank function is $r_{M / X}(Y) = r_{M}(X \cup Y) - r_{M}(X)$ for all $Y \subseteq E - X$.
For simplicity, throughout this paper, we only consider loopless matroids with positive rank. Loopless means that $r(X)=0$ implies $X=\emptyset$ and positive rank means that $r(E)>0$.
We recall more properties in the following propositions.
\begin{proposition}\cite{mat}\label{prop:con-del}
	For a matroid \(M\) and disjoint \(X, Y \subseteq E(M)\), we have:
	\begin{enumerate}
		\item \((M \setminus X) \setminus Y = M \setminus (X \cup Y)\).
		\item \((M / X) / Y = M / (X \cup Y)\).
		\item \((M / X) \setminus Y = (M \setminus Y) / X\).
	\end{enumerate}
\end{proposition}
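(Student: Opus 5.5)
The plan is to compare rank functions instead of independent sets: a matroid is determined by its ground set and its rank function, since $I$ is independent exactly when $r(I)=|I|$. The appendix already records $r_{M\setminus X}(Z)=r_M(Z)$ for $Z\subseteq E-X$ and $r_{M/X}(Z)=r_M(X\cup Z)-r_M(X)$ for $Z\subseteq E-X$. So each of the three identities should reduce to checking that both sides have the same ground set, namely $E-(X\cup Y)$, and the same rank function on it.

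First I would justify the contraction rank formula from the stated definition of $\cI(M/X)$. That definition quantifies over a base $B$ of $M|X$, so I must show the choice of $B$ is irrelevant. Fix any base $B$ of $M|X$, so $|B|=r(X)$. For $Z\subseteq E-X$, the set $B\cup Z$ spans $X\cup Z$, because $B$ spans $X$.
\begin{itemize}
\item If $Z\cup B\in\cI(M)$, then $r(X\cup Z)\ge |Z|+r(X)$. Submodularity gives the reverse inequality $r(X\cup Z)\le r(X)+|Z|$, so equality holds.
\item Conversely, if $r(X\cup Z)=r(X)+|Z|$, then $B\cup Z$ spans a set of rank $|B|+|Z|$, so $B\cup Z$ is independent.
\end{itemize}
Hence $Z\in\cI(M/X)$ if and only if $r(X\cup Z)-r(X)=|Z|$, and this condition does not depend on $B$. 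Taking maxima over subsets then yields $r_{M/X}(Z)=r(X\cup Z)-r(X)$. I expect this well-definedness step to be the only point needing care; everything after it is bookkeeping.

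With the rank formulas in hand, I would verify each identity in turn; all three are direct substitutions.
\begin{itemize}
\item For (1), both sides live on $E-(X\cup Y)$ and have rank function $r_M$.
\item For (2), for $Z\subseteq E-(X\cup Y)$,
\[
r_{(M/X)/Y}(Z)=\bigl(r(X\cup Y\cup Z)-r(X)\bigr)-\bigl(r(X\cup Y)-r(X)\bigr)=r(X\cup Y\cup Z)-r(X\cup Y),
\]
which is $r_{M/(X\cup Y)}(Z)$.
\item For (3), for $Z\subseteq E-(X\cup Y)$, both $r_{(M/X)\setminus Y}(Z)$ and $r_{(M\setminus Y)/X}(Z)$ equal $r(X\cup Z)-r(X)$. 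In $M\setminus Y$ the rank of $X\cup Z$ and of $X$ is still computed by $r_M$, since neither set meets $Y$.
\end{itemize}
Equal ground sets and equal rank functions then give equality of the matroids in all three cases.
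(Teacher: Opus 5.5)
Your proof is correct. The paper gives no argument of its own here, since the proposition is simply quoted from \cite{mat}, and comparing ground sets and rank functions via $r_{M\setminus X}(Z)=r(Z)$ and $r_{M/X}(Z)=r(X\cup Z)-r(X)$ is the standard way this result is proved.

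The one step you compress is the lower bound hidden in ``taking maxima''. To fill it, extend a base $B$ of $M|X$ to a maximal independent subset $B'$ of $X\cup Z$. Then $B'-B\subseteq Z$ lies in $\cI(M/X)$ and has size $r(X\cup Z)-r(X)$.
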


\section{Appendix: Discrete Modulus}\label{subsec:mod}

Let $E$ be a finite set with given weights 
$\sigma \in \mathbb{R}^E_{>0}$ assigned to 
elements in $E$. We say that $\Gamma$ is a family 
of objects in $E$ if, for each object $\gamma \in \Gamma$, 
there is an associated function  
$\mathcal{N}(\gamma, \cdot)^T: E \to \mathbb{R}_{\geq 0}$, 
which we interpret as a {\it usage vector} in 
$\mathbb{R}^E_{\geq 0}$. In other words, $\Gamma$ is 
associated with a $|\Gamma| \times |E|$ 
{\it usage matrix} $\mathcal{N}$. 
We assume that $\Gamma$ is nonempty and 
that each object $\gamma \in \Gamma$ uses at least one 
element in $E$ with a positive and finite amount.

A {\it density} $\rho$ is a vector in 
$\mathbb{R}^E_{\geq 0}$. For each $\gamma \in \Gamma$, 
we define the {\it total usage cost} 
$\ell_{\rho}(\gamma)$ of $\gamma$ with respect to $\rho$ as
\[
\ell_{\rho}(\gamma) = \sum\limits_{e \in E} 
\mathcal{N}(\gamma, e) \rho(e).
\]
A density $\rho \in \R^E_{\geq 0 }$ is  called {\it admissible} for $\Ga$, if for all $\ga\in \Ga$,
$ \ell_{\rho}(\ga) \geq 1 .$
The {\it admissible set} $\Adm(\Ga)$ of $\Ga$ is defined as the set of all admissible densities for $\Ga$,
\begin{equation}\label{eq:adm-set}
	\Adm(\Ga) := \left\{ \rho \in \R^E_{\geq 0 }: \cN\rho \geq \one \right\}. 
\end{equation} 
Fix $1 \leq p < \infty$, the {\it energy} of the density  $\rho$ is defined as \[\cE_{p,\si}(\rho):=\sum\limits_{e \in E}\si(e)\rho(e)^p.\]
When $p =\infty $,
\[\cE_{\infty,\si}(\rho):= \max\limits_{e\in E} \left\{ \si (e)\rho(e)\right\}.\]
\begin{definition}\label{def:mod}
	The {\it $p$-modulus} of $\Gamma$ is
	\begin{equation}\label{modp}
		\Mod_{p,\sigma}(\Gamma):= 
		\inf\limits_{\rho \in \Adm(\Gamma)} 
		\mathcal{E}_{p,\sigma}(\rho).
	\end{equation}
\end{definition}

When $\si$ is the vector of all ones, we omit $\si$ and write $\cE_{p}(\rho) := \cE_{p,\si}(\rho)$ and $\Mod_{p}(\Ga) :=\Mod_{p,\si}(\Ga)$.
We will routinely identify $\Ga$ with the set of its usage vectors $\left\{ \cN(\ga,\cdot )^T: \ga \in \Ga \right\}$ in $\R^E_{\geq 0}$, hence we can write $\Ga \subset \R^E_{\geq 0 }$. The {\it dominant} of $\Ga$ is defined as
\[ \Dom(\Ga):= \co(\Ga) + \R^E_{\geq 0},\]
where $\co(\Ga)$ denotes the convex hull of $\Ga$ in $\R^E$. We recall Fulkerson duality for modulus.
\begin{definition}\label{def:blocker} Let $\Ga$ be a family of objects on a finite ground set $E$.
	The {\it Fulkerson blocker family} $\widehat{\Ga}$ of $\Ga$ is defined as the set of all the extreme points of $\Adm(\Ga)$:
	\[ \widehat{\Ga} := \Ext\left(\Adm(\Ga)\right) \subset  \R^E_{\geq 0}.\]
\end{definition}
Fulkerson blocker duality \cite{fulkersonblocking} states that
\begin{equation}\label{eq:dom-adm-block-hat}
	\Dom(\widehat{\Ga})= \Adm(\Ga) 
\end{equation}
\begin{equation}\label{eq:dom-adm-block}
	\Dom(\Ga)= \Adm(\widehat{\Ga}) 
\end{equation}
Moreover,  $\widehat{\Ga}$  has its own Fulkerson blocker family, and $\widehat{\widehat{\Ga}}  \subset \Ga$.

When  $1<p < \infty$, let $q:=p/(p-1)$ be the Hölder conjugate exponent of $p$. For any set of weights $\si \in \R^E_{>0}$,  define the dual set of weights $ \widehat{\si}$ as $ \widehat{\si}(e):=\si(e)^{-\frac{q}{p}}$ for all $e\in E$. Let $\Gahat$ be the Fulkerson blocker family of $\Ga$. Fulkerson duality for modulus  \cite[Theorem 3.7]{pietroblocking} states that
\begin{equation}
	\Mod_{p,\si}(\Ga)^{\frac{1}{p}}\Mod_{q, \widehat{\si}}( \widehat{\Ga})^{\frac{1}{q}}=1.
\end{equation}
Moreover, the optimal $\rho^*$ of $\Mod_{p,\si}(\Ga) $ and the optimal $\eta^*$ of $\Mod_{q, \widehat{\si}}( \widehat{\Ga})$ always exist, are unique, and are related as follows:
\begin{equation}\label{eq:weighted-eta-rho}
	\eta^{\ast}(e) = \frac{\si(e)\rho^{\ast}(e)^{p-1}}{\Mod_{p,\si}(\Ga)}, \quad \forall e\in E.
\end{equation}
When $p=1$, we have
\begin{equation}\label{eq:dual-infty}
	\Mod_{1,\si}(\Ga)\Mod_{\infty, \si^{-1}}( \widehat{\Ga})=1.
\end{equation}
Also when $p=2$, we have
\begin{equation}\label{eq:mod2}
	\Mod_{2,\si}(\Ga)\Mod_{2,\si^{-1}}( \widehat{\Ga})=1 \qquad\text{and}\qquad   \displaystyle \eta^{\ast}(e) = \frac{\si(e)}{\Mod_{2,\si}(\Ga)}\rho^{\ast}(e) \quad \forall  e\in E.
\end{equation}
Next, note that \begin{equation}\label{eq:adm-dom}
	\Adm(\Gahat) = \Dom(\Ga) = \co(\Ga) + \R^E_{\geq 0}.
\end{equation}
Hence, we can express

\begin{equation}\label{eq:modmeo}
	\Mod_{2,\sigma}(\Ga)^{-1} = \min\limits_{\eta \in \co(\Ga)} \sum\limits_{e \in E} \si^{-1}(e)\eta^2(e),
\end{equation}
Let $\eta^*$  be the unique optimal density of $\Mod_{2,\si^{-1}}( \widehat{\Ga})$ (which is equivalent to the right-hand side of (\ref{eq:modmeo}).  Let $\cP(\Ga)$ be the set of all probability mass functions in $\R^\Ga_{\geq 0}$ associated with $\Ga$. Any pmf $\mu\in \cP(\Ga)$ such that $\cN^T\mu=\eta^*$ is call an  $\eta^*$-induced pmf.

\section{Appendix: Serial rule for the $\MKL$ problem}\label{sec:serial}

Let us recall some notations in a general setting. Let $E$ be a finite ground set, and let $\Ga$ be a collection of subsets of $E$. Let $\mathcal{N}$ be the matrix whose rows are the indicator vectors of subsets in $\Ga$.
Given a set $A \subset E$, let $\psi_A$ be the restriction operator defined as
\begin{align*}
	\psi_A: 2^E  & \rightarrow 2^A\\
	\ga \subseteq E & \mapsto \ga \cap A.
\end{align*}
Then, for each set $A \subset E$, $\psi_A$ induces a family of objects $ \psi_{A}(\Ga) = \lbrace \ga\cap A: \ga \in \Ga \rbrace.$
Let $\left\{ E_1,E_2 \right\} $ be a partition of the edge set $E$. For each $i = 1, 2$, we define an induced family of objects $\Ga_i :=  \psi_{E_i}(\Ga)$.
Next, we say that a partition $\left\{ E_1,E_2 \right\} $ of the edge set $E$ {\it divides $\Ga$}, if 
$\Ga$ coincides with the {\it concatenation}
\begin{equation}\label{eq:concatenation}
	\Ga_1 \oplus \Ga_2:= \left\{ \ga_1 \cup \ga_2 : \ga_i \in  \Ga_i, i =1,2 \right\}.
\end{equation}
Given a partition $\left\{ E_1,E_2\right\}$ that divides $\Ga$ and a pmf $\mu \in \cP(\Ga)$. For each $i = 1, 2$, we define the {\it marginal} $\mu_i \in \cP(\Ga_i)$ as follows,
\[\mu_{i}(\zeta) :=  \sum \left\{   \mu(\zeta) : \ga \in \Ga, \psi_i(\ga)= \zeta \right\} \quad \forall \zeta \in \Ga_i.\]
On the other hand, given measures $\nu_i \in \cP ( \Ga_i)$ for $i =1,2$, we define their {\it product measure} in $\cP(\Ga)$ as follows, 
\begin{equation}\label{eq:oplus}
	\left( \nu_1  \oplus \nu_2  \right) (\ga)  := \nu_1(\zeta_1) \nu_2(\zeta_2),
\end{equation}
for all $ \displaystyle \ga = \zeta_1 \cup \zeta_2$ where $ \zeta_i \in \Ga_i$, $i =1,2.$ 

With these notations, the  $\MKL_{\si}$ problem defined on the family $\Ga$ with weights $\si \in \R^E_{>0}$ splits into two smaller subproblems as follows.
\begin{theorem}\label{thm:serialMKL}
	Let $\Ga$ be a family of subsets of the ground set $E$ with weights $\si \in \R^E_{>0}$. Let $ E = E_1 \cup E_2$ be a partition that divides $\Ga$. Let $\Ga_1$ and $\Ga_2$ be the family induced by the restriction operators $\psi_{E_1}$ and  $\psi_{E_2}$. Then:
	
	\bi
	\item[(i)] We have \[ \MKL_{\si}(\Ga) = \MKL_{\si}(\Ga_1) +\MKL_{\si}(\Ga_2);\]
	\item[ (ii)] A pmf $\mu \in \cP(\Ga)$  is optimal for $\MKL_{\si}(\Ga)$ if and only if its marginal pmfs $\mu_i \in \cP(\Ga_i), i=1,2$, are optimal for 
	$\MKL_{\si}(\Ga_i)$ respectively;
	\item[ (iii)] Conversely, given  pmfs $\nu_i \in \cP(\Ga_i)$ that are optimal for $\MKL_{\si}(\Ga_i)$  for $i=1,2$, then $\nu_1 \oplus \nu_2 $ is an optimal pmf in $\cP(\Ga) $ for $\MKL_{\si}(\Ga)$; 
	\item[ (iv)] For any pmf $\mu$ with marginals $\mu_i$, if $ e\in E_i$, $i =1,2$, then
	\[ \sum\limits_{\ga\in \Ga:e\in \ga} \mu(\ga)= \sum\limits_{\ga_i\in \Ga_i:e\in \ga_i} \mu(\ga_i).\]
	\ei
\end{theorem}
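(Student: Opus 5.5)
The statement is Theorem~\ref{thm:serialMKL}, the serial rule for the $\MKL_\si$ problem. I will prove it directly from the product structure $\Ga=\Ga_1\oplus\Ga_2$ and the additive splitting of the objective, rather than appealing to modulus.

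\emph{Part (iv).} This is a bookkeeping identity. For $e\in E_1$ we have $e\in\ga$ if and only if $e\in\psi_{E_1}(\ga)$. Grouping $\sum_{\ga\ni e}\mu(\ga)$ according to the value $\zeta=\psi_{E_1}(\ga)$ gives $\sum_{\zeta\in\Ga_1,\,e\in\zeta}\mu_1(\zeta)$ by the definition of the marginal, and the case $e\in E_2$ is symmetric. Consequently, for any feasible pair $(\eta,\mu)$ with $\eta=\cN^T\mu$, the restriction $\eta|_{E_i}$ is exactly the usage vector of $\mu_i$ for $\Ga_i$. Hence
\[
-\sum_{e\in E}\si(e)\log\eta(e)=\Bigl(-\sum_{e\in E_1}\si(e)\log\eta_1(e)\Bigr)+\Bigl(-\sum_{e\in E_2}\si(e)\log\eta_2(e)\Bigr),
\]
and $\eta>0$ holds exactly when both $\eta_1>0$ and $\eta_2>0$.

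\emph{Part (i).} Taking marginals maps feasible points of $\MKL_\si(\Ga)$ to feasible points of the two subproblems with the same total objective, which gives $\MKL_\si(\Ga)\ge\MKL_\si(\Ga_1)+\MKL_\si(\Ga_2)$. For the reverse inequality, take feasible $\nu_i\in\cP(\Ga_i)$ and form $\nu_1\oplus\nu_2$. This is a pmf on $\Ga$ because $\{E_1,E_2\}$ divides $\Ga$: every pair $(\zeta_1,\zeta_2)$ yields an element $\zeta_1\cup\zeta_2\in\Ga$, and this decomposition is unique since $E_1\cap E_2=\emptyset$. Its marginals are $\nu_1$ and $\nu_2$, because summing the product over the other factor gives $1$. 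So by (iv) its objective value is the sum of the two subproblem values. Taking infima over $\nu_1,\nu_2$ gives $\le$, and equality follows. Minimizers exist by the compactness remark after \eqref{eq:KL-beta}.

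\emph{Parts (ii) and (iii).} For (ii), the objective of $\mu$ equals $f_1(\mu_1)+f_2(\mu_2)$, and each $f_i(\mu_i)\ge\MKL_\si(\Ga_i)$. Combined with (i), $\mu$ is optimal if and only if both inequalities are tight, that is, if and only if each $\mu_i$ is optimal. Part (iii) then follows from (ii), since the marginals of $\nu_1\oplus\nu_2$ are $\nu_1$ and $\nu_2$.

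The main obstacle I expect is making sure the product measure is well defined on $\Ga$. One must check that $\Ga$ is identified with its set of usage vectors (subsets), so that distinct pairs $(\zeta_1,\zeta_2)$ give distinct elements of $\Ga$, and that the hypothesis that the partition divides $\Ga$ ensures every such union lies in $\Ga$. Once this is verified, everything else is linear bookkeeping.
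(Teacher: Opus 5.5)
Your proposal is correct and follows essentially the same route as the paper: use the marginal identity (iv) to split the objective, get $\MKL_\si(\Ga)\ge\MKL_\si(\Ga_1)+\MKL_\si(\Ga_2)$ from the marginals of a feasible (optimal) pmf and the reverse inequality from the product measure $\nu_1\oplus\nu_2$, then obtain (ii) and (iii) from the tightness of the two nonnegative gaps. Your explicit check that the decomposition $\ga=(\ga\cap E_1)\cup(\ga\cap E_2)$ is unique, so that $\nu_1\oplus\nu_2$ is a well-defined pmf on $\Ga$, is a detail the paper leaves implicit.
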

\begin{proof}
	Let $\mu$ be a pmf in $\cP(\Ga)$ and let $\eta(e) = \sum\limits_{\ga\in \Ga:e\in \ga}\mu(\ga)$. We denote $\underline{\ga}$ a random subset in $\Ga$ distributed by $\mu$.
	Since the partition $\{ E_1,E_2\}$ divides $\Gamma$, we see that for any 
	$\gamma, \gamma' \in \Gamma$, there exist $\gamma_i, \gamma'_i \in \Gamma_i$ 
	for $i = 1, 2$, such that 
	\[
	\gamma = \gamma_1 \cup \gamma_2 \quad \text{and} \quad 
	\gamma' = \gamma'_1 \cup \gamma'_2.
	\]
	For $e \in E_1$,
	\begin{align}\label{eq:bP}
		\bP_{\mu}(e \in \underline{\ga}) &= \sum\limits_{\ga \in \Ga}\cN(\ga,e)\mu(\ga) = 
		\sum_{\gamma_1, \gamma_2} \cN(\ga_1,e)\mu(\gamma_1 \cup \gamma_2) 
		& = 	\sum_{\gamma_1} \cN(\ga_1,e)\mu_1(\gamma_1) = \bP_{\mu_1}(e \in \underline{\ga_1}).
	\end{align}
	The case $e \in E_2$ yields a similar expression with $\mu_1$ replaced by the second marginal $\mu_2$.
	This implies that, if $\mu \in P(\Gamma)$, and $\mu_i$, $i = 1, 2$, are its marginals. Then 
	\[-\sum_{e\in E}\si(e)\log	\bP_{\mu}(e \in \underline{\ga}) = -\sum_{e\in E_1}\si(e)\log	\bP_{\mu_1}(e \in \underline{\ga_1})  -\sum_{e\in E_2}\si(e)\log	\bP_{\mu_2}(e \in \underline{\ga_2}).\]
	If we choose $\mu$ to be an optimal pmf for $\MKL_{\si}(\Gamma)$, we get that
	\begin{align}
		\MKL_{\si}(\Gamma_1) + \MKL_{\si}(\Gamma_2) 
		&\leq -\sum_{e\in E_1}\si(e)\log	\bP_{\mu_1}(e \in \underline{\ga_1}) + -\sum_{e\in E_2}\si(e)\log	\bP_{\mu_2}(e \in \underline{\ga_2}) \\
		&=-\sum_{e\in E}\si(e)\log	\bP_{\mu}(e \in \underline{\ga}) = \MKL_{\si}(\Gamma).
	\end{align}
	
	Conversely, suppose $\nu_i \in P(\Gamma_i)$, $i = 1, 2$, are optimal for their respective $\MKL$ problems. 
	Let $\nu := \nu_1 \oplus \nu_2$ be defined as in (\ref{eq:oplus}). 

	Since the partition $\{E_1, E_2\}$ divides $\Gamma$, this implies that the support set of $\nu$ is contained in $\Gamma$. 
	Moreover, 
	\[
	\sum_{\gamma_1 \in \Gamma_1} \sum_{\gamma_2 \in \Gamma_2} \nu(\gamma_1 \cup \gamma_2) 
	= \sum_{\gamma_1 \in \Gamma_1} \sum_{\gamma_2 \in \Gamma_2} \nu_1(\gamma_1) \nu_2(\gamma_2) 
	= \left( \sum_{\gamma_1 \in \Gamma_1} \nu_1(\gamma_1) \right) 
	\left( \sum_{\gamma_2 \in \Gamma_2} \nu_2(\gamma_2) \right) 
	= 1.
	\]
	Thus, $\nu$ is a pmf in $P(\Gamma)$. Furthermore, 
	\[
	\sum_{\gamma_2 \in \Gamma_2} \nu(\gamma_1 \cup \gamma_2) 
	= \nu_1(\gamma_1) \sum_{\gamma_2 \in \Gamma_2} \nu_2(\gamma_2) 
	= \nu_1(\gamma_1),
	\]
	which shows that $\nu_1$ is a marginal of $\nu$. A similar argument applies to $\nu_2$. Then,  we have 
	\begin{align}
		\MKL_{\si}(\Gamma) &\leq -\sum_{e\in E}\si(e)\log	\bP_{\nu}(e \in \underline{\ga})\\
		&=  -\sum_{e\in E_1}\si(e)\log	\bP_{\nu_1}(e \in \underline{\ga_1}) + -\sum_{e\in E_2}\si(e)\log	\bP_{\nu_2}(e \in \underline{\ga_2})\\
		&= \MKL_{\si}(\Gamma_1) + \MKL_{\si}(\Gamma_2) .
	\end{align}
	This proves parts 1 and 3.
	
	For part 2, assume $\mu \in P(\Gamma)$ is optimal. By part 1, we obtain 
	\[
	0 = 
	\left(-\sum_{e\in E_1}\si(e)\log	\bP_{\mu_1}(e \in \underline{\ga_1}) - \MKL_{\si}(\Gamma_1) \right) 
	+ 
	\left( -\sum_{e\in E_2}\si(e)\log	\bP_{\mu_2}(e \in \underline{\ga_2}) - \MKL_{\si}(\Gamma_2) \right).
	\]
	Since both terms in parentheses are non-negative, they must each equal zero. 
	This implies that the marginals of $\mu$ are also optimal for their respective $\mathrm{MKL}$ problems. 
	
	Conversely, suppose that $\mu$ is a pmf on $\Gamma$ such that both of its marginals $\mu_i$ are optimal for their respective $\mathrm{MKL}$ problems. 
	Then, it follows that $\mu$ is optimal for $\mathrm{MKL}_{\si}(\Ga)$. This completes the proof of part 2.
	
	Part 4 is obtained from equation (\ref{eq:bP}).
\end{proof}
Next, even when a partition does not divide the family, the serial rule still provides a bound.
\begin{corollary}
	Let $\Gamma \subset 2^E$ be a family of objects on a ground set $E$, let $E = E_1 \cup E_2$ be a partition of $E$. Let $\Ga_1$ and $\Ga_2$ be the family induced by the restriction operators $\psi_{E_1}$ and  $\psi_{E_2}$. 
	Then 
	\[
	\MKL_{\si}(\Gamma) \geq \MKL_{\si}(\Gamma_1) + \MKL_{\si}(\Gamma_2),\]
	with equality if the partition divides $\Gamma$.
\end{corollary}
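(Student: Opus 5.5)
The corollary can be obtained from the variational argument in the proof of Theorem~\ref{thm:serialMKL}; only the first half of that argument uses the partition, and it does not need the dividing property. Let $\mu\in\cP(\Ga)$ be a pmf with $\eta=\cN^T\mu>0$ that is optimal for $\MKL_\si(\Ga)$ (a minimizer exists by the compactness remark following (\ref{eq:KL-beta})). For $i=1,2$ define the pushforward $\mu_i\in\cP(\Ga_i)$ by $\mu_i(\zeta):=\sum\{\mu(\ga):\ga\in\Ga,\ \psi_{E_i}(\ga)=\zeta\}$. This pushforward is well defined for any map $\psi_{E_i}:\Ga\to\Ga_i$. The total mass is preserved, so each $\mu_i$ is a pmf.

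The key step is the marginal identity (\ref{eq:bP}). For $e\in E_1$ we have $e\in\ga$ if and only if $e\in\psi_{E_1}(\ga)$. Grouping the sum $\sum_{\ga\ni e}\mu(\ga)$ according to the value of $\psi_{E_1}(\ga)$ therefore gives
\[
\sum_{\ga\in\Ga:e\in\ga}\mu(\ga)=\sum_{\zeta\in\Ga_1:e\in\zeta}\mu_1(\zeta),
\]
and the same holds for $e\in E_2$ with $\mu_2$. This computation never uses $\Ga=\Ga_1\oplus\Ga_2$. It follows that $\eta|_{E_i}=\cN_i^T\mu_i$ is positive, so $\mu_i$ is feasible for $\MKL_\si(\Ga_i)$.

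Splitting the objective over $E_1\cup E_2$ then gives
\[
\MKL_\si(\Ga)=-\sum_{e\in E_1}\si(e)\log\eta(e)-\sum_{e\in E_2}\si(e)\log\eta(e)\ \ge\ \MKL_\si(\Ga_1)+\MKL_\si(\Ga_2),
\]
because each summand is the $\MKL$ objective evaluated at a feasible pmf of the corresponding subfamily.

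The equality clause is exactly Theorem~\ref{thm:serialMKL}(i), so when the partition divides $\Ga$ we simply cite it. The only point that needs care is the feasibility of the subproblems, namely that every $e\in E_i$ is used by some $\zeta\in\Ga_i$. This follows because any $e$ used by some $\ga\in\Ga$ is also used by $\psi_{E_i}(\ga)$. If some element were used by no object at all, both sides would be $+\infty$ and the inequality would hold trivially.
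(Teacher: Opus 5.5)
Your proof is correct, but it takes a different route from the paper's.

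The paper never reopens the proof of Theorem~\ref{thm:serialMKL}. It observes that $\Ga\subseteq\Ga_1\oplus\Ga_2$ always holds, since $\ga=(\ga\cap E_1)\cup(\ga\cap E_2)$. Hence $\co(\Ga)\subseteq\co(\Ga_1\oplus\Ga_2)$, and enlarging the feasible set gives $\MKL_\si(\Ga)\ge\MKL_\si(\Ga_1\oplus\Ga_2)$. Because $\{E_1,E_2\}$ divides the concatenation, and its restrictions are again $\Ga_1,\Ga_2$, Theorem~\ref{thm:serialMKL}(i) applied to $\Ga_1\oplus\Ga_2$ then yields $\MKL_\si(\Ga_1)+\MKL_\si(\Ga_2)$.

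You instead push an optimal pmf of $\Ga$ forward to $\Ga_1$ and $\Ga_2$, and verify directly that the marginal identity (\ref{eq:bP}) holds without the dividing hypothesis. In effect, you note that the inequality half of the proof of Theorem~\ref{thm:serialMKL}(i) works verbatim for an arbitrary partition.

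Each route has its advantage. The paper's argument uses the theorem as a black box plus a one-line monotonicity principle, which isolates exactly where dividing matters: only for the reverse inequality. Yours is self-contained and gives slightly more information. It produces explicit feasible points for the subproblems whose objectives sum to $\MKL_\si(\Ga)$, so equality holds precisely when the pushforwards of an optimal pmf are optimal for $\MKL_\si(\Ga_1)$ and $\MKL_\si(\Ga_2)$.

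Your treatment of the degenerate case where some element is unused is fine: all values lie in $[0,+\infty]$ because $\eta\le 1$. You also do not actually need a minimizer; applying your bound to every feasible $\mu$ and taking the infimum suffices.
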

\begin{proof}
	This follows from Theorem \ref{thm:serialMKL} and the following monotonicity property of the $\MKL$ problem. 
	Given an arbitrary partition of $E$, it always holds that $\Gamma \subseteq \Gamma_1 \oplus \Gamma_2$. 
	This implies that $\co(\Gamma) \subseteq \co(\Gamma_1 \oplus \Gamma_2)$, and therefore,
	\[
	\MKL_{\si}(\Gamma) \geq \MKL_{\si}(\Gamma_1 \oplus \Gamma_2).
	\]
\end{proof}
\begin{remark}
	The serial rule in Theorem \ref{thm:serialMKL} can be generalized to partitions with a larger number of parts.
	
\end{remark}
\begin{remark}
	Theorem \ref{thm:serialMKL} can be applied to the direct sum of two matroids where the direct sum of two matroids \( M_1 = (E_1, \cI_1) \) and \( M_2 = (E_2, \cI_2) \) with disjoint ground sets is the matroid
	\begin{equation}\label{eq:directsum}
		M_1 \oplus M_2 := (E_1 \cup E_2, \{ I_1 \cup I_2 : I_1 \in \cI_1, I_2 \in \cI_2 \}).
	\end{equation}
	Hence, the theorem can also be applied to a matroid with at least two connectivity components since it is equal to the direct sum of its connectivity components.
\end{remark}

\end{document}